\documentclass[11pt, reqno]{amsart}
\usepackage{hyperref}
\hypersetup{
	colorlinks=true,
	linkcolor=blue,
	filecolor=blue,
	urlcolor=red,
	citecolor=green,}

\setlength{\textwidth}{6.2in} \setlength{\oddsidemargin}{0.2in}
\setlength{\evensidemargin}{0.2in}\setlength{\footskip}{0.3in}
\usepackage{CJK}
\usepackage{caption}
\newtheorem{thm}{Theorem}[section]
\newtheorem{cor}[thm]{Corollary}
\newtheorem{lem}[thm]{Lemma}
\newtheorem{prop}[thm]{Proposition}

\newtheorem{rem}{Remark}
\newtheorem*{thm*}{Theorem}

\theoremstyle{definition}
\newtheorem{defn}[thm]{Definition}
\theoremstyle{remark}
\usepackage{tikz}
\usetikzlibrary {patterns}


\numberwithin{equation}{section}



\usepackage{subfigure}
\usepackage{bm}

\newcommand{\thmref}[1]{Theorem~\ref{#1}}

\newcommand{\lemref}[1]{Lemma~\ref{#1}}

\newcommand{\propref}[1]{Proposition~\ref{#1}}
\newcommand{\corref}[1]{Corollory~\ref{#1}}



\newcommand{\bR}{{\mathbb R}}


\newcommand{\mL}{{\mathcal L}}




\newcommand{\ric}{{\rm Ric}}

\newcommand{\di}{{\rm div}}

\newcommand{\la}{\bm{\langle}}
\newcommand{\ra}{\bm{\rangle}}

\newcommand{\vol}{{\rm{vol}}}

\begin{document}
	
\title[On the universal local and global properties of positive solutions]{On the universal local and global properties of positive solutions to $\Delta_pv+b|\nabla v|^q+cv^r=0$ on complete Riemannian manifolds}

\author{Jie He}
\address{School of Mathematics and Physics, Beijing University of Chemical Technology,  Chaoyang District, Beijing 100029, China}
\email{hejie@amss.ac.cn}
\author{Youde Wang}
\address{1. School of Mathematics and Information Sciences, Guangzhou University; 2. Hua Loo-Keng Key Laboratory
		Mathematics, Institute of Mathematics, Academy of Mathematics and Systems Science, Chinese Academy
		of Sciences, Beijing 100190, China; 3. School of Mathematical Sciences, University of Chinese Academy of Sciences,
		Beijing 100049, China.}
	\email{wyd@math.ac.cn}
	
	
	
	
\keywords{non-linear elliptic equation, gradient estimate, $p$-Laplace}
	
\begin{abstract}
In this paper we study the positive solutions to a nonlinear elliptic equation $$\Delta_pv+b|\nabla v|^q+cv^r =0$$ defined on a complete Riemannian manifold $(M,g)$ with Ricci curvature bounded from below, where $p>1$, $q,\, r, \, b$ and $c$ are some real constants. If $p>1$ is given and $bc\geq 0$, we provide a new routine to give some regions of $(q, r)$ such that the Cheng-Yau's logarithmic gradient estimates hold true exactly on such given regions. In particular, we derive the upper bounds of the constants $c(n, p, q, r)$ in the Cheng-Yau's gradient estimates for the entire solutions to the above equation. As applications, we reveal some universal local and global properties of positive solutions to the equation. On the other hand, we extend some results due to \cite{MR1879326} to the case the domain of the equation is a complete manifold and obtain wider ranges of $(q,r)$ for Liouville properties.
\end{abstract}
\maketitle

\textbf{\tableofcontents}
	
\section{Introduction}
Liouville property states that entire solutions to an elliptic equation must be constants under certain boundedness conditions. The earliest Liouville theorem can be traced back to Cauchy. In 1844, Cauchy proved that any bounded entire holomorphic function must be a constant. The Cauchy's theorem was extended at once to real harmonic functions on $\bR^n$ which are bounded only on one side.
The Liouville theorem has had a deep impact across many fields, such as complex analysis, PDE and differential geometry (see \cite{CM}).

In the past half century, one trend about the study of Liouville theorem is to study the Liouville property of semilinear and quasi-linear elliptic equations defined on Riemannian manifolds (see for example\cite{MR385749, MR1491451, MR1472893, MR2518892, Li-Z, MR2880214, MR4559367, MR431040}). Up to now, it is well-known that the curvature of a Riemannian manifold $(M, g)$ could affects some global properties of solution to partial differential equations on $M$ and global quantities like the eigenvalues of the Laplacian with respect to $g$ on $M$. So, the situation is more complicated than the case $M\equiv\mathbb{R}^n$. For instance, the so-called Cheng-Yau type gradient estimate, a more profound property of solutions to elliptic equations on manifolds, is of obvious geometric significance and plays important role on the local and global properties of solutions to a linear or nonlinear elliptic equation on a complete manifold.

Let's recall that Cheng and Yau \cite{MR385749} in 1975 used Bochner’s formula on Riemannian manifolds and employed cut-off functions via the Laplacian comparison theorem under the condition of Ricci curvature being bounded below to prove that any positive harmonic function $v$ defined on a complete non-compact Riemannian manifold with $\mathrm{Ric}_g\geq-(n-1)\kappa g$ satisfies
\begin{align}\label{yaus}
\sup_{B_{R/2}(o)}\frac{|\nabla v|}{v}\leq C_n\frac{1+\sqrt\kappa R}{R}.
\end{align}
The above elegant logarithmic estimate is also holds for the eigenvalue problem (see \cite{Li-Yau}). Now, such an estimate is called as Cheng-Yau's gradient estimate. An important feature of Cheng-Yau's estimate is that the RHS of estimate \eqref{yaus} depends only on $n$, $\kappa$ and $R$, it does not depend on the injective radius or global system. If $(M,g)$ is a non-compact complete manifold with non-negative Ricci curvature, then Liouville theorem for harmonic functions follows immediately from Cheng-Yau's gradient estimate. Moreover, Harnack inequality can also be concluded from the estimate \eqref{yaus}.

Usually, the above estimate is also called as universal a priori estimates. By using the word ``universal" here, we mean that our bounds are not only independent of any given solution under consideration but also do not require, or assume, any boundary conditions whatsoever. Actually, to derive universal a priori estimates for solutions of some partial differential equations is also one's principal purpose to explore local and global properties of these solutions (see \cite{MR615628, Dancer, {MR2350853}, MR1946918}).

Let $(M,g)$ be a complete non-compact Riemannian manifold with $\mathrm{Ric}_g\geq-(n-1)\kappa g$. We say an elliptic equation defined on $M$ satisfies Cheng-Yau type gradient estimate if for any geodesic ball $B_R(o)\subset M$ and any (positive) solution $v$ of the equation on $B_R(o)$, there holds the above estimate \eqref{yaus}.

{\em A natural question is for which equations on a complete non-compact Riemannian manifold with $\mathrm{Ric}_g\geq-(n-1)\kappa g$ the Cheng-Yau's estimate holds true exactly?}

Indeed, one has shown that for some equations, for instances $p$-harmonic functions, Lane-Emden equation etc., such a priori estimates hold true (see \cite{{MR2880214}, {MR4559367}, {he2023gradient}, {he2023nashmoser}}).

In this paper, we intend to using directly Nash-Moser iteration to establish such a gradient estimate and Liouville properties of positive solutions to the following equation:
\begin{align}\label{equ0}
\begin{cases}
\Delta_pv+b|\nabla v|^q+cv^r=0; \\
p>1,\,\, b,\, c, \, q,\, r \in \bR,
\end{cases}
\end{align}
on a complete Riemannian manifold $(M,g)$ with $\mathrm{Ric}_g\geq-(n-1)\kappa g$. It is worthy to point out that, in order to obtain Harnack inequalities and universal boundedness estimates, although one usually needs to employ the classical Nash-Moser iteration, as adopted by Gidas-Spruck \cite{MR615628} or Serrin-Zou \cite{MR1946918}, or blow-up analysis method via Liouville theorems inspired by Dancer \cite{Dancer} or Pol\'a\v{c}cik-Quittner-Souplet \cite{MR2350853}, neither of these methods can be directly applied to the above equations \eqref{equ0} on a complete Riemannian manifold. So, here we will adopt a new way to apply Nash-Moser technique to approach these estimates and provide a new routine to explore the deep relationship between these estimates and Riemannian curvature.

When equation \eqref{equ0} is defined on $\mathbb R^n$, it has been studied by many mathematicians, for details we refer to \cite{MR4150912, MR4043662, filippucci2022priori,MR1879326} and references therein. Obviously, the equation \eqref{equ0} is related to many classical equations. For instance, in the case $b=c=0$, equation \eqref{equ0} reduces to the $p$-Laplace harmonic function equation. In 2011, Wang and Zhang \cite{MR2880214} proved any positive $p$-harmonic function $v$ on  $(M,g)$ with $\mathrm{Ric}_g\geq-(n-1)\kappa g$ satisfies
\begin{align*}
\sup_{B_{R/2}(o)}\frac{|\nabla v|}{v}\leq C_{n,p}\frac{1+\sqrt\kappa R}{R}.
\end{align*}
Wang-Zhang's results generalized Cheng-Yau's result \cite{MR431040} of $p=2$ to any $p>1$ and improved Kotschwar-Ni's results \cite{MR2518892} by weakening sectional curvature condition to the Ricci curvature condition. Later  in 2014, Sung and Wang \cite{MR3275651} studied the sharp gradient estimate for eigenfunctions of $p$-Laplace operator.
	
In the case $b=-1$ and $c=0$, then equation (\ref{equ0}) reduces to the so called quasi-linear Hamilton-Jacobi equation,
\begin{align}\label{hamjoco}
\Delta_p v - |\nabla v|^q = 0.
\end{align}
For the case $(M, g)$ is the Euclidean space $\bR^n$, we refer to \cite{MR3261111, MR833413} for some Liouville type results of global solutions of equation \eqref{hamjoco}. In particular, Bidaut-V\'eron, Garcia-Huidobro and V\'eron \cite{MR3261111} proved that, if $n\geq p>1$ and $q>p-1$, then any solution $u$ of equation  (\ref{hamjoco}) defined on a domain $\Omega\subset\bR^n$ satisfies
	\begin{align}\label{jfathma}
		|\nabla v(x)|\leq c(n,p,q)\left(d(x,\partial\Omega)\right)^{-\frac{1}{q+1-p}},\quad \forall x\in\Omega\subset\mathbb{R}^n.
	\end{align}
In \cite{MR3261111}, the authors also extended their estimates to equations on complete non-compact manifolds with sectional curvature bounded from below. Recently, Han, He and Wang \cite{han2023gradient} also studied the nonlinear Hamilton-Jacobi equation on manifolds $(M,g)$ with $\mathrm{Ric}_g\geq-(n-1)\kappa g$. They gave a unified Cheng-Yau type gradietn estimate for the solutions of equation \eqref{hamjoco} with $q>p-1$ defined on $M$. Concretely, for any solution $u$ to \eqref{hamjoco} there holds the following universal estimate
	$$
	\sup_{B_{\frac{R}{2}}(o)}|\nabla v|\leq C_{n,p,q}\left( \frac{1+\sqrt{\kappa}R  }{R}\right)^{\frac{1}{q-p+1}}.
	$$
	
If $b=0$ and $c=1$, then equation \eqref{equ0} reduces to the Lane-Emden equation
\begin{align}\label{equa:1.3}
	\begin{cases}
		\Delta_pv + cv^r=0, \quad &\text{in}\,  M;\\
		v>0,\quad &\text{in}\, M.
	\end{cases}
\end{align}
In the case $M=\bR^n$, $r>0$ and $1<p<n$, there have been extensive studies on the equation \eqref{equa:1.3} over the recent decades; see \cite{MR1004713, MR1134481, MR982351, MR1121147, MR615628, MR829846, MR2350853, MR1946918, MR2522424} and references therein. Here we recall only some previous work on the gradient estimates of positive solutions to Lane-Emden equation on a complete manifold $(M,g)$ with $\mathrm{Ric}_g\geq -(n-1)\kappa$.

In the case $p=2$ and $c>0$, Wang-Wei \cite{MR4559367} derived Cheng-Yau type gradient estimates for positive solutions to \eqref{equa:1.3} under the assumption
	$$
	r\in \left(-\infty,\quad \frac{n+1}{n-1}+\frac{2}{\sqrt{n(n-1)}}\right).
	$$
For any $p>1$ and $c>0$, He-Wang-Wei \cite{he2023gradient} proved that the Cheng-Yau type gradient estimate holds for positive solutions of \eqref{equa:1.3} when
$$
r\in \left(-\infty,\quad \frac{n+3}{n-1}(p-1)\right).
$$
We refer to \cite{MR3912761} when the $p$-Laplace operator in the equation \eqref{equa:1.3} is replaced by weighted $p$-Laplace Lichnerowicz operator.
	
Now we turn to the case where $b$ and $c$ are both non-vanishing. The following theorem was proved by Bidaut-V\'eron, Garcia-Huidobro and V\'eron(\cite{MR4150912}) for $p=2$ in 2020 and by Filippucci,  Sun and Zheng \cite{filippucci2022priori} for any $p>1$ in 2022.

\begin{thm*} (Filippucci, Sun and Zheng \cite{filippucci2022priori}) Let $v$ be a positive solution to the following equation
\begin{align} \label{sun1}
\begin{cases}
\Delta_p v + M|\nabla v|^q + v^r=0, \quad\mbox{in}\,\, \Omega\subset\bR^n \\
p>1,\,\, M>0.
\end{cases}
\end{align}
If $(q, r)$ satisfies
\begin{align}\label{sun}
1<\frac{q}{p-1}<\frac{n+2}{n}\quad\text{and}\quad 1<\frac{r}{p-1}<\frac{n+3}{n-1},
\end{align}
then there exist positive constants $a$ and $c(n,p,q,r)$ such that
\begin{align}\label{sun2}
|\nabla v^a(x)|\leq c(n,p, q,r)\left(d(x,\partial\Omega) \right)^{-\frac{pa}{r-p+1}-1}, \quad\forall x\in\Omega.
\end{align}
\end{thm*}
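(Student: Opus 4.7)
The plan is to combine a Bochner-type identity for the $p$-Laplace operator with a carefully chosen family of test functions, then close the loop by Moser iteration to pass from integral to pointwise bounds. Set $z := |\nabla v|^2$ and work on the open set $\{\nabla v \ne 0\}$, handling the degenerate set by the standard perturbation argument used by Wang-Zhang \cite{MR2880214} and He-Wang-Wei \cite{he2023gradient}. Applying the Bochner formula to $z^{p/2}$ with respect to the linearization $\mathcal{L}_v$ of $\Delta_p$ at $v$, and substituting the equation $\Delta_p v = -M|\nabla v|^q - v^r$ into the $(\Delta_p v)^2$-term, yields a pointwise inequality of the schematic form
\begin{align*}
\mathcal{L}_v(z^{p/2}) \;\ge\; A(n,p)\,z^{p} - B(n,p,q)\,z^{(q+p-2)/2}|\nabla z| - C(n,p,r)\,v^{r-1}z^{p/2} - D\,v^{2r-2} - E\,z^{q}.
\end{align*}
The unavoidable cross term $v^r|\nabla v|^q$ appears and must be split by Young's inequality, and the cost of this split is precisely the pair of subcriticality conditions \eqref{sun}.

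Next, test this pointwise inequality against $\varphi := z^{s-1}v^{t}\eta^{\mu}$, where $\eta$ is a standard cutoff supported in a ball $B_R(x_0)\subset\Omega$ and $(s,t,\mu)$ are parameters to be tuned. Integration by parts, combined with a second round of Young's inequality to absorb the $|\nabla z|$-terms into the leading $\int \eta^\mu z^{s+p/2-1}v^{t+r-1}$, produces a Caccioppoli-type estimate
\begin{align*}
\int \eta^{\mu}z^{s+p/2-1}v^{t+r-1}\,dx \;\le\; C\!\int\!\bigl(|\nabla\eta|^{\mu}+\eta^{\mu-p}\bigr)z^{s+p/2-1-\epsilon}v^{t+\beta}\,dx,
\end{align*}
with $\beta,\epsilon$ depending linearly on $(p,q,r)$. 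The restriction $1<\tfrac{q}{p-1}<\tfrac{n+2}{n}$ is exactly what keeps the coefficient of the leading left-hand term strictly positive after all Young-splittings, while $1<\tfrac{r}{p-1}<\tfrac{n+3}{n-1}$ is exactly the Sobolev-subcriticality condition allowing the $p$-Sobolev inequality applied to $\eta\,z^{(s+p/2-1+\epsilon)/p}v^{(t+\beta)/p}$ to convert the right-hand side into a higher-exponent version of the left-hand side, producing the reverse-Hölder self-improvement Moser iteration requires.

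Iterating this reverse-Hölder inequality along a geometric sequence of cutoffs $R_k=(\tfrac12+2^{-k-1})R$ and interpolating with an initial $L^{q_0}$ control of $v$ (obtained from testing the equation itself against $\eta^{\mu}v^{\tau}$ for a small $\tau>0$) yields a local bound
\begin{align*}
\sup_{B_{R/2}(x_0)} v^{p(a-1)}|\nabla v|^{p} \;\le\; C(n,p,q,r)\,R^{-p-\frac{p^2 a}{r-p+1}},
\end{align*}
where the power $a>0$ is fixed at the end so that the $v$-homogeneity of the iterated quantity matches that of $|\nabla v^a|^p = a^p v^{p(a-1)}|\nabla v|^p$. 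Taking $p$-th roots and choosing $R=\tfrac12 d(x_0,\partial\Omega)$ yields \eqref{sun2} with the scaling-prescribed exponent $-\tfrac{pa}{r-p+1}-1$.

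The hard part is the simultaneous handling of the two nonlinear lower-order terms: after the Bochner substitution one is forced to dispose of the mixed term $v^r|\nabla v|^q$, and every choice of splitting imposes a linear constraint on $(s,t,\mu,a)$. The two inequalities in \eqref{sun} are precisely the two compatibility constraints, and the delicate bookkeeping — showing that a single choice of $(s,t,\mu,a)$ satisfies all these constraints uniformly as $(q,r)$ ranges over the open rectangle \eqref{sun} — is what drives the argument. Once this choice is made, the Moser iteration itself is routine.
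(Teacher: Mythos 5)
The theorem in question is quoted as prior work from Filippucci--Sun--Zheng \cite{filippucci2022priori}; the present paper cites it but does not prove it, so there is no in-paper proof to compare against. The paper's own contribution, Theorem \ref{t1} (with Corollary \ref{cor14} for $\Omega\subset\mathbb{R}^n$), is a related but \emph{stronger} statement: a Cheng--Yau bound on $|\nabla\ln v|$ valid on a strictly larger $(q,r)$-region than the FSZ rectangle (Figure \ref{fig1} shows the comparison for $n=5$, $p=3$). The method there is genuinely different from what you sketch: one sets $u=-(p-1)\ln v$, $f=|\nabla u|^2$, establishes a pointwise differential inequality of the form $\mathcal{L}(f^{\alpha_0})\ge 2\alpha_0\beta_0 f^{\alpha_0+p/2}-\cdots$ in Lemmas \ref{lem2.4}--\ref{lem2.5}, and then runs Saloff-Coste's Sobolev inequality and Nash--Moser iteration in Section \ref{sect3}. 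The logarithmic substitution removes the free Bernstein exponent $a$ from the problem entirely, which is exactly why the paper's range is wider and why no homogeneity-matching step is needed at the end.

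As to the sketch itself, it is a plausible outline of the FSZ/BVGV Bernstein--Moser scheme, but it is not a proof and it contains two concrete errors. First, you attribute the threshold $r/(p-1)<\tfrac{n+3}{n-1}$ to Sobolev subcriticality; it is not. It comes from completing the square in the Hessian-trace part of the Bochner inequality, i.e.\ in the paper's notation from $\bigl|\tfrac{n+1}{n-1}-\tfrac{r}{p-1}\bigr|<\tfrac{2}{n-1}$ (the condition defining region $W_2$), and is unrelated to the embedding $W^{1,p}\to L^{p^*}$ that only enters later, in the Moser step. Second, you assert that the two conditions of the theorem are ``exactly the two compatibility constraints'' on $(s,t,\mu,a)$, and you acknowledge that checking this simultaneously over the open rectangle is ``the hard part,'' yet the sketch neither exhibits a valid choice of $(s,t,\mu,a)$ nor shows that the linear system it produces is solvable. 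Since that bookkeeping \emph{is} the content of the theorem (the cutoff $\tfrac{n+2}{n}$ on $q/(p-1)$ only emerges after that system is actually solved), the outline does not establish the result.
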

Naturally, one wants to know if the similar gradient estimate with that established in the above theorem for the case $\Omega\subset\bR^n$ also holds true for the same equation defined on a complete Riemannian manifold?

Furthermore, one would like to ask whether or not any positive solution to equation (\ref{equ0}) on a complete non-compact Riemannian manifold with Ricci curvature bounded from below satisfies the exact Cheng-Yau type gradient estimates?

It turns out that the answer is yes. What's more, the Cheng-Yau type estimate can be established in a wider range of $(q, r)$ than the range in \eqref{sun}.
Now we are in a position to state our results.

\subsection{Local gradient estimate}\

The first goal of this article is to prove the following Cheng-Yau's local estimates holds true exactly:
\begin{thm}\label{t1}
Let $(M,g)$ be an $n$-dim ($n\geq 2$) complete Riemannian manifold satisfying $\mathrm{Ric}_g\geq-(n-1)\kappa g$ for some constant $\kappa\geq0$. Let  $v\in C^1(B_R(o)) $ be a positive solution  of equation (\ref{equ0}) on a geodesic ball $B_R(o)$. We define the following four regions,
	\begin{align*}
	W_1=&\left\{(b, c,  q, r): b\left(\frac{n+1}{n-1}-\frac{q}{p-1}\right)\geq 0 \quad \text{and}\quad
	c\left(\frac{n+1}{n-1}-\frac{r}{p-1}\right)\geq 0 \right\}	 ;
	\\
	W_2=&\left\{(b, c,  q, r): b\left(\frac{n+1}{n-1}-\frac{q}{p-1}\right)\geq 0 \quad \text{and}\quad
	\left|\frac{n+1}{n-1}-\frac{r}{p-1}\right|<\frac{2}{n-1} \right\};
	\\
	W_3=&\left\{(b, c, q, r):
	\left|\frac{n+1}{n-1}-\frac{q}{p-1}\right|<\frac{2}{n-1}  \quad \text{and}\quad
	c\left(\frac{n+1}{n-1}-\frac{r}{p-1}\right)\geq 0 \right\};
	\\
	W_4=&\left\{(b, c, q, r):
	\left( \frac{n+1}{n-1}-\frac{q}{p-1}\right)^2
	+\left(\frac{n+1}{n-1}-\frac{r}{p-1}\right)^2
	<\frac{4}{(n-1)^2}\right\}.
\end{align*}
If $p>1$, $bc\geq 0$ and $(b, c, q, r)\in W_1\cup W_2\cup W_3\cup W_4$, then there holds true
\begin{align}\label{gradient}
\sup_{B_{\frac{R}{2}}(o)}\frac{|\nabla v|}{v}\leq C(n,p,q,r) \frac{1+\sqrt{\kappa}R}{R} ,
\end{align}
where the constant $C(n,p,q,r)$ depends only on $n,\,p,\,q$ and $r$, and may vary in different $W_i$ where $i=1,\,2,\,3,\,4$ (see Figure 1).
\end{thm}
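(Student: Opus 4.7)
The plan is to apply a Nash--Moser iteration to a suitable auxiliary function built from $f = |\nabla v|^2 v^{-2\alpha}$, where the exponent $\alpha$ will be chosen depending on which of the regions $W_1,\dots,W_4$ contains $(b,c,q,r)$. I would first make the substitution $u=v^\sigma$ (equivalently, work with the logarithmic gradient $|\nabla v|/v$) so that the two nonlinearities $b|\nabla v|^q$ and $cv^r$ in \eqref{equ0} contribute, after rescaling, coefficients depending linearly on the deviations
\begin{equation*}
\eta_1 := \frac{n+1}{n-1}-\frac{q}{p-1},\qquad \eta_2 := \frac{n+1}{n-1}-\frac{r}{p-1},
\end{equation*}
which are precisely the quantities appearing in the definition of $W_1,\dots,W_4$.

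Next I would apply the Bochner formula to $|\nabla v|^2$ together with the linearized operator associated to $\Delta_p$,
\begin{equation*}
\mathcal{L}\eta := \mathrm{div}\bigl(|\nabla v|^{p-2}(\nabla \eta + (p-2)\langle \nabla \eta, \mathbf{e}\rangle \mathbf{e})\bigr),\qquad \mathbf{e}=\nabla v/|\nabla v|.
\end{equation*}
Combining a refined Kato inequality for $p$-harmonic-type functions, the hypothesis $\mathrm{Ric}_g\geq -(n-1)\kappa g$, and the substitution $\Delta_p v = -b|\nabla v|^q - cv^r$ coming from \eqref{equ0}, one is led to a differential inequality of the form
\begin{equation*}
\mathcal{L} f \geq Q(\eta_1,\eta_2;b,c)\, f^{1+\beta} - C(n,p)\,\kappa\, f - (\text{lower-order terms}),
\end{equation*}
where $Q$ is a quadratic form whose coefficients involve the signs of $b\eta_1$ and $c\eta_2$ and a ``Kato gap'' of size $\tfrac{2}{n-1}$, and $\beta=\beta(n)>0$.

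The crucial algebraic step is to verify that the coefficient $Q$ is strictly positive in each of $W_1,\dots,W_4$. In $W_1$ both terms $b\eta_1$ and $c\eta_2$ are non-negative, so $Q>0$ for free. In $W_2$ (resp.\ $W_3$) one of these sign conditions is replaced by the strict bound $|\eta_i|<\tfrac{2}{n-1}$, which allows the potentially bad linear term to be dominated by the positive Kato coefficient $\tfrac{2}{n-1}-|\eta_i|$. In $W_4$ neither linear sign condition holds, and one instead uses the disc condition $\eta_1^2+\eta_2^2<\tfrac{4}{(n-1)^2}$ together with $bc\geq 0$ and a completing-the-square/Cauchy--Schwarz argument. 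The exponent $\alpha$ (and hence $\sigma$) must be chosen differently in each region to optimize $Q$; this \emph{algebraic} step is the main obstacle and, I expect, is what forces the four separate regions in the theorem.

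Once the inequality $\mathcal{L} f \geq c f^{1+\beta} - C\kappa f$ is established on $B_R(o)$, I would run the Nash--Moser iteration in the standard way: multiply by $\varphi^{2k} f^{s}$ for a cutoff $\varphi$ supported in $B_R(o)$ with $|\nabla\varphi|\leq C/R$, integrate by parts against $\mathcal{L}$, and apply the local Sobolev inequality on $B_R(o)$ guaranteed by the Ricci lower bound and Bishop--Gromov volume comparison. Iterating $s\to\infty$ yields an $L^\infty$ bound of the form $f\lesssim (1+\sqrt{\kappa}R)^2/R^2$ on $B_{R/2}(o)$, which is equivalent (via the substitution $u=v^\sigma$) to \eqref{gradient}.
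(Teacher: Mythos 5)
Your overall architecture is the right one and matches the paper's: transform to the logarithmic gradient, apply the Bochner identity to the linearized $p$-Laplace operator, derive a pointwise differential inequality on the set $\{|\nabla u|\neq 0\}$, integrate against a cut-off, invoke the Saloff-Coste Sobolev inequality, and close with a Nash--Moser iteration to get the $L^\infty$ bound $f\lesssim (1+\sqrt{\kappa}R)^2/R^2$. You have also correctly identified the relevant deviations $\eta_1=\tfrac{n+1}{n-1}-\tfrac{q}{p-1}$, $\eta_2=\tfrac{n+1}{n-1}-\tfrac{r}{p-1}$, and the case analysis (drop a non-negative term versus absorb it by completing a square) that produces the regions $W_1,\dots,W_4$.

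There are, however, two substantive gaps. \textbf{First}, you propose to vary the exponent in the auxiliary function, writing $f=|\nabla v|^2v^{-2\alpha}$ or $u=v^\sigma$ with $\alpha,\sigma$ chosen per region and described as ``the main obstacle.'' The paper does not do this and there is no need to: it uses the \emph{fixed} logarithmic substitution $u=-(p-1)\ln v$, so that $f=|\nabla u|^2=(p-1)^2|\nabla v|^2/v^2$. With $\sigma\neq 0$ fixed, a bound on $|\nabla v|^2v^{-2\alpha}$ with $\alpha\neq 1$ is \emph{not} equivalent to a bound on $|\nabla v|/v$, so the parenthetical ``equivalently, work with the logarithmic gradient'' does not hold, and it is precisely the power substitution $v^a$ that in Filippucci--Sun--Zheng produces the weaker estimate discussed in the paper's Remark after Corollary 1.7. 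The freedom the paper actually exploits is the power $\alpha$ in $\mathcal{L}(f^{\alpha})$, not a power in the substitution: after expanding $\bigl(\sum_{i\ge 2}u_{ii}\bigr)^2$ and applying Cauchy--Schwarz, the ``bad'' terms carry coefficients of the form $d_i/(2\alpha-1)$, and it is by taking $\alpha$ merely \emph{large enough} (not optimized) that one can make the key coefficient $\beta_0>0$ under the open conditions defining $W_2$, $W_3$, $W_4$. In your sketch you write the differential inequality for $\mathcal{L}f$ rather than $\mathcal{L}(f^{\alpha})$; without the parameter $\alpha$ there is no mechanism to suppress these error coefficients, and the positivity of your $Q$ would fail on most of the claimed regions.

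\textbf{Second}, the hypothesis $bc\ge 0$ is used in all four cases, not just $W_4$. In the paper it enters already in Lemma 2.4: after writing $\Delta_p u$ in the adapted frame and substituting the equation, the quantity $\bigl(\sum_{i\ge 2}u_{ii}\bigr)^2$ contains a cross term $2b_1c_1e^{(b_2+c_2)u}f^{2-p+q/2}$ whose sign is that of $bc$, and only under $bc\ge 0$ can it be dropped. If you cannot discard this cross term you would get an indefinite quadratic form mixing $\eta_1$ and $\eta_2$, and the region decomposition changes. Your write-up should explicitly invoke $bc\ge 0$ at this step. With these two corrections -- fix the substitution to $u=-(p-1)\ln v$ and work with $\mathcal{L}(f^{\alpha})$ for $\alpha$ large, and use $bc\ge 0$ to kill the cross term -- your outline becomes essentially the paper's proof.
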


\begin{figure}
\begin{tikzpicture}
		\pgfsetfillopacity{0.5}
		\draw[help lines, color=red!5, dashed] (-0.5,-0.5) grid (4,4);
		\path[fill=red](-0.5, 3)--(3,3)--(3,-0.5)--(-0.5, -0.5);
		\path[fill=green](-0.5, 4)--(3,4)--(3,2)--(-0.5, 2);
		\path[fill=yellow](2, -0.5)--(2,3)--(4,3)--(4, -0.5);
		\fill[pattern=north west lines](3,3) circle (1);
		\draw[dotted] (4 ,-0.5) --(4, 2)  node[anchor=north]{$\frac{q}{p-1}= \frac{n+3}{n-1}$} -- (4 ,4) -- (4 ,4.5);
		\draw[dotted] (2,-0.5) --(2, 1)  node[anchor=north]{$\frac{q}{p-1}=1 $} -- (2,4) -- (2,4.5);
		\draw[dotted] (-0.5, 4 )--(2, 4) node[anchor=east]{$\frac{r}{p-1}= \frac{n+3}{n-1}$}-- (4, 4 )  -- (5, 4);
		\draw[dotted] (-0.5,2) --(1,2)  node[anchor=north]{$\frac{r}{p-1}= 1$} -- (4,2) -- (4.5,2);
		\filldraw[black] (3,3) circle (1pt) node[anchor=south]{$(\frac{n+1}{n-1}, \frac{n+1}{n-1})$};
		\draw[->,ultra thick] (-0.5,0)--(5,0) node[right]{$\frac{q}{p-1}$};
		\draw[->,ultra thick] (0,-0.5)--(0,5) node[above]{$\frac{r}{p-1}$};
\end{tikzpicture}			
\begin{tikzpicture}
		\path[fill=green!0](2, 2)--(3,2)--(3,2.5)--(2,2.5);
		\path[fill=red!50](2, 3)--(3,3)--(3,3.2) node[anchor=west]{$W_1$}--(3,3.5)--(2,3.5);				
		\path[fill=green!50](2, 4)--(3,4)--(3,4.2) node[anchor=west]{$W_2$}--(3,4.5)--(2,4.5);
		\path[fill=yellow!50](2, 5)--(3,5)--(3,5.2) node[anchor=west]{$W_3$}--(3,5.5)--(2,5.5);
		\path[pattern=north west lines](2, 6)--(3,6)--(3,6.2) node[anchor=west]{$W_4$}--(3,6.5)--(2,6.5);
\end{tikzpicture}
\caption{When $b>0, c>0$, the region of $(\frac{q}{p-1}, \frac{r}{p-1})$ in $W_1, W_2, W_3, W_4$.}
\end{figure}

	\begin{figure}	
	\begin{tikzpicture}
		\pgfsetfillopacity{0.5}
		\draw[help lines, color=red!5, dashed] (-0.5,-0.5) grid (4,4);
		\path[fill=red](7, 3)--(3,3)--(3,6)--(7, 6);
		\path[fill=green](7, 4)--(3,4)--(3,2)--(7, 2);
		\path[fill=yellow](2, 6)--(2,3)--(4,3)--(4, 6);
		\fill[pattern=north west lines](3,3) circle (1);
		\draw[dotted] (4 ,-0.5) --(4, 2)  node[anchor=north]{$\frac{q}{p-1}= \frac{n+3}{n-1}$} -- (4 ,4) -- (4 ,6);
		\draw[dotted] (2,-0.5) --(2, 1)  node[anchor=north]{$\frac{q}{p-1}=1 $} -- (2,4) -- (2,6);
		\draw[dotted] (-0.5, 4 )--(2, 4) node[anchor=east]{$\frac{r}{p-1}= \frac{n+3}{n-1}$}-- (4, 4 )  -- (7, 4);
		\draw[dotted] (-0.5,2) --(1,2)  node[anchor=north]{$\frac{r}{p-1}= 1$} -- (4,2) -- (7,2);
		\filldraw[black] (3,3) circle (1pt) node[anchor=south]{$(\frac{n+1}{n-1}, \frac{n+1}{n-1})$};
		\draw[->,ultra thick] (-0.5,0)--(7,0) node[right]{$\frac{q}{p-1}$};
		\draw[->,ultra thick] (0,-0.5)--(0,6) node[above]{$\frac{r}{p-1}$};
	\end{tikzpicture}		
	\begin{tikzpicture}
		\path[fill=green!0](2, 2)--(3,2)--(3,2.5)--(2,2.5);
		\path[fill=red!50](2, 3)--(3,3)--(3,3.2) node[anchor=west]{$W_1$}--(3,3.5)--(2,3.5);				
		\path[fill=green!50](2, 4)--(3,4)--(3,4.2) node[anchor=west]{$W_2$}--(3,4.5)--(2,4.5);
		\path[fill=yellow!50](2, 5)--(3,5)--(3,5.2) node[anchor=west]{$W_3$}--(3,5.5)--(2,5.5);
		\path[pattern=north west lines](2, 6)--(3,6)--(3,6.2) node[anchor=west]{$W_4$}--(3,6.5)--(2,6.5);
	\end{tikzpicture}	
	\caption{When $b<0, c<0$, the region of $(\frac{q}{p-1}, \frac{r}{p-1})$ in $W_1, W_2, W_3, W_4$.}
\end{figure}
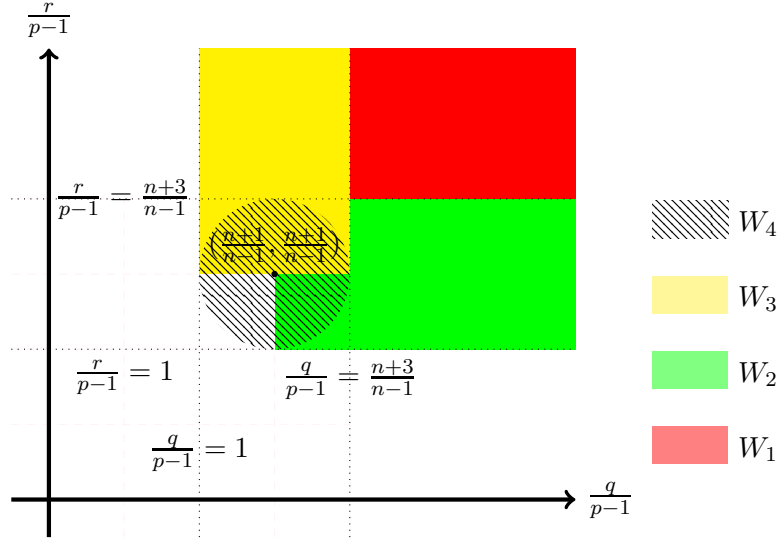

In \thmref{t1} the only assumption on $b$ and $c$ is $bc\geq 0$, i.e., the signs of $b$ and $c$. If we divide the assumption on $b$ and $c$ into two cases: (i) $b\geq0 $ and $c\geq 0$; and (ii) $b\leq 0$ and $c\leq 0$, then it is easy to see that the following conclusions hold.

\begin{cor}
Let $(M,g)$ be an $n$-dim ($n\geq 2$) complete Riemannian manifold satisfying $\mathrm{Ric}_g\geq-(n-1)\kappa g$ for some constant $\kappa\geq0$. Assume that $p>1$ and $v\in C^1(B_R(o))$ be a positive solution of equation (\ref{equ0}) on a geodesic ball $B_R(o)$. If the constants $b,\, c,\, q$ and $r$ satisfy one of the following two conditions,
\begin{description}
\item[Case 1] $b\geq 0$, $c\geq0$ and $( q, r)\in U_1\cup U_2\cup U_3$, where
\begin{align}\begin{split}
U_1 =& \left\{( q, r): \quad \left(\frac{n+1}{n-1}-\frac{q}{p-1}\right)^2+\left(\frac{n+1}{n-1}-\frac{r}{p-1}\right)^2\leq \frac{4}{(n-1)^2}\right\},\\
U_2= &\left\{( q, r): \quad \frac{q}{p-1} \leq \frac{n+1}{n-1} , \quad  \frac{r}{p-1}<\frac{n+3}{n-1}\right\},\\
U_3= &\left\{( q, r): \quad\frac{q}{p-1}<\frac{n+3}{n-1},\quad \frac{r}{p-1} \leq \frac{n+1}{n-1}\right\};				
\end{split}		
\end{align}
\item[Case 2] $b\leq 0$, $c\leq 0$ and $( q, r)\in V_1\cup V_2\cup V_3$, where
\begin{align}\label{equa:1.9}
\begin{split}
V_1 = &\left\{( q, r):  \quad \left(\frac{n+1}{n-1}-\frac{q}{p-1}\right)^2+\left(\frac{n+1}{n-1}-\frac{r}{p-1}\right)^2\leq \frac{4}{(n-1)^2}\right\},\\
V_2=&\left\{( q, r): \quad\frac{q}{p-1} \geq \frac{n+1}{n-1},\quad  \frac{r}{p-1}>1\right\},\\
V_3=&\left\{( q, r):  \quad\frac{q}{p-1}>1 ,\quad  \frac{r}{p-1} \geq \frac{n+1}{n-1} \right\},			
\end{split}		
\end{align}
\end{description}
then there holds true
\begin{align*}
\sup_{B_{\frac{R}{2}}(o)}\frac{|\nabla v|}{v}\leq C(n,p,q,r) \frac{1+\sqrt{\kappa}R}{R} ,
\end{align*}
where the constant $C=C(n,p,q,r)$ depends only on $n,\,p,\,q$ and $r$ and may differ in different $U_i$ and $V_i$.
\end{cor}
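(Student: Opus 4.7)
The plan is to deduce this corollary directly from \thmref{t1} by a routine set-inclusion check in the plane parametrized by $\bigl(\tfrac{q}{p-1},\tfrac{r}{p-1}\bigr)$. Both sign hypotheses imply $bc\geq 0$, so it suffices to verify that, under the respective assumptions on $b$ and $c$, the regions $U_1\cup U_2\cup U_3$ and $V_1\cup V_2\cup V_3$ are contained in $W_1\cup W_2\cup W_3\cup W_4$; the gradient estimate then follows from \thmref{t1} with the same qualitative dependence of the constant on $n,p,q,r$.

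For Case 1, where $b\geq 0$ and $c\geq 0$, the sign-dependent inequalities $b\bigl(\tfrac{n+1}{n-1}-\tfrac{q}{p-1}\bigr)\geq 0$ and $c\bigl(\tfrac{n+1}{n-1}-\tfrac{r}{p-1}\bigr)\geq 0$ are satisfied whenever $\tfrac{q}{p-1}\leq\tfrac{n+1}{n-1}$ and $\tfrac{r}{p-1}\leq\tfrac{n+1}{n-1}$, respectively, with the degenerate cases $b=0$ or $c=0$ only enlarging the relevant $W_i$. Thus $W_1$ reduces to the closed lower-left quadrant with vertex $\bigl(\tfrac{n+1}{n-1},\tfrac{n+1}{n-1}\bigr)$, while $W_2$ and $W_3$ extend this quadrant by horizontal and vertical strips centered on the coordinate lines through the vertex and reaching $\tfrac{r}{p-1}=\tfrac{n+3}{n-1}$ and $\tfrac{q}{p-1}=\tfrac{n+3}{n-1}$, respectively. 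A direct inspection yields $W_1\cup W_2=U_2$ and $W_1\cup W_3=U_3$; the open disk $W_4$ coincides with the interior of $U_1$, and the three arcs of $\partial U_1$ lying in the half-planes $\tfrac{q}{p-1}\leq\tfrac{n+1}{n-1}$ or $\tfrac{r}{p-1}\leq\tfrac{n+1}{n-1}$ are already contained in $W_1\cup W_2\cup W_3$. Together these give $U_1\cup U_2\cup U_3\subseteq W_1\cup W_2\cup W_3\cup W_4$.

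Case 2 follows by exactly the same computation, with the sign reversals swapping the roles of the two half-planes at each step: $W_1$ now reduces to the closed upper-right quadrant with the same vertex, and $W_2$, $W_3$ to the corresponding strips reaching down to $\tfrac{r}{p-1}=1$ and $\tfrac{q}{p-1}=1$. The identical union argument identifies $W_1\cup W_2=V_2$, $W_1\cup W_3=V_3$, and $W_4$ with the interior of $V_1$, so $V_1\cup V_2\cup V_3\subseteq W_1\cup W_2\cup W_3\cup W_4$ as well. The only minor obstacle is the single arc of $\partial U_1$ (respectively $\partial V_1$) lying strictly in the upper-right (respectively lower-left) quadrant relative to the vertex $\bigl(\tfrac{n+1}{n-1},\tfrac{n+1}{n-1}\bigr)$, which sits on $\partial W_4$ rather than inside $W_4$; this measure-zero arc is handled either by interpreting the boundary of $U_1$ or $V_1$ as excluded there, or via a standard approximation from interior points of $W_4$, since the constant $C(n,p,q,r)$ in \eqref{gradient} depends continuously on $(q,r)$ on compact subsets of $W_4$.
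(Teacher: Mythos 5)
Your overall strategy is the right one and is essentially what the paper itself does: both sign cases force $bc\geq 0$, and one reduces the hypothesis $(q,r)\in U_1\cup U_2\cup U_3$ (or $V_1\cup V_2\cup V_3$) to the hypothesis $(b,c,q,r)\in W_1\cup W_2\cup W_3\cup W_4$ of \thmref{t1} by a set-inclusion check in the $(q/(p-1),r/(p-1))$-plane. Your computations $W_1\cup W_2=U_2$, $W_1\cup W_3=U_3$ and $W_4=\mathrm{int}(U_1)$ (with the $W_i$ interpreted for $b>0$, $c>0$) are correct, and the remark that $b=0$ or $c=0$ only enlarges the $W_i$ is also correct. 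This matches the verification the paper carries out in Corollaries~\ref{21} and~\ref{27}.

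The one genuine gap is your treatment of the boundary circle. First, a small factual slip: you assert that all arcs of $\partial U_1$ lying in the two half-planes $\tfrac{q}{p-1}\leq\tfrac{n+1}{n-1}$ or $\tfrac{r}{p-1}\leq\tfrac{n+1}{n-1}$ are already in $W_1\cup W_2\cup W_3$. That fails at the two corner points $\bigl(\tfrac{n+1}{n-1},\tfrac{n+3}{n-1}\bigr)$ and $\bigl(\tfrac{n+3}{n-1},\tfrac{n+1}{n-1}\bigr)$: at the first, for instance, $q/(p-1)=(n+1)/(n-1)$ gives $|\tfrac{n+1}{n-1}-\tfrac{r}{p-1}|=\tfrac{2}{n-1}$, so the strict inequality in $W_2$ fails, $r/(p-1)>(n+1)/(n-1)$ rules out $W_1$ and $W_3$, and equality rules out $W_4$. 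So these corners, together with the open arc you flag, all sit in $U_1\setminus(W_1\cup W_2\cup W_3\cup W_4)$.

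Second, and more substantively, your fallback "approximation from interior points of $W_4$" does not close this gap. The relevant constant $\beta_0=B^3_{n,p,q,r,\alpha}$ in \lemref{lem2.5} tends to $0$ as $(q/(p-1),r/(p-1))$ approaches $\partial W_4$ (the limit as $\alpha\to\infty$ of $B^3$ vanishes exactly on the boundary circle), and $\beta_0$ enters \lemref{lem3.2}, \lemref{lpbound}, and the Moser iteration in a way that makes the final constant $C(n,p,q,r)$ blow up. Continuity on compact subsets of the open disk $W_4$ therefore gives no uniform bound as one approaches $\partial W_4$, so the estimate cannot be transferred to those boundary $(q,r)$ by approximation. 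Your first reading is the right one: the $\leq$ in the definitions of $U_1$ and $V_1$ should be $<$, which is how the corresponding ranges are actually written in the paper's own Corollaries~\ref{21} and~\ref{27}; with that reading, your inclusion argument goes through exactly.
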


\begin{rem}
To our best knowledge,  \thmref{t1} is the first Cheng-Yau type gradient estimate for  equation (\ref{equ0}) on Riemannian manifolds with Ricci curvature bounded from below. \thmref{t1} can cover some previous results .
\begin{enumerate}
\item When $b=c=0$, there are no restrictions on $q, r$ and  \thmref{t1} can recover Wang-Zhang's results(\cite{MR2880214}).

\item When $b=0$, the range of $r$ for the gradient estimate of $\Delta_pv+cv^r=0$ is $r<\frac{n+3}{n-1}(p-1)$ in the case $c>0$; or $r>p-1$ in the case $c<0$, which recover the main results of \cite{he2023gradient}.
	
\item When $c=0$, the region for the gradient estimate of $\Delta_pv+b|\nabla v|^q=0$, if $b>0$ and $r<\frac{n+3}{n-1}(p-1)$; or, $b<0$ and $r>p-1$, which recover a special case of results due to He-Hu-Wang (\cite{he2023nashmoser}).
\end{enumerate}
\end{rem}

An important application of \thmref{t1} is the following Harnack inequality.
\begin{thm}\label{t3}
Let $(M,g)$ be a complete non-compact Riemannian manifold with $\ric_g\geq-(n-1)\kappa g$, where $\kappa$ is a non-negative constant, and $p>1$. Suppose the constants $b,\, c, \, q$, and $r$ satisfying  the conditions in \thmref{t1}. If $v\in C^1(B_R(o))$ is a positive solution of equation \eqref{equ0} defined in a geodesic ball $B_R(o)\subset M$, then for any $x, y\in B_{R/2}(o)$, there holds
	$$
	v(x)/v(y)\leq  e^{C(n,p,q, r)(1+\sqrt{\kappa}R)}.
	$$
	If $v\in C^1(M)$ is a global positive solution of equation \eqref{equ0}, then for any $x,\, y\in M$, there holds
	$$
	v(x)/v(y)\leq  e^{C(n,p,q,r)\sqrt{\kappa}d(x,y)},
	$$
where $d(x,y)$ is the geodesic distance between $x$ and $y$.
\end{thm}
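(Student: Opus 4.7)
The approach is to integrate the logarithmic gradient estimate from \thmref{t1} along a minimizing geodesic joining $x$ and $y$, in the spirit of the classical Cheng-Yau derivation of Harnack from a pointwise bound on $|\nabla \log v|$.

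I would begin with the global statement, which is cleaner. Since $v \in C^1(M)$, the hypotheses of \thmref{t1} hold on every geodesic ball in $M$. Fixing any $z \in M$ and applying \thmref{t1} to the ball $B_R(z)$ gives
\[
\frac{|\nabla v|}{v}(z) \leq C(n,p,q,r)\,\frac{1+\sqrt{\kappa}\,R}{R}.
\]
Letting $R \to \infty$ yields the scale-invariant bound $|\nabla v|/v \leq C(n,p,q,r)\sqrt{\kappa}$ everywhere on $M$. Parametrizing a minimizing geodesic $\gamma:[0,d(x,y)] \to M$ by arc length and integrating,
\[
|\log v(x) - \log v(y)| \leq \int_0^{d(x,y)} \frac{|\nabla v|}{v}(\gamma(t))\,dt \leq C(n,p,q,r)\sqrt{\kappa}\,d(x,y),
\]
which exponentiates to the global Harnack inequality.

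For the local statement, given $x,y \in B_{R/2}(o)$, I would connect them by a minimizing geodesic $\gamma:[0,L] \to M$ with $L=d(x,y)\leq R$. Since $v$ is defined only on $B_R(o)$, I cannot simply take $R \to \infty$; instead, for each point $z$ on $\gamma$ I would apply \thmref{t1} on the interior ball $B_{r_z}(z)\subset B_R(o)$ with $r_z = R - d(o,z)$, obtaining the pointwise bound $|\nabla v|/v\,(z) \leq C(n,p,q,r)\,(1+\sqrt{\kappa}\,r_z)/r_z$. Combined with the scale invariance of the Cheng-Yau bound (or by covering $\gamma$ with a finite chain of such interior balls chosen so their radii stay comparable to $R$), this yields a uniform estimate $|\nabla v|/v \leq C(n,p,q,r)(1+\sqrt{\kappa}R)/R$ along $\gamma$. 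Integrating and using $L\leq R$ gives
\[
|\log v(x) - \log v(y)| \leq C(n,p,q,r)\,\frac{1+\sqrt{\kappa}\,R}{R}\cdot L \leq C(n,p,q,r)\,(1+\sqrt{\kappa}\,R),
\]
and exponentiating yields the first inequality of the theorem.

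The main technical subtlety is in the local case, since the minimizing geodesic between two points of $B_{R/2}(o)$ may drift close to $\partial B_R(o)$, where the scale-dependent constant $(1+\sqrt{\kappa}r)/r$ deteriorates. The clean workaround is to exploit the scale invariance of \thmref{t1}: either apply it at an appropriately reduced scale so that a fixed fraction of $R$ is available as the interior radius along the entire geodesic, or use a chain of overlapping balls of radius comparable to $R$ centered along $\gamma$. Either device produces the uniform constant $C(n,p,q,r)$ independent of where along $\gamma$ one evaluates the bound.
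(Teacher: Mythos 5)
Your global argument is correct and coincides with the paper's: send $R\to\infty$ in the local gradient estimate of \thmref{t1} to obtain $|\nabla\ln v|\leq C\sqrt{\kappa}$ everywhere, then integrate along a minimizing geodesic from $x$ to $y$.

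The local argument has a genuine gap, and your own diagnosis of the ``technical subtlety'' is where it lies. You propose integrating $|\nabla\ln v|$ along the minimizing geodesic $\gamma$ from $x$ to $y$, and for each $z=\gamma(t)$ you invoke \thmref{t1} on the largest interior ball $B_{r_z}(z)\subset B_R(o)$ with $r_z=R-d(o,z)$. But for $x,y\in B_{R/2}(o)$ the only a priori bound on $d(o,\gamma(t))$ is $d(o,\gamma(t))\leq d(o,x)+t<R/2+L/2<R$ (and symmetrically from $y$), where $L=d(x,y)<R$; this does \emph{not} keep $r_z$ bounded below by a fixed fraction of $R$. In fact $r_{\gamma(t)}\geq (R-L)/2$ is essentially the best one can say, so when $L$ is close to $R$ the pointwise bound $C(1+\sqrt{\kappa}r_z)/r_z$ blows up and the integral
\[
\int_0^{L}\frac{dt}{r_{\gamma(t)}}\ \lesssim\ \log\frac{R}{R-L}
\]
is unbounded as $L\to R$. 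The resulting estimate would depend on $R-L$, not just on $R$, and hence does not give the stated Harnack constant $e^{C(1+\sqrt{\kappa}R)}$. Neither of your two proposed workarounds — choosing a uniformly reduced scale, or covering $\gamma$ by balls ``of radius comparable to $R$'' — is available, because the minimizing geodesic between two points of $B_{R/2}(o)$ is not guaranteed to stay away from $\partial B_R(o)$ on a general manifold (the ball need not be geodesically convex).

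The paper sidesteps all of this by routing the comparison through the center $o$: it connects $o$ to $x$ (and separately $o$ to $y$) by minimizing geodesics. Since $d(o,\gamma(t))=t\leq d(o,x)<R/2$ along such a geodesic, every point of it lies in $B_{R/2}(o)$, where \thmref{t1} gives the \emph{uniform} bound $|\nabla\ln v|\leq C(1+\sqrt{\kappa}R)/R$. Integrating over a length $\leq R/2$ gives $|\ln v(x)-\ln v(o)|\leq C(1+\sqrt{\kappa}R)/2$, and combining the $x$ and $y$ estimates yields $v(x)/v(y)\leq e^{C(1+\sqrt{\kappa}R)}$ with no dependence on $d(x,y)$. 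So the missing idea is simply to replace the direct geodesic $x\to y$ by the broken path $x\to o\to y$; with that change your integration argument goes through exactly as the paper's does.
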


\begin{thm}\label{t4}
Let $(M, g)$ ($\dim(M)\geq 2$) be a non-compact complete manifold with with $\mathrm{Ric}_g \geq 0$ and $\Omega\subset M$ be
a domain.

(1). If $1<p<n$ and
$$p - 1 < r < \min\left\{\frac{n(p-1)}{n-p},\,\, \frac{n + 3}{n-1}(p-1)\right\},$$
then, for any positive solution $v$ to $\Delta_pv + v^r =0$ in $\Omega$ and any $o\in\Omega$ such that $d(o, \partial\Omega) > 2R$, there holds
$$\sup_{B_R(o)}v \leq C(n,p,r)R^{-\frac{p}{r-p+1}}.$$

(2). If $1<p<n$, $(q,r)\in U_1\cup U_2\cup U_3$ and
$$p - 1 < r < \frac{n(p-1)}{n-p},$$
then for any positive solution $v$ to the equation $\Delta_pv + |\nabla u|^q + v^r =0$ in $\Omega$ and any $o\in\Omega$ such that
$d(o, \partial\Omega)>2R$, there holds
$$\sup_{B_R(o)}v \leq C(n,p,q, r)R^{-\frac{p}{r-p+1}}.$$
\end{thm}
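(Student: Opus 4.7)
The plan is to combine the Cheng-Yau gradient estimate from \thmref{t1} (specialized to $\kappa=0$) with a Caccioppoli-type testing of the equation and Bishop--Gromov volume doubling. Under the hypotheses of either part, the pair $(q,r)$ lies in the admissible region of \thmref{t1} via the $b\geq 0,\, c\geq 0$ case of the corollary preceding it. Since $d(o,\partial\Omega)>2R$ we have $B_{2R}(o)\subset\Omega$, so applying \thmref{t1} on $B_{2R}(o)$ with $\kappa=0$ yields
\begin{align*}
\sup_{B_R(o)}\frac{|\nabla v|}{v}\leq \frac{C(n,p,q,r)}{R}.
\end{align*}
Integrating $|\nabla\log v|\leq C/R$ along minimizing geodesics produces the Harnack-type comparison $v(x)/v(y)\leq e^{C}$ for $x,y\in B_R(o)$; in particular $\sup_{B_R(o)}v$, $\inf_{B_R(o)}v$, and $v(o)$ are all comparable up to constants depending only on $n,p,q,r$.

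Next, I would test the equation against $\phi^p$, where $\phi\in C_c^\infty(B_R(o))$ is a standard cutoff with $\phi\equiv 1$ on $B_{R/2}(o)$ and $|\nabla\phi|\leq 4/R$. In part~(1), $-\Delta_pv=v^r$ exactly, while in part~(2), $-\Delta_pv=|\nabla v|^q+v^r\geq v^r$ since $|\nabla v|^q\geq 0$. In either case, integration by parts (no boundary term, since $\phi$ is compactly supported) gives
\begin{align*}
\int_{B_R(o)}v^r\phi^p \leq -\int_{B_R(o)}\Delta_pv\cdot\phi^p = p\int_{B_R(o)}|\nabla v|^{p-2}\langle\nabla v,\nabla\phi\rangle\phi^{p-1}.
\end{align*}
Substituting the pointwise bound $|\nabla v|\leq (C/R)v$ from the first step and using $|\nabla\phi|\leq 4/R$, the right-hand side is at most $C_1R^{-p}\int_{B_R(o)}v^{p-1}$, so
\begin{align*}
\int_{B_{R/2}(o)}v^r\leq \frac{C_1}{R^p}\int_{B_R(o)}v^{p-1}.
\end{align*}

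To convert this integral inequality into a pointwise bound on $v(o)$, I would estimate the left-hand side from below by $(\inf_{B_{R/2}(o)}v)^r\vol(B_{R/2}(o))$ and the right-hand side from above by $(\sup_{B_R(o)}v)^{p-1}\vol(B_R(o))$, then use the Harnack comparison from the first step to replace both the infimum and the supremum by $v(o)$ with universal constants. Under $\ric_g\geq 0$, Bishop--Gromov gives $\vol(B_R(o))\leq 2^n\vol(B_{R/2}(o))$, so the volume factors cancel and one obtains $v(o)^{r-p+1}\leq C_2R^{-p}$. The assumption $r>p-1$ makes the exponent $-p/(r-p+1)$ negative, so this rearranges to $v(o)\leq C(n,p,q,r)R^{-p/(r-p+1)}$; one final application of Step~1's Harnack bound promotes $v(o)$ to $\sup_{B_R(o)}v$, completing both parts.

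The main obstacle is bookkeeping: verifying that every multiplicative constant that appears---from \thmref{t1}, from integration by parts, from Bishop--Gromov doubling, and from the Harnack comparison---depends only on $n,p,q,r$ and is independent of $v$ and of any further geometric data of $(M,g)$. The Cheng-Yau estimate of \thmref{t1} is designed to deliver exactly this universality, and part~(2) introduces no new ingredient because only the one-sided inequality $-\Delta_pv\geq v^r$ is used from the equation. The upper bounds $r<(n+3)(p-1)/(n-1)$ and $r<n(p-1)/(n-p)$ ensure that $(q,r)$ stays within the joint range where both the gradient estimate and the subsequent volume-and-Harnack calculus combine to give a genuinely decaying bound in $R$.
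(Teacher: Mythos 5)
Your argument is correct, but it follows a genuinely different route from the paper's. The paper proves \thmref{t4} by first establishing a standalone integral estimate (\lemref{33}) of Mitidieri--Pohozaev type: one multiplies the inequality $\Delta_pv+v^r\leq 0$ by $\varphi v^\alpha$ with $\alpha<0$ a carefully chosen parameter, integrates by parts, and iterates H\"older/Young to obtain $\int_{B_R}v^r\leq C(n,p,r)R^{\frac{(n-p)r-(p-1)n}{r-p+1}}$ without ever invoking the gradient estimate; Bishop--Gromov then turns this into an average bound, $\inf_{B_R}v$ is bounded by the $r$-th root of the average, and finally the Harnack inequality of \thmref{t3} upgrades the infimum to the supremum. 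Your proof replaces the Mitidieri--Pohozaev step with a single Caccioppoli test against $\phi^p$ in which the gradient term is immediately killed by the pointwise Cheng--Yau bound $|\nabla v|\leq (C/R)v$ on $B_R(o)$, yielding $\int_{B_{R/2}}v^r\leq C_1R^{-p}\int_{B_R}v^{p-1}$; Harnack and Bishop--Gromov then close the loop exactly as you describe. Your route is shorter and more self-contained, since it reuses the gradient estimate rather than a separate integral lemma. It is also worth noticing that your derivation never uses the subcritical hypothesis $r<\frac{n(p-1)}{n-p}$: you only need $r>p-1$ to rearrange the final inequality and $r$ in the Harnack range of \thmref{t1}; so your argument in fact establishes the conclusion of \thmref{t4}(1) under the weaker assumption $p-1<r<\frac{n+3}{n-1}(p-1)$ alone (with the obvious analogue in part~(2)). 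That is a strictly stronger statement than the one in the paper, which is fine as a proof of the theorem but suggests the stated range is an artifact of the paper's choice of \lemref{33} rather than an intrinsic restriction. The one technical point you should flag explicitly when writing this up is that $\phi^p$ is only $C^{1}$ (not smooth) for non-integer $p$, so you must first note that the weak formulation in \defref{def1} extends to $C^1_0$ test functions by density.
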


\begin{cor}\label{cc5}
Let $(M, g)$ be a non-compact complete manifold with with $\mathrm{Ric}_g \geq 0$ and $\Omega\subset M$ be a bounded domain.

(1). Assume the same condition is given as (1) of the above \thmref{t4}. If $v$ is a positive solution of $\Delta_pv + v^r =0$ on $\Omega$, then we have
$$v(x) \leq C(n,p,r)d(x, \partial\Omega)^{-\frac{p}{r-p+1}}.$$
If $v$ is a positive solution of $\Delta_pv + v^r =0$ in a punctured domain $\Omega\setminus\{o\}$, then for any $x$ near $o$, we have
$$v(x) \leq C(n,p,r)d(x, o)^{-\frac{p}{r-p+1}}.$$

(2). Assume the same condition is given as in (2) of the above \thmref{t4}. If $v$ is a positive solution of $\Delta_pv + |\nabla u|^q + v^r =0$ on $\Omega$, then we have
$$v(x) \leq C(n, p, q, r)d(x, \partial\Omega)^{-\frac{p}{r-p+1}}.$$
If $v$ is a positive solution of $\Delta_pv + |\nabla u|^q + v^r =0$ in a punctured domain $\Omega\setminus\{o\}$, then for any $x$ near $o$, we have
$$v(x) \leq C(n,p,q,r)d(x, o)^{-\frac{p}{r-p+1}}.$$
\end{cor}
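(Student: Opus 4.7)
The plan is to derive \corref{cc5} as an essentially immediate consequence of \thmref{t4} by centering a suitably sized ball at the point where we want to estimate $v$. Let me address each case in order.

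For the first assertion of part (1), fix any $x\in\Omega$ and set $R:=\tfrac{1}{3}d(x,\partial\Omega)$, so that the geodesic ball $B_{R}(x)$ is contained in $\Omega$ and satisfies $d(x,\partial\Omega)=3R>2R$. The hypotheses on $p$ and $r$ are exactly those required by \thmref{t4}(1), so with the choice of center $o=x$ we may apply that theorem on $\Omega$ with radius $R$. This gives
\begin{align*}
v(x)\leq \sup_{B_{R}(x)}v\leq C(n,p,r)\, R^{-\frac{p}{r-p+1}}=C(n,p,r)\cdot 3^{\frac{p}{r-p+1}}\, d(x,\partial\Omega)^{-\frac{p}{r-p+1}},
\end{align*}
which after absorbing the numerical factor $3^{p/(r-p+1)}$ into the constant yields the desired pointwise bound.

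For the punctured case of part (1), observe that $\Omega':=\Omega\setminus\{o\}$ is itself a (connected) domain, on which $v$ is a positive solution, with $\partial\Omega'=\partial\Omega\cup\{o\}$. For $x$ near $o$, we have $d(x,\partial\Omega')=d(x,o)$, so setting $R:=\tfrac{1}{3}d(x,o)$ guarantees both $B_{R}(x)\subset\Omega'$ (since $d(x,o)=3R>R$) and $d(x,\partial\Omega')=3R>2R$. Applying \thmref{t4}(1) on $\Omega'$ with $o:=x$ then gives
\begin{align*}
v(x)\leq \sup_{B_{R}(x)}v\leq C(n,p,r)\, R^{-\frac{p}{r-p+1}}\leq C(n,p,r)\, d(x,o)^{-\frac{p}{r-p+1}},
\end{align*}
as required. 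Part (2) is entirely analogous: under the corresponding hypotheses on $(q,r)$ one invokes \thmref{t4}(2) instead of \thmref{t4}(1), using the same two choices $R=\tfrac{1}{3}d(x,\partial\Omega)$ and $R=\tfrac{1}{3}d(x,o)$ respectively, and the constant $C$ merely picks up an extra dependence on $q$.

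There is no real obstacle; the only step that deserves a moment of thought is verifying that the punctured domain is admissible for \thmref{t4}, but this holds because that theorem only requires $\Omega$ to be an (arbitrary) domain in $M$ and positivity of $v$ on $\Omega$, both of which transfer trivially from $\Omega$ to $\Omega\setminus\{o\}$. The factor $\tfrac{1}{3}$ in the choice of $R$ is arbitrary --- any fraction strictly smaller than $\tfrac{1}{2}$ would work and only affect the final constant.
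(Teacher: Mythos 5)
Your proof is correct and follows essentially the same route as the paper: center a ball at the point $x$ whose radius is a fixed fraction of $d(x,\partial\Omega)$ (respectively $d(x,o)$), and invoke \thmref{t4}. The paper sets $R=d(x,\partial\Omega)$ and evaluates at $x\in B_{R/2}(x)$, while you take $R=\tfrac13 d(x,\partial\Omega)$ so that the hypothesis $d(x,\partial\Omega)>2R$ of \thmref{t4} holds strictly; this is a minor bookkeeping difference, and you also spell out the ``easy to see'' step for the punctured domain by identifying $d(x,\partial(\Omega\setminus\{o\}))=d(x,o)$ near $o$, which is the intended argument.
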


\begin{rem}
The above \thmref{t4} and \corref{cc5} extend partially the corresponding results due to Dancer \cite{Dancer} and Pol\'{a}\v{c}ik, Quittner and Souplet \cite{MR2350853} for the case Euclidean space to the case $(M, g)$ is a complete non-compact Riemannian manifold with $\mathrm{Ric}\geq 0$. For more details we refer to Section 4 of the present paper.
\end{rem}

\subsection{Applications to singularity analysis on $\mathbb R^n$}\

The completeness of the Riemannian manifold and the lower boundedness of Ricci curvature are necessary for the application of Saloff-Coste's Sobolev inequalities (\cite{saloff1992uniformly}, see \lemref{salof}).  On $\bR^n$, the lower boundedness of Ricci curvature and Sobolev inequality always hold, so our theorem still holds. Hence, \thmref{t1} is of useful implications for the asymptotic behavior of solutions near
isolated singularities, see the corollaries below.
\begin{cor}\label{cor14}
Let $\Omega\subset \mathbb R^n$ be a domain. Assume $b,\, c,\, p,\, q$ and $r$ satisfy the conditions in \thmref{t1}. If $v\in C^2(\Omega)$ is a positive solution of
	\begin{align}\label{eq3.56}
		\begin{cases}
			\Delta_pv+b|\nabla v|^q+cv^r=0, \quad\text{in} \quad\Omega;\\
			p>1,\, b,\, c, \,q,\, r \in \bR,
		\end{cases}
	\end{align}
then for any $x\in\Omega$, we have
	\begin{align}\label{eq3.57}
		|\nabla\ln v(x)|\leq C(n,p,q,r)d(x, \partial\Omega)^{-1}.
	\end{align}
\end{cor}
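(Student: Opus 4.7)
The plan is to read \corref{cor14} as the specialization of \thmref{t1} to the flat complete manifold $(\bR^n, g_{\mathrm{Eucl}})$, which satisfies $\ric_g \equiv 0$, so one may take $\kappa = 0$. The hypotheses imposed on $b, c, p, q, r$ in \corref{cor14} are exactly those of \thmref{t1}, so once the geometric setup is fixed, the theorem applies verbatim.

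The key step is the choice of the ball of application. Fix an interior point $x \in \Omega$ and set $R := d(x, \partial\Omega)$. By definition of the distance to the boundary, the open Euclidean ball $B_R(x)$ is contained in $\Omega$, and since $v \in C^2(\Omega)$ is a positive solution of \eqref{eq3.56} on $\Omega$, its restriction to $B_R(x)$ meets the assumptions of \thmref{t1} with center $o = x$ and $\kappa = 0$. Substituting into the estimate \eqref{gradient} yields
$$\sup_{B_{R/2}(x)} \frac{|\nabla v|}{v} \leq \frac{C(n,p,q,r)}{R},$$
and evaluating at the center $x$ gives precisely \eqref{eq3.57}. In the degenerate case $\Omega = \bR^n$ where $d(x, \partial\Omega) = +\infty$, one applies the same estimate on Euclidean balls of arbitrarily large radius centered at $x$; letting $R \to \infty$ forces $\nabla \ln v(x) = 0$, so $v$ must be a positive constant and the claimed bound holds trivially.

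There is no substantive obstacle here: the \emph{universal} character of \thmref{t1} (the right-hand side depends only on $n, p, q, r, \kappa$ and $R$, not on the particular solution or on boundary data) is precisely what turns the pointwise bound near interior singularities into a one-line corollary. The only minor care is the standard fact that $B_{d(x,\partial\Omega)}(x) \subset \Omega$ for every interior $x$, together with the vanishing of the $\sqrt{\kappa}R$ term in flat space that leaves behind only the desired $1/R$ decay.
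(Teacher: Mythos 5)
Your proposal is correct and coincides with the paper's own argument: both set $R = d(x,\partial\Omega)$, apply \thmref{t1} with $\kappa=0$ on the ball $B_R$ centered at $x$, and read off the bound at the center from estimate \eqref{gradient}. The extra remark about the degenerate case $\Omega=\bR^n$ is a harmless addition not present in the paper.
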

	
\begin{rem}
Equation \eqref{sun1} is a special case of \eqref{eq3.56}, so \eqref{eq3.57} also holds for positive solutions of \eqref{sun1}.
Although \corref{cor14} and  \cite[Theorem 1.9]{filippucci2022priori} are derived by different methods,  they have some formal relations. If we rewrite the estimate \eqref{sun2} by (the constant a is absorbed by $c(n,p,q,r)$)
\begin{align*}
|u^{a-1}\nabla v (x)|\leq C(n,p, q,r)\left(d(x,\partial\Omega) \right)^{-\frac{pa}{r-p+1}-1}, \quad\forall x\in\Omega,
\end{align*}
then as $a\to 0^{+}$, the above estimate tends to the estimate in \corref{cor14}. However, our results can be established in a more wider range of $(q,r)$ than that in \cite[Theorem 1.9]{filippucci2022priori} and  \cite[Theorem D]{MR4150912}. The following figure shows the range of $(q,\, r)$ of Case 1 in \thmref{t1} and \cite[Theorem 1.9]{filippucci2022priori} with $n=5$ and $p=3$.
\end{rem}

	\begin{figure}[h]
	\begin{tikzpicture}
		\draw[help lines, color=red!5, dashed] (-0.5,-0.5) grid (4,4);
		\path[fill=green!30](-0.5, 3)--(4,3)--(4,-0.5)--(-0.5, -0.5);
		\path[fill=green!30](-0.5, 2)--(3,2)--(3,4)--(-0.5, 4);
		\fill[fill=green!30](3,3) circle (1);
		\path[pattern=north east lines] (2, 2)--(14/5, 2)--(14/5, 4)--(2 ,4);
		\draw[dotted] (4 ,-0.5) --(4, 2)  node[anchor=north]{$\frac{q}{p-1}= \frac{n+3}{n-1}$} -- (4 ,4) -- (4 ,4.5);
		\draw[dotted] (14/5,-0.5) --(14/5, 1)  node[anchor=north]{$\frac{q}{p-1}= \frac{n+2}{n}$} -- (14/5,4) -- (14/5,4.5);
		\draw[dotted] (-0.5, 4 )--(2, 4) node[anchor=east]{$\frac{r}{p-1}= \frac{n+3}{n-1}$}-- (4, 4 )  -- (6, 4);
		\filldraw[black] (3,3) circle (1pt) node[anchor=west]{$(\frac{n+1}{n-1}, \frac{n+1}{n-1})$};
		\filldraw[black] (2, 2) circle (1pt) node[anchor=east]{$(1,1)$};
		\draw[->,ultra thick] (-0.5,0)--(6,0) node[right]{$\frac{q}{p-1}$};
		\draw[->,ultra thick] (0,-0.5)--(0,4.5) node[above]{$\frac{r}{p-1}$};
	\end{tikzpicture}			
    \caption{The shadow region with north east lines represents the $(\frac{q}{p-1}, \frac{r}{p-1})$ in \cite[Theorem 1.9]{filippucci2022priori} and  \cite[Theorem D]{MR4150912}; the region filled with green color is our region of $(\frac{q}{p-1}, \frac{r}{p-1})$ for gradient estimate.}
    \label{fig1}
	\end{figure}
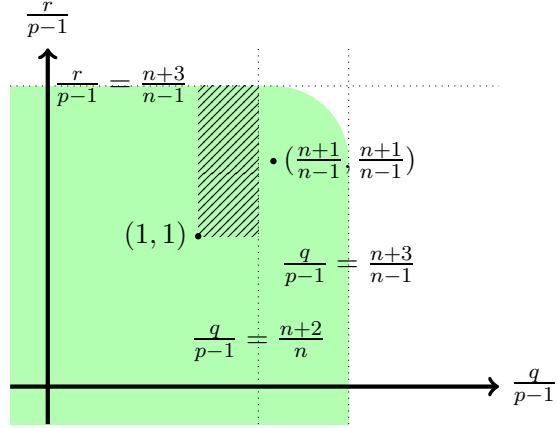

We can also employ the above \corref{cor14} to study the singularity of the positive solutions of \eqref{equ0} in domains of $\bR^n$.
\begin{cor}[Isolated singularity I]\label{cor15}
Assume that $b,\, c,\, p, \, q$ and $r$ satisfy the conditions in \thmref{t1}. Let $\Omega$ be a domain containing $0$ and $R>0$ be such that $d(0,\partial\Omega) \geq2R$. Then, for any solution $v\in C^2(\Omega\setminus\{0\})$ of \eqref{eq3.56} in $\Omega\setminus\{0\}$ there holds
\begin{align}\label{eq3.55}
v(x)\leq \sup_{|y|=R} v(y)\cdot \left(\frac{R}{|x|}\right)^{C(n,p,q,r)}, \quad\forall x\in B_{R}\setminus \{0\},
\end{align}
where $C(n,p,q,r)$ is a constant in \eqref{gradient} independent of $v$.
\end{cor}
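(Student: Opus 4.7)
The plan is to apply Corollary~\ref{cor14} to the punctured domain $\Omega' := \Omega\setminus\{0\}$ and then integrate the resulting logarithmic gradient bound along a radial segment from $x$ to the sphere $\{|y|=R\}$. The Corollary applies because $v$ is a positive solution of \eqref{eq3.56} on $\Omega'$, and $\bR^n$ is a complete manifold with $\mathrm{Ric}\equiv 0\geq -(n-1)\kappa g$, so Corollary~\ref{cor14} gives
\[
|\nabla\ln v(z)|\leq C(n,p,q,r)\,d(z,\partial\Omega')^{-1},\qquad\forall z\in\Omega'.
\]

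The first step is to compute $d(x,\partial\Omega')$ for $x\in B_R\setminus\{0\}$. Since $\partial\Omega' = \partial\Omega\cup\{0\}$, one has $d(x,\partial\Omega') = \min\bigl(d(x,\partial\Omega),\,|x|\bigr)$. The triangle inequality together with the hypothesis $d(0,\partial\Omega)\geq 2R$ and $|x|<R$ gives
\[
d(x,\partial\Omega)\geq d(0,\partial\Omega)-|x|\geq 2R-|x|>|x|,
\]
so $d(x,\partial\Omega')=|x|$. Substituting this back yields the key pointwise bound
\[
|\nabla\ln v(x)|\leq \frac{C(n,p,q,r)}{|x|},\qquad \forall x\in B_R\setminus\{0\}.
\]

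The second step is a one-dimensional integration. Fix $x\in B_R\setminus\{0\}$ and let $y_\star := R\,x/|x|$, which satisfies $|y_\star|=R$. The radial segment $\gamma(t) = t\,x/|x|$, $t\in[|x|,R]$, lies entirely in $B_R\setminus\{0\}\subset\Omega'$ (using once more that $d(0,\partial\Omega)\geq 2R$). Since $|\gamma'(t)|\equiv 1$ and $|\gamma(t)|=t$, the above pointwise bound integrates to
\[
\bigl|\ln v(x)-\ln v(y_\star)\bigr|\;\leq\;\int_{|x|}^{R}|\nabla\ln v(\gamma(t))|\,dt\;\leq\;\int_{|x|}^{R}\frac{C}{t}\,dt\;=\;C\ln\!\left(\frac{R}{|x|}\right).
\]
Because $v(y_\star)\leq \sup_{|y|=R}v(y)$, exponentiating produces the claimed bound \eqref{eq3.55} with the same constant $C(n,p,q,r)$ appearing in \eqref{gradient}.

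There is no serious obstacle in this argument: once Corollary~\ref{cor14} is in hand, the content is entirely the distance-to-boundary computation that converts the abstract bound $|\nabla\ln v(x)|\leq C/d(x,\partial\Omega')$ into the sharp radial bound $|\nabla\ln v(x)|\leq C/|x|$ near the puncture, after which integration along a radial ray is immediate. The only mild care required is the verification that the puncture, rather than $\partial\Omega$, is the nearest piece of boundary for every $x\in B_R\setminus\{0\}$; this is guaranteed precisely by the hypothesis $d(0,\partial\Omega)\geq 2R$.
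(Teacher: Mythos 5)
Your proof is correct and takes essentially the same approach as the paper's. The paper applies Corollary~\ref{cor14} to the ball $B_R(Rx/|x|)$ tangent to the origin rather than directly to the punctured domain $\Omega\setminus\{0\}$, but for points on the radial segment the distance to boundary is $|z|$ in either case, so the resulting integral and the final bound are identical.
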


\begin{cor}[Isolated singularity II*]\label{cor15*}
(i). Assume that $b\geq0$, $c\geq0$, $p=2$, $q$ and $r$ satisfy the conditions in \thmref{t1}. Let $\Omega\subset\mathbb{R}^n$ be a domain containing $0$ and $R>0$ be such that $d(0,\partial\Omega) \geq 2R$. Then, for any positive solution $v\in C^2(\Omega\setminus\{0\})$ of \eqref{eq3.56} in $\Omega\setminus\{0\}$ there hold true
$$
v(x)\leq C|x|^{2-n}\,\, \mbox{for}\,\, n\geq 3\quad\mbox{and}\quad v(x)\leq C\log\frac{1}{|x|}\,\,\mbox{for}\,\, n=2
$$
in a neighborhood of $0$, where $C(n,p,q,r,v)$ is a positive constant depending on $v$.

(ii). Assume that $b\leq0$, $c\leq0$, $p=2$, $q$ and $r$ satisfy the conditions in \thmref{t1}. Let $\Omega\subset\mathbb{R}^n$ be a domain containing $0$ and $R>0$ be such that $d(0,\partial\Omega) \geq 2R$. Then, for any solution $v\in C^2(\Omega\setminus\{0\})$ of \eqref{eq3.56} in $\Omega\setminus\{0\}$ there hold true
$$
v(x)\geq C|x|^{2-n}\,\, \mbox{for}\,\, n\geq 3\quad\mbox{and}\quad v(x)\geq C\log\frac{1}{|x|}\,\,\mbox{for}\,\, n=2
$$
in a neighborhood of $0$, where $C(n,p,q,r,v)$ is a positive constant depending on $v$.
\end{cor}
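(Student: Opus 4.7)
The proof hinges on the observation that, for $p=2$, the equation $\Delta v+b|\nabla v|^q+cv^r=0$ becomes a pure sign condition on $\Delta v$. In case (i), $b,c\geq 0$ gives $\Delta v\leq 0$, so $v$ is positive superharmonic on $\Omega\setminus\{0\}$; in case (ii), $b,c\leq 0$ gives $\Delta v\geq 0$, so $v$ is positive subharmonic. The desired inequalities then follow from the classical fact that positive super- or subharmonic functions with an isolated singularity are controlled, from above or below respectively, by the Newtonian fundamental solution. My plan is to prove this directly via Harnack on shells and an ODE analysis of spherical means, so as to keep the argument self-contained and in line with the preceding sections.

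I would first record that, on $\bR^n$ (where $\kappa=0$), \thmref{t3} applied to each ball $B_{r/4}(x_0)\subset\Omega\setminus\{0\}$ with $|x_0|=r$ sufficiently small gives a Harnack comparability constant $C_*$, independent of $r$, such that $\sup_{|x|=r}v\leq C_*\inf_{|x|=r}v$ (the sphere $S_r$ is covered by a bounded number of such balls, independent of $r$, via chaining). Consequently, the spherical mean $\phi(r):=|S_r|^{-1}\int_{S_r}v\,dS$ is comparable to $v(x)$ for any $x\in S_r$, with constants depending only on $n$.

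Next, super-/subharmonicity of $v$ yields the radial inequality $(r^{n-1}\phi'(r))'\leq 0$ (case i) or $\geq 0$ (case ii), so $g(r):=r^{n-1}\phi'(r)$ is monotone. In case (i): if $\phi$ is bounded near $0$ then so is $v$, and the upper bound is trivial; otherwise $\phi\to\infty$, so one picks any $r_0$ with $g(r_0)\leq 0$ (such $r_0$ exists, else $\phi$ would be non-decreasing and bounded) and integrates $\phi'(r)\geq g(r_0)/r^{n-1}$ on $(0,r_0]$ to obtain $\phi(r)\leq \phi(r_0)+|g(r_0)|r^{2-n}/(n-2)$ for $n\geq 3$, and analogously $\phi(r)\leq\phi(r_0)+|g(r_0)|\log(1/r)$ for $n=2$. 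In case (ii): if $v$ is bounded near $0$ the singularity is removable and the claim is vacuous; otherwise $\phi\to\infty$, which forces $\phi$ to be strictly decreasing on some interval near $0$, hence $g(r_0)<0$ at some $r_0$, and integrating $\phi'(r)\leq g(r_0)/r^{n-1}$ on $(0,r_0)$ yields the reverse bound $\phi(r)\geq |g(r_0)|r^{2-n}/(n-2)-C'$ for $n\geq 3$, with the logarithmic analogue for $n=2$. Combining with the Harnack comparability from the previous step converts these into the pointwise two-sided estimates on $v$ in a neighborhood of $0$.

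The main technical point is case (ii): one must guarantee a radius $r_0$ at which $g(r_0)<0$. This follows because $\phi$ is continuous and positive on $(0,R/2]$ with $\phi(0^+)=\infty$, so $\phi$ must be strictly decreasing on some subinterval, giving $\phi'(r_0)<0$ and equivalently $g(r_0)<0$. The $n=2$ case is routine once $n\geq 3$ is handled: replace $r^{2-n}/(n-2)$ by $\log(1/r)$ in the integration. The dependence of $C$ on $v$ enters through the quantities $\phi(r_0)$ and $g(r_0)$, which are determined by the particular solution.
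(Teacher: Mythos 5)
Your proof takes essentially the same route as the paper: deduce from $p=2$ and the sign conditions on $b,c$ that $v$ is super-/subharmonic, control the spherical mean $\bar v(r)$ by the Newtonian potential via the radial ODE for the spherical average, and then convert to a pointwise bound using the Harnack inequality on spheres from Theorem~\ref{t3}. You carry out the ODE integration step more explicitly than the paper (which simply asserts the existence of $m\geq 0$ with $\bar v(r)\leq mr^{2-n}$ resp.\ $\bar v(r)\geq mr^{2-n}$), but the key lemmas and decomposition are identical. One small remark on case (ii): when $v$ is bounded near $0$ the claimed lower bound $v(x)\geq C|x|^{2-n}$ with $C>0$ is in fact \emph{false} rather than vacuous (take $v\equiv 1$ with $c=0$, $b\leq 0$); the paper's own proof only produces $m\geq 0$, implicitly allowing $m=0$ in that case, so this is a gap in the statement shared by both arguments, not a defect of your proposal relative to the paper.
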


\begin{cor}[Boundary singularity]\label{cor16}
Assume that $b$, $c$, $p$, $q$ and $r$ satisfy the conditions in \thmref{t1}. Let $\Omega\subset\mathbb{R}^n$ be a domain with a $C^2$-smooth boundary $\partial\Omega$ and $v\in C^2(\Omega)$ be a positive solution of \eqref{eq3.56} in $\Omega$. Then there exists some $\delta>0$ such that for any $x\in\Omega_\delta$, where
	$\Omega_\delta = \{z\in\Omega:d(z,\partial\Omega)\leq \delta\}$,
we have
\begin{align*}
v(x)\leq \sup_{d(z^*,\partial\Omega)=\delta^*}  v(z^*)\left(\frac{\delta^*}{d(x,\partial\Omega)} \right)^{C(n,p,q,r)}.
\end{align*}
\end{cor}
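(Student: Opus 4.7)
The plan is to reduce the statement to an integrated form of the universal gradient bound from \corref{cor14}, transported along inward normals to $\partial\Omega$. First, I would invoke \corref{cor14} to obtain
\[
|\nabla\ln v(x)|\leq C(n,p,q,r)\,d(x,\partial\Omega)^{-1}\qquad\text{for every }x\in\Omega,
\]
using the fact that the hypotheses on $(b,c,p,q,r)$ are inherited from \thmref{t1}, and that on $\bR^n$ the Ricci curvature lower bound holds trivially with $\kappa=0$.

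Next I would exploit the $C^2$-regularity of $\partial\Omega$. By the tubular neighborhood theorem there exists $\delta_0>0$ such that the signed distance function $d(\,\cdot\,,\partial\Omega)$ is $C^2$ on the one-sided neighborhood $\Omega_{2\delta_0}:=\{z\in\Omega: d(z,\partial\Omega)\le 2\delta_0\}$, the nearest-point projection $\pi:\Omega_{2\delta_0}\to\partial\Omega$ is single-valued and $C^1$, and the inward normal rays foliate $\Omega_{2\delta_0}$. I would then set $\delta:=\delta_0$ and fix any $\delta^*\in(\delta_0,2\delta_0)$. For each $x\in\Omega_\delta$, let $\gamma:[d(x,\partial\Omega),\delta^*]\to\Omega_{2\delta_0}$ be the unit-speed inward normal emanating from $\pi(x)$, so that $\gamma(d(x,\partial\Omega))=x$, $d(\gamma(s),\partial\Omega)=s$, and $z^*:=\gamma(\delta^*)$ satisfies $d(z^*,\partial\Omega)=\delta^*$.

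Integrating $\nabla\ln v$ along $\gamma$ and applying the gradient estimate pointwise at $\gamma(s)$ yields
\[
\ln\frac{v(x)}{v(z^*)}=-\int_{d(x,\partial\Omega)}^{\delta^*}\frac{d}{ds}\ln v(\gamma(s))\,ds
\leq \int_{d(x,\partial\Omega)}^{\delta^*}\frac{C(n,p,q,r)}{s}\,ds
=C(n,p,q,r)\,\ln\frac{\delta^*}{d(x,\partial\Omega)},
\]
so exponentiation gives
\[
v(x)\leq v(z^*)\left(\frac{\delta^*}{d(x,\partial\Omega)}\right)^{C(n,p,q,r)}\leq \sup_{d(z^*,\partial\Omega)=\delta^*}v(z^*)\left(\frac{\delta^*}{d(x,\partial\Omega)}\right)^{C(n,p,q,r)},
\]
which is exactly the claimed bound (after possibly absorbing constants into $C(n,p,q,r)$).

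The main obstacle is verifying that the integration curve stays in a region where \corref{cor14} is applicable with a uniform constant; this is precisely what the $C^2$-smoothness of $\partial\Omega$ buys, via the existence of a positive reach, so that $\gamma$ is a genuine geodesic segment realizing the distance to $\partial\Omega$ at every point of its image. All other steps are routine once the tubular neighborhood is in place, and the method is essentially the same as for the isolated singularity version (\corref{cor15}), with radial rays from the singular point replaced by inward normal rays from the boundary.
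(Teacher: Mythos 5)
Your proposal is correct and follows essentially the same route as the paper: apply the universal gradient bound $|\nabla\ln v|\leq C(n,p,q,r)\,d(x,\partial\Omega)^{-1}$ from \corref{cor14}, use the $C^2$-regularity of $\partial\Omega$ to obtain a one-sided tubular neighborhood in which the nearest-point projection is single-valued and the inward normal rays foliate the region, and then integrate $\nabla\ln v$ along the inward normal segment joining $x$ to $z^*=\sigma(x)-\delta^*\mathbf{n}_{\sigma(x)}$. The paper parametrizes this segment affinely by $t\in[0,1]$ rather than by arc length $s=d(\cdot,\partial\Omega)$ as you do, and chooses $\delta^*$ as the maximal two-sided ball radius, but these are inessential variants of the same argument and lead to the identical logarithmic bound.
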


\subsection{Liouville Theorem}\

As a consequence of \thmref{t1}, we can obtain the following Liouville theorem.
\begin{cor}\label{c14}
Let $(M,g)$ be a complete non-compact Riemannian manifold satisfying $\mathrm{Ric}_g\geq0$. Suppose the constants $b,\, c,\, q$ and $r$ satisfy the conditions given in \thmref{t1}. Then any positive entire solution of equation \eqref{equ0} on $M$ must be a constant.
\end{cor}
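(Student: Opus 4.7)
The plan is to derive this Liouville-type statement as a direct consequence of the local Cheng-Yau gradient estimate in \thmref{t1}, exploiting the crucial fact that the constant on the right-hand side of \eqref{gradient} depends only on $n, p, q, r$ and not on the ball $B_R(o)$ or on the solution $v$ itself. This is precisely the type of argument that Cheng and Yau originally used in \cite{MR385749} to deduce their Liouville theorem for positive harmonic functions from the gradient estimate \eqref{yaus}.

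Concretely, I would fix an arbitrary basepoint $o \in M$ and an arbitrary point $x \in M$. Since $v$ is an entire positive solution of \eqref{equ0} on $M$, for every $R > 0$ we have $v \in C^1(B_R(o))$, so the hypotheses of \thmref{t1} apply on every geodesic ball. Because $\mathrm{Ric}_g \geq 0$, we may take $\kappa = 0$ in \thmref{t1}, which reduces the estimate \eqref{gradient} to
\begin{equation*}
\sup_{B_{R/2}(o)} \frac{|\nabla v|}{v} \leq \frac{C(n,p,q,r)}{R}.
\end{equation*}
For $R$ large enough that $x \in B_{R/2}(o)$, this yields $|\nabla v(x)|/v(x) \leq C(n,p,q,r)/R$.

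Now I would let $R \to \infty$. The constant $C(n,p,q,r)$ is independent of $R$ (this independence is the essential point), while the right-hand side tends to zero. Hence $|\nabla v(x)| = 0$. Since $x \in M$ was arbitrary, $\nabla v \equiv 0$ on $M$, and therefore $v$ is constant, as claimed.

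There is essentially no obstacle here: the entire content of the Liouville theorem is encoded in the universality of the constant in \thmref{t1}. The only point to verify is that the hypothesis $(b,c,q,r) \in W_1\cup W_2\cup W_3\cup W_4$ with $bc \geq 0$ carries over verbatim to the global setting, which it does since \thmref{t1} is formulated on arbitrary geodesic balls and we are simply applying it on the exhausting family $\{B_R(o)\}_{R>0}$. No further analysis (e.g., no use of the constant value once determined) is needed, since the statement asserts only that $v$ is constant, not the value of the constant.
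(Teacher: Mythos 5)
Your argument is correct and is essentially the same as the paper's: set $\kappa = 0$ by the nonnegative Ricci hypothesis, invoke the universal gradient bound \eqref{gradient} on balls of radius $R$, and let $R \to \infty$ to force $\nabla v \equiv 0$. The only cosmetic difference is that the paper centers the ball at the point $x_0$ in question rather than fixing a separate basepoint $o$, which is an equivalent bookkeeping choice.
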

	
By taking a similar argument with Mitidieri and Pohozaev in \cite[Theorem 15.2]{MR1879326}, we can extend \cite[Theorem 15.2]{MR1879326} to the case the underlying domain is a non-compact complete Riemannian manifold with non-negative Ricci curvature.
\begin{prop}\label{p5}
Let $(M,g)$ be a complete non-compact Riemannian manifold with $\mathrm{Ric}\geq 0$. If $1<p<n$, then the problem
$$
\begin{cases}
\Delta_pv+|\nabla v|^q + v^r\leq 0, &x\in M;\\
v> 0, &x\in M,
\end{cases}
$$
with
$$
1<\frac{r}{p-1}<\frac{n}{n-p}\quad\text{or}\quad 1<\frac{q}{p-1}<\frac{n}{n-1}
$$
does not admit any positive solution.
\end{prop}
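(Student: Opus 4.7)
The plan is to adapt the test function (nonlinear capacity) method of Mitidieri and Pohozaev \cite{MR1879326} to the manifold setting. The only geometric input the Euclidean argument needs is polynomial volume growth $|B_R|\le c_n R^n$, which on $(M,g)$ is supplied by the Bishop-Gromov comparison theorem under $\mathrm{Ric}\ge 0$. Since the distance function $d(o,\cdot)$ satisfies $|\nabla d|=1$ almost everywhere, the Lipschitz cutoff $\phi(x)=\eta(d(o,x)/R)$, with $\eta\in C^{\infty}_{c}([0,2))$ identically $1$ on $[0,1]$ and $|\eta'|\le 2$, still obeys $|\nabla\phi|\le C/R$ a.e. Thus the Euclidean scheme transfers with only cosmetic changes. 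I would argue by contradiction: suppose a positive $v$ satisfies, in the weak sense,
\[
\int_M |\nabla v|^{p-2}\nabla v\cdot\nabla\psi \ge \int_M\bigl(|\nabla v|^q+v^r\bigr)\psi
\]
for every non-negative Lipschitz $\psi$ with compact support.

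\textbf{Case 1: $1<r/(p-1)<n/(n-p)$.} I drop the non-negative $|\nabla v|^q$ term and test against $\psi=v^{-\alpha}\phi^m$ with a sufficiently small $\alpha\in(0,p-1)$ and a large integer $m$, following the classical Mitidieri-Pohozaev scheme. Differentiating $v^{-\alpha}$ produces a coercive term $\alpha\int|\nabla v|^p v^{-\alpha-1}\phi^m$ that absorbs, via Young's inequality with exponents $(p/(p-1),p)$, the cross term generated by $\nabla\phi$, leading to
\[
\int_M v^{r-\alpha}\phi^m \le C\int_M v^{p-1-\alpha}\phi^{m-p}|\nabla\phi|^p.
\]
A further H\"older application with conjugate exponents $(r-\alpha)/(p-1-\alpha)$ and $(r-\alpha)/(r-p+1)$, combined with $|\nabla\phi|\le C/R$ and Bishop-Gromov, yields
\[
\int_{B_R(o)} v^{r-\alpha}\le C R^{\,n-p(r-\alpha)/(r-p+1)}.
\]
Since $r<n(p-1)/(n-p)$, for $\alpha$ small enough the exponent on $R$ is strictly negative, so $R\to\infty$ forces $\int_M v^{r-\alpha}=0$, contradicting $v>0$.

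\textbf{Case 2: $1<q/(p-1)<n/(n-1)$.} I drop the $v^r$ term and test against $\phi^m$ directly, obtaining
\[
\int_M |\nabla v|^q\phi^m \le m\int_M |\nabla v|^{p-1}\phi^{m-1}|\nabla\phi|.
\]
Because $q>p-1$, H\"older with the conjugate pair $(q/(p-1),q/(q-p+1))$, followed by absorption of the $\int|\nabla v|^q\phi^m$ factor on the left and by $|\nabla\phi|\le C/R$ together with Bishop-Gromov, gives
\[
\int_{B_R(o)} |\nabla v|^q\le C R^{\,n-q/(q-p+1)}.
\]
The hypothesis $q<n(p-1)/(n-1)$ is exactly equivalent to $q/(q-p+1)>n$, so the exponent is negative and $R\to\infty$ forces $|\nabla v|\equiv 0$. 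Then $v$ must be a positive constant, so the original inequality reduces to $v^r\le 0$, a contradiction.

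\textbf{Main obstacle.} No new geometric difficulty arises beyond invoking Bishop-Gromov; the only point requiring care is the bookkeeping in Case 1---selecting $\alpha$ and $m$ so that every H\"older and Young exponent stays in the legal range and each absorption step is licit. One should also verify that $\eta(d(o,\cdot)/R)$ is an admissible test function, which is routine since $|\nabla d|=1$ almost everywhere and such Lipschitz functions are dense in the relevant Sobolev space.
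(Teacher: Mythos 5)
Your proposal is correct, and for the gradient-term case it takes a genuinely different (and arguably more self-contained) route than the paper. For the case $p-1<r<\frac{n(p-1)}{n-p}$, you and the paper do essentially the same thing: drop $|\nabla v|^q$, multiply by a negative power of $v$ times a cutoff, absorb the cross term by Young, then H\"older against $v$ to a positive power and invoke Bishop--Gromov volume growth; the paper's Lemma 3.3 bounds $\int v^r$ directly rather than $\int v^{r-\alpha}$, but the resulting exponent on $R$ is the same $\sigma=\frac{(n-p)r-n(p-1)}{r-p+1}<0$ as $\alpha\to0$, and the contradiction as $R\to\infty$ is identical. For the case $p-1<q<\frac{n(p-1)}{n-1}$, the paper instead applies Young's inequality to write $v^r+|\nabla v|^q\geq C\,v^{r/s}|\nabla v|^{q/t}$ and then invokes an externally proved result (Lemma \ref{l42}, due to He--Hu--Wang, handling $\Delta_p v+bv^{r'}|\nabla v|^{q'}\leq 0$); you instead drop $v^r$, test $\Delta_p v+|\nabla v|^q\leq 0$ directly against $\phi^m$, and run the same absorption--H\"older--Bishop--Gromov machinery to force $|\nabla v|\equiv 0$, whence $v$ is a positive constant and $v^r\leq 0$ gives the contradiction. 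Your equivalence $q<\frac{n(p-1)}{n-1}\iff \frac{q}{q-p+1}>n$ is exactly right and makes the exponent on $R$ negative. Your route avoids the auxiliary lemma and also the implicit extra hypothesis $r>p-1$ that creeps into the paper's Case 2 proof via the Young step, so it is both more elementary and marginally more general; the only thing one buys from the paper's approach is that the He--Hu--Wang lemma applies to the more general inhomogeneity $v^r|\nabla v|^q$, which is irrelevant here.
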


The combination of \corref{c14} and \propref{p5} yields the following theorem.
\begin{thm}\label{t6}
Let $(M,g)$ be a complete non-compact Riemannian manifold with $\mathrm{Ric}\geq 0$. If $1<p<n$, and one of the following conditions is satisfied,
\begin{itemize}
			
\item $\frac{q}{p-1}\leq 1 \quad \mbox{and} \quad \frac{r}{p-1}<\max\left\{\frac{n+3}{n-1}, \frac{n}{n-p}\right\};$
			
\item $1<\frac{q}{p-1}<\frac{n}{n-1};$
			
\item $\frac{n+1}{n-1}\geq \frac{q}{p-1}\geq\frac{n}{n-1}\quad \mbox{and} \quad\frac{r}{p-1}<\max\left\{\frac{n+3}{n-1}, \frac{n}{n-p}\right\}$;
			
\item $\frac{n+1}{n-1}< \frac{q}{p-1}<\frac{n+3}{n-1}\quad \mbox{and} \quad \frac{r}{p-1}<\max\left\{\frac{n+1}{n-1}+\sqrt{\frac{4}{(n-1)^2}-\left(\frac{n+1}{n-1}-\frac{r}{p-1}\right)^2}, \frac{n}{n-p}\right\}$;
			
\item $\frac{q}{p-1}\geq \frac{n+3}{n-1}\quad \mbox{and} \quad 1<\frac{r}{p-1}<\frac{n}{n-p}$,
\end{itemize}
then the equation
	$$
	\Delta_pv + |\nabla v|^q + v^r=0
	$$
does not admit any global positive solution on $M$.
\end{thm}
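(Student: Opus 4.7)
The plan is to prove Theorem~\ref{t6} by combining two complementary tools: the Liouville-type Corollary~\ref{c14}, which forces any positive entire solution to be constant whenever $(b,c,q,r)$ lies in the admissible region of Theorem~\ref{t1}, and the Mitidieri--Pohozaev-type Proposition~\ref{p5}, which directly excludes positive solutions whenever $\tfrac{r}{p-1}\in(1,\tfrac{n}{n-p})$ or $\tfrac{q}{p-1}\in(1,\tfrac{n}{n-1})$. Specializing to $b=c=1$, the admissible region of Theorem~\ref{t1} is $U_1\cup U_2\cup U_3$, so the strategy is, for each of the five conditions in Theorem~\ref{t6}, to partition the allowed $(q,r)$ into a Liouville subregion contained in $U_1\cup U_2\cup U_3$ and a Mitidieri--Pohozaev subregion contained in the applicability range of Proposition~\ref{p5}.

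A useful preliminary observation that I would establish first is the ``constancy-to-contradiction'' principle: if Corollary~\ref{c14} forces $v$ to be a positive constant, then $\Delta_p v\equiv 0$ and $|\nabla v|^q\equiv 0$, so the equation reduces to $v^r=0$, contradicting $v>0$. Hence every application of Corollary~\ref{c14} immediately yields non-existence, and I never need to worry separately about the constant case.

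With these ingredients in place, I would analyse the five conditions in turn. Conditions (2) and (5) are direct applications of Proposition~\ref{p5}. Conditions (1) and (3) correspond to the case $\tfrac{q}{p-1}\le\tfrac{n+1}{n-1}$: I would split along the threshold $\tfrac{r}{p-1}=\tfrac{n+3}{n-1}$; below it, $(q,r)\in U_2$ and Corollary~\ref{c14} applies, while above it but below $\tfrac{n}{n-p}$, Proposition~\ref{p5} applies. Condition (4), where $\tfrac{q}{p-1}>\tfrac{n+1}{n-1}$, is the most delicate: here membership in the admissible region requires either $\tfrac{r}{p-1}\le\tfrac{n+1}{n-1}$ (which places $(q,r)$ in $U_3$, using $\tfrac{q}{p-1}<\tfrac{n+3}{n-1}$) or $(q,r)$ in the open disk $U_1$, i.e.\ $\tfrac{r}{p-1}<\tfrac{n+1}{n-1}+\sqrt{\tfrac{4}{(n-1)^2}-\bigl(\tfrac{q}{p-1}-\tfrac{n+1}{n-1}\bigr)^2}$; outside this union but still within $\tfrac{r}{p-1}<\tfrac{n}{n-p}$, Proposition~\ref{p5} closes the remaining gap, since in this range $\tfrac{r}{p-1}>\tfrac{n+1}{n-1}>1$.

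The main obstacle is bookkeeping in condition (4): one must verify that the ``$\max$'' expression exactly matches the union of the applicability regions of the two tools, and in particular that every $(q,r)$ covered by the condition lies either in $U_1\cup U_3$ or in the strip $1<\tfrac{r}{p-1}<\tfrac{n}{n-p}$, with no sliver of $(q,r)$ left uncovered. This is essentially geometric bookkeeping on the disk $U_1$ against the horizontal strip $\{r/(p-1)<n/(n-p)\}$, but care is needed because the two sets overlap and neither contains the other, so the tight description requires the pointwise maximum of the disk's upper arc and the strip's upper edge rather than simply one of them.
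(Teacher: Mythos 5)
Your proposal is correct and follows essentially the same route as the paper: combining Corollary~\ref{c14} with Proposition~\ref{p5} and partitioning the five stated $(q,r)$-ranges, case by case on the value of $q/(p-1)$, into pieces covered by one tool or the other. Your explicit ``constancy-to-contradiction'' step (Corollary~\ref{c14} only yields constancy, and one must then note a positive constant cannot satisfy $v^r=0$) is a worthwhile detail the paper leaves implicit, and you correctly read $q/(p-1)$ rather than $r/(p-1)$ inside the square root in condition (4), matching the paper's own proof and fixing an evident typo in the theorem statement.
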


Figure \ref{fig2} shows the region of $(\frac{q}{p-1}, \frac{r}{p-1})$ with $n=5, p=3$ in \thmref{t6}.
\begin{figure}[h]
\begin{tikzpicture}
			\draw[help lines, color=red!5, dashed] (-0.5,-0.5) grid (4,4);
			\path[fill=green!30](-0.5, 5)--(2,5)--(2,-0.5)--(-0.5, -0.5);
			\path[fill=green!30](2, -0.5)--(2.5, -0.5)--(2.5, 6)--(2, 6);
			\path[fill=green!30](2.5, -0.5)--(4, -0.5)--(4, 5)--(2.5 ,5);
			\path[fill=green!30](4, 2)--(4, 5)--(7, 5)--(7,2);
			\draw  (2.5 ,6) --(2.5, 5) -- (7,5);
			\draw  (4, -0.5)--(4, 2)-- (7,2);
			\draw (-0.5, 5 )--(2, 5) -- (2, 6);
			\filldraw[black] (2, 5) circle (0.5pt) node[anchor=north]{$(1,\frac{n}{n-p})$};
			\filldraw[black] (4, 2) circle (0.5pt) node[anchor=north]{$(\frac{n+3}{n-1}, 1)$};
			\filldraw[black] (2, 4) circle (0.5pt) node[anchor=north]{$(1, \frac{n+3}{n-1} )$};
			\draw[->,ultra thick] (-0.5,0)--(7,0) node[right]{$\frac{q}{p-1}$};
			\draw[->,ultra thick] (0,-0.5)--(0,6) node[above]{$\frac{r}{p-1}$};
		\end{tikzpicture}	
		\caption{The region of $(\frac{q}{p-1}, \frac{r}{p-1})$ in \thmref{t6}.}
		\label{fig2}
\end{figure}
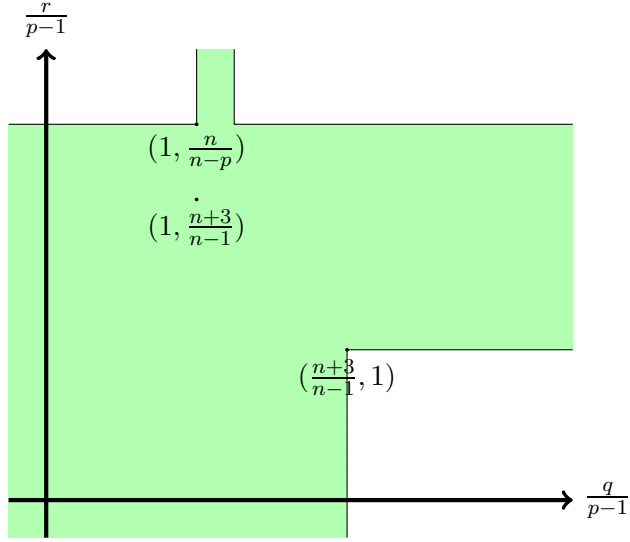
	
\subsection{Global gradient estimate}\

If $v$ is an entire solution of \eqref{equ0} on a complete non-compact Riemannian manifold with $\mathrm{Ric}_g\geq -(n-1)\kappa$, then, by \thmref{t1} we have
$$\frac{|\nabla v|}{v}\leq C(n,p,q,r)\sqrt{\kappa}.$$
In this paper, motivated by \cite{MR3275651} we will also study the bound of constant $c(n,p,q,r)$ in different regions.
\begin{thm}\label{thm1.10}
Let $(M,g)$ be an $n$-dim($n\geq 2$) complete Riemannian manifold satisfying $\mathrm{Ric}_g\geq-(n-1)\kappa g$ for some constant $\kappa\geq0$. Let $p>1$ and $v\in C^1(M)$ be a global positive solution of \eqref{equ0}. Then we have
\begin{enumerate}
\item if $(b, c, q, r)\in W_1$, there holds true
			$$
		\frac{|\nabla v|}{v}\leq \frac{(n-1)\sqrt{\kappa}}{p-1}.
		$$
		\item if  $(b, c, q, r)\in W_2$, there holds true
		$$
		\frac{|\nabla v|}{v}\leq \frac{2\sqrt{\kappa}}{(p-1)\sqrt{\left(\frac{n+3}{n-1}-\frac{r}{p-1}\right)\left(\frac{r}{p-1}-1\right)}}.
		$$
			
\item if $(b, c, q, r)\in W_3$, there holds true
		$$
		\frac{|\nabla v|}{v}\leq \frac{2\sqrt{\kappa}}{(p-1)\sqrt{\left(\frac{n+3}{n-1}-\frac{q}{p-1}\right)\left(\frac{q}{p-1}-1\right)}}.
		$$
\item if $(b, c, q, r)\in W_4$, there holds true
		$$
		\frac{|\nabla v|}{v}\leq \frac{2\sqrt{\kappa}}{(p-1)\sqrt{\frac{4}{(n-1)^2}-\left( \frac{n+1}{n-1}-\frac{q}{p-1}\right)^2
				-\left(\frac{n+1}{n-1}-\frac{r}{p-1}\right)^2}}.
		$$
\end{enumerate}
\end{thm}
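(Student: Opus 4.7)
The plan is to extract a sharp pointwise bound by combining the generalized Bochner inequality for the $p$-Laplacian (developed in earlier sections of the paper to establish \thmref{t1}) with the Omori--Yau maximum principle, which is available on $(M,g)$ thanks to the hypothesis $\mathrm{Ric}_g \geq -(n-1)\kappa g$. Set $f = |\nabla v|^2/v^2 = |\nabla \log v|^2$; the claim is that $\sqrt{\sup_M f}$ is bounded as stated.

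After substituting $\Delta_p v = -b|\nabla v|^q - cv^r$, together with $\nabla \Delta_p v$, into the Bochner identity, one gets a pointwise differential inequality schematically of the form
$$L f \;\geq\; \frac{p-1}{n-1}\, f^2 \;-\; (n-1)\kappa\, f \;+\; \mu_b\, b\, v^{q-p+1}|\nabla v|^{q-p}\, f \;+\; \mu_c\, c\, v^{r-p+1}\, f,$$
where $L$ is an elliptic operator degenerating at $\{\nabla v = 0\}$ and $\mu_b, \mu_c$ are affine functions of $\tfrac{n+1}{n-1}-\tfrac{q}{p-1}$ and $\tfrac{n+1}{n-1}-\tfrac{r}{p-1}$ respectively. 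This is precisely what forces the decomposition into the four regions $W_1,\ldots,W_4$. Apply Omori--Yau: one obtains a sequence $x_k$ with $f(x_k) \to f_\infty := \sup_M f$, $|\nabla f|(x_k) \to 0$, and $Lf(x_k)^+ \to 0$. In $W_1$, the sign hypotheses force $\mu_b b \geq 0$ and $\mu_c c \geq 0$ identically, so those two terms are discarded and the limiting algebraic inequality reduces to $\tfrac{p-1}{n-1}\,f_\infty \leq (n-1)\kappa$, giving exactly $\sqrt{f_\infty} \leq (n-1)\sqrt{\kappa}/(p-1)$, the sharp Wang--Zhang bound for the $p$-harmonic case.

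In $W_2$, $W_3$, $W_4$ one or both of $\mu_b b, \mu_c c$ are adverse. Here I would use the equation a second time at $x_k$ to trade the expression $v^{q-p+1}|\nabla v|^{q-p}$ against $v^{r-p+1}$, together with $\nabla f(x_k)\to 0$ and the algebraic identity $|\nabla v|/v = \sqrt{f}$. The hypotheses $\bigl|\tfrac{n+1}{n-1}-\tfrac{r}{p-1}\bigr|<\tfrac{2}{n-1}$ (and the analogues for $q$, or for the joint disk) are precisely the thresholds under which the adverse terms can be absorbed into $\tfrac{p-1}{n-1} f_\infty^2$ with a strictly positive remainder. Solving the resulting quadratic in $f_\infty$ then produces the radical $\bigl(\tfrac{n+3}{n-1}-\tfrac{r}{p-1}\bigr)\bigl(\tfrac{r}{p-1}-1\bigr)$ in $W_2$, its counterpart in $W_3$, and $\tfrac{4}{(n-1)^2}-\bigl(\tfrac{n+1}{n-1}-\tfrac{q}{p-1}\bigr)^2 - \bigl(\tfrac{n+1}{n-1}-\tfrac{r}{p-1}\bigr)^2$ in $W_4$, matching the statement.

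The main obstacle is the elimination of the factors $v(x_k)^{q-p+1}$ and $v(x_k)^{r-p+1}$ from the limiting inequality: since the final bound depends only on $n,p,q,r,\kappa$ (and not on $v$), one must verify that after exploiting $\nabla f(x_k)\to 0$ and the equation at $x_k$, these $v$-dependent factors cancel out so as to yield a scale-invariant inequality in $f_\infty$ alone. The correct cancellation occurs exactly inside each $W_i$, and the precise algebraic identification of the discriminants with the stated radicals is where the bulk of the technical bookkeeping lies.
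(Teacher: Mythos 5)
Your overall strategy---turn the pointwise differential inequality for $f$ into an algebraic bound on $\sup_M f$---is in the right spirit, but your route differs from the paper's and contains a genuine gap. The paper does not use Omori--Yau at all. It introduces the truncation $\omega=(f-y_i-\delta)^+$ for a region-dependent threshold $y_i$, proves in \lemref{lem41} that $\mathcal{L}(\omega^\alpha)\geq 2\alpha\omega^{\alpha-1}(b_1\omega-b_2|\nabla\omega|)$ on $\{f\geq y_i+\delta\}$, and then in \lemref{lem42} concludes $\omega\equiv 0$ by a purely \emph{integral} argument: testing against $\eta^2\omega^\gamma$, integrating by parts, absorbing $|\nabla\omega|$ by Cauchy--Schwarz for $\gamma$ large, and iterating over concentric balls $B_k$ against the volume comparison bound $\vol(B_k)\leq e^{(n-1)k\sqrt{\kappa}}$. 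This Caccioppoli-plus-iteration device entirely avoids pointwise reasoning at or near the critical set $\{\nabla u=0\}$, where the $p$-Laplacian degenerates, $u$ is only $C^{1,\alpha}$, and $\mathcal{L}$ is not uniformly elliptic.

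The gap in your plan is exactly there. The Omori--Yau weak maximum principle in the form you use (a sequence $x_k$ with $f(x_k)\to\sup f$, $|\nabla f|(x_k)\to0$, $Lf(x_k)^+\to0$) is established for the Laplacian under a Ricci lower bound; it does not automatically transfer to the degenerate, $u$-dependent operator $\mathcal{L}$, whose coefficients vanish or blow up where $f\to 0$ and whose domain excludes the critical set. Invoking the Pigola--Rigoli--Setti extensions for $\varphi$-Laplacians would require verifying structural hypotheses you have not stated, and restricting to the open set $\{f>0\}$ (where $u$ is smooth) costs completeness, which Omori--Yau needs. Separately, a smaller point: the elimination of the $v^{q-p+1}$ and $v^{r-p+1}$ factors is not really a second use of the equation at $x_k$; as in \lemref{lem2.4}--\lemref{lem2.5}, the adverse linear terms come paired with squared terms carrying the same $e^{b_2u}$ (resp.\ $e^{c_2u}$) weights, and completing the square cancels those weights identically, leaving a clean inequality in $f$ alone. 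Doing that first---before any maximum principle---is what produces the discriminants defining $y_2,y_3,y_4$, and on that algebraic piece your description does match the paper.
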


\thmref{thm1.10} implies some special results in the case of $b=0$ or $c=0$. Here, we only consider the case $b=0$.
\begin{cor}\label{cor1.11}
Let $(M,g)$ be an $n$-dim($n\geq 2$) complete Riemannian manifold satisfying $\mathrm{Ric}_g\geq-(n-1)\kappa g$ for some constant $\kappa\geq0$. Let $p>1$ and $v\in C^1(M)$ be a global positive solution of generalized Lane-Emden equation
		$$
		\Delta_pv+av^q =0.
		$$
\begin{enumerate}
\item If $a$, $p$ and $q$ satisfy $a\left(\frac{n+1}{n-1} -\frac{q}{p-1}\right)\geq0$, then we have
			$$
			\frac{|\nabla v|}{v}\leq \frac{(n-1)\sqrt{\kappa}}{p-1}.
			$$
\item If $p$ and $q$ satisfy $\left|\frac{n+1}{n-1}-\frac{q}{p-1}\right|<\frac{2}{n-1}$, then we have
$$
\frac{|\nabla v|}{v}\leq \frac{2\sqrt{\kappa}}{(p-1)\sqrt{\left(\frac{n+3}{n-1}-\frac{q}{p-1}\right)\left(\frac{q}{p-1}-1\right)}}.
$$
\end{enumerate}
\end{cor}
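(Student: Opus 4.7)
The proof strategy is to observe that the generalized Lane-Emden equation $\Delta_p v + a v^q = 0$ is a particular instance of the master equation (\ref{equ0}) obtained by setting the coefficient of the gradient term to zero: namely $b = 0$, $c = a$, and the exponent $r$ of (\ref{equ0}) identified with the exponent $q$ of the corollary. The exponent called ``$q$'' in (\ref{equ0}) then becomes irrelevant because $b = 0$, so it may be assigned any value. Under this identification, \corref{cor1.11} is an immediate consequence of \thmref{thm1.10}; the only thing to do is to check in each part that the stated hypothesis places the tuple into the correct region $W_i$ and then read off the corresponding bound.

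For part (1), with $b = 0$ the first defining inequality of $W_1$, namely $b\left(\frac{n+1}{n-1} - \frac{q_{\mathrm{old}}}{p-1}\right) \geq 0$, is trivially $0 \geq 0$. The second inequality $c\left(\frac{n+1}{n-1} - \frac{r_{\mathrm{old}}}{p-1}\right) \geq 0$ becomes, after the relabeling $c = a$ and $r_{\mathrm{old}} = q$, exactly $a\left(\frac{n+1}{n-1} - \frac{q}{p-1}\right) \geq 0$, which is the hypothesis of (1). Hence $(0, a, *, q) \in W_1$ and case (1) of \thmref{thm1.10} delivers the uniform bound $|\nabla v|/v \leq (n-1)\sqrt{\kappa}/(p-1)$.

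For part (2), the first defining inequality of $W_2$ is again automatically satisfied when $b = 0$, while the second one, $\left|\frac{n+1}{n-1} - \frac{r_{\mathrm{old}}}{p-1}\right| < \frac{2}{n-1}$, translates under the same relabeling into the hypothesis $\left|\frac{n+1}{n-1} - \frac{q}{p-1}\right| < \frac{2}{n-1}$. Consequently $(0, a, *, q) \in W_2$, and substituting $r_{\mathrm{old}} = q$ into the bound from case (2) of \thmref{thm1.10} yields
$$
\frac{|\nabla v|}{v} \leq \frac{2\sqrt{\kappa}}{(p-1)\sqrt{\left(\frac{n+3}{n-1} - \frac{q}{p-1}\right)\left(\frac{q}{p-1} - 1\right)}}.
$$

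Since the whole argument is a pure specialization of an already-stated result, there is no substantive obstacle. The only point that requires care is the bookkeeping: \thmref{thm1.10} features two independent exponents (one on $|\nabla v|$ and one on $v$), while the corollary has only one, so one must track which role the corollary's $q$ plays in the general framework. Once that identification is fixed, both parts follow in a single line.
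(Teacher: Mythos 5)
Your proposal is correct and matches the paper's treatment exactly: the paper itself introduces \corref{cor1.11} with the single remark that ``\thmref{thm1.10} implies some special results in the case of $b=0$ or $c=0$,'' and the intended argument is precisely the specialization $b=0$, $c=a$, $r\mapsto q$ that you carry out, with the dead exponent handled by noting the first inequality defining $W_1$ (respectively $W_2$) is vacuous when $b=0$. Your bookkeeping of which role the corollary's $q$ plays is the only subtle point and you have it right.
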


\begin{rem}
If we choose $a=0$ in \corref{cor1.11}, we recover the sharp gradient estimate for p-Laplace equation due to Sung and Wang \cite[Theorem 2.2]{MR3275651}.
\end{rem}
	
\begin{rem}
The first item in \corref{cor1.11} also improved  \cite[Theorem 1.1]{MR4594369} in the following aspects.
\begin{itemize}
\item The condition sectional curvature $\mathrm{Sec}_g\geq -\kappa$ in \cite[Theorem 1.1]{MR4594369} is weakened to $\mathrm{Ric}_g\geq -(n-1)\kappa$;
		
\item Their range of gradient estimate is
		$$
		a\left(\frac{n+2}{n}(p-1)-q\right)\geq 0,
		$$
while our range is $a\left(\frac{n+1}{n-1}(p-1)-q\right)\geq 0$ which is wider since $\frac{n+1}{n-1}>\frac{n+2}{n}$.
		
\item Their bound for gradient
		$$
		\frac{|\nabla v|}{v}\leq \frac{\sqrt{n(n-1)\kappa}}{p-1}
		$$
is larger than ours since $n-1<n$.
\end{itemize}
\end{rem}

\textbf{Main ideas:} It is well-known that the Nash-Moser iteration is a powerful and fundament tool in the theory of partial differential equations and geometric analysis (see \cite{he2023gradient, wang2021gradient1, MR2880214, Wang}). As we did in \cite{he2023gradient}, we first employ a logarithm transformation $u=-(p-1)\ln v$ where $v$ is the positive solution to equation \eqref{equ0}, and then consider $\mL(f^\alpha)$ where $f^\alpha=|\nabla u|^{2\alpha}$ and $\mL$ is the linearized operator $p$-Laplace operator at $u$. We need to look carefully for some regions of $(\frac{q}{p-1}, \frac{r}{p-1})$ such that some required differential inequalities on $f^\alpha$ can be established for these regions, furthermore we can use Saloff-Coste's Sobolev inequality to obtain a key integral estimate on $f$ which is universal. By the integral estimate we take a standard Nash-Moser iteration to get $L^\infty$-estimate of $f$.

The paper is organized as follows. In Section 2 we give some fundamental definitions and some notations which will be used later, recall the Saloff-Coste's Sobolev inequality on a Riemannian manifold, and establish two important lemmas which give the pointwise estimates on $|\nabla u|^2$. In Section 3 we give the proofs of \thmref{t1} and \thmref{t3}. We provide the proof of \thmref{t6} in Section 4. The proof of \thmref{thm1.10} and its corollary will be shown in Section 5.
	
\section{Preliminaries}
Throughout this paper, if not specified, we always assume that the $n$-dim Riemannian manifold $(M,g)$ is non-compact complete with $\mathrm{Ric}_g\geq-(n-1)\kappa g$ and $\dim(M)\geq 2$. Let $\nabla$ be the corresponding Levi-Civita connection. For any function $\varphi\in C^1(M)$, we denote $\nabla \varphi\in \Gamma(T^*M)$ by $\nabla \varphi(X)=\nabla_X\varphi$. We denote the volume form $$d\vol=\sqrt{\det(g_{ij})}d x_1\wedge\ldots\wedge dx_n,$$
where $(x_1,\ldots, x_n)$ is a local coordinate chart, and for simplicity, we may omit the volume form in integrals over $M$.
	
For any $p>1$, the $p$-Laplace operator is defined by
	$$
	\Delta_pu=\di(|\nabla u|^{p-2}\nabla u).
	$$
The solution of $p$-Laplace equation $\Delta_pu=0$ is the critical point of the energy functional
	$$
	E(u)=\int_M|\nabla u|^p.
	$$
	
\begin{defn}\label{def1}
A function $v$ is said to be a (weak) positive solution of equation \eqref{equ0} on a region $\Omega$ if $v\in C^1(\Omega), v>0 $ and for all $\psi\in C_0^\infty(\Omega)$, we have
\begin{align*}
-\int_M|\nabla v|^{p-2}\la\nabla v,\nabla\psi\ra+b\int_M|\nabla v|^q\psi+c\int_Mv^r\psi=0.
\end{align*}
\end{defn}
It is worth mentioning that any solution $v$ of equation (\ref{equ0}) satisfies $v\in W^{2,2}_{loc}(\Omega)$ and $v\in C^{1,\alpha}(\Omega)$ for some $\alpha\in(0,1)$ (for example, see \cite{MR0709038, MR0727034,MR0474389}). Moreover, away from $\{\nabla v=0\}$, $v$ is in fact smooth.
	
In our proof of \thmref{t1}, the following Sobolev inequality due to Saloff-Coste \cite{saloff1992uniformly} plays an important role:

\begin{lem}[Saloff-Coste]\label{salof}
Let $(M,g)$ be a complete manifold with $\mathrm{Ric}\geq-(n-1)\kappa$. For $n>2$, there exists $c_0$ depending only on $n$, such that for all $B\subset M$ of radius R and  volume $V$ we have for $f\in C^{\infty}_0(B)$
		$$
		\|f\|_{L^{\frac{2n}{n-2}}(B)}^2\leq e^{c_0(1+\sqrt{\kappa}R)}V^{-\frac{2}{n}}R^2\left(\int_B|\nabla f|^2+R^{-2}f^2\right).
		$$
For $n=2$, the above inequality holds with $n$ replaced by any fixed $n'>2$.
\end{lem}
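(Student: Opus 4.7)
The plan is to deduce this local Sobolev inequality from two classical ingredients that are both available under the Ricci lower bound $\mathrm{Ric}\geq -(n-1)\kappa g$: a volume doubling property on balls, and a scale-invariant $L^2$--Poincar\'e inequality on balls. The scheme I have in mind is the one introduced by Saloff-Coste in \cite{saloff1992uniformly}, which shows that doubling together with an $L^2$--Poincar\'e inequality is equivalent to a uniform family of Sobolev-type inequalities of exactly the form stated, with the constant blowing up controllably in the radius when one has to accommodate curvature.

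First I would invoke the Bishop--Gromov volume comparison: on a complete manifold with $\mathrm{Ric}\geq -(n-1)\kappa g$, the function $r\mapsto \vol(B_r(x))/V_\kappa(r)$ is non-increasing, where $V_\kappa(r)$ is the volume of the $r$-ball in the simply connected space form of curvature $-\kappa$. From this one reads off the \emph{local doubling} estimate $\vol(B_{2R}(x))\leq e^{c_1(n)(1+\sqrt\kappa R)}\vol(B_R(x))$, with an exponential factor in $\sqrt\kappa R$ because $V_\kappa(r)$ grows exponentially for $r\gg 1/\sqrt\kappa$. This is exactly the source of the $e^{c_0(1+\sqrt\kappa R)}$ in the conclusion.

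Next I would establish the \emph{weak Poincar\'e inequality on balls}: there exists $c_2(n)$ so that for all $B=B_R(x)$ and all $f\in C^\infty(2B)$,
\[
\int_B |f-\bar f_B|^2\,d\vol \leq c_2(n)\,e^{c_3(n)\sqrt\kappa R}\,R^2 \int_{2B}|\nabla f|^2\,d\vol ,
\]
where $\bar f_B$ is the mean value of $f$ on $B$. This is Buser's inequality, proved via the segment/needle decomposition using the Laplacian comparison that follows from the Ricci bound; again the exponential in $\sqrt\kappa R$ comes from switching between local and Euclidean scales. Once one has doubling plus this $L^2$--Poincar\'e on all balls in a uniform fashion, one is in the classical setting where the argument of Saloff-Coste (or equivalently a Moser-type truncation iteration starting from the Poincar\'e inequality and feeding in doubling) produces the Sobolev inequality.

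The final step is the Sobolev bootstrap itself. Starting from the Poincar\'e inequality and the doubling lower bound $\vol(B_r(y))\geq c\,(r/R)^n\,\vol(B_R(x))\,e^{-c(1+\sqrt\kappa R)}$ valid for $y\in B_R(x)$ and $r\leq R$ (the reverse doubling is the pseudo-Euclidean dimension statement), a standard truncation/covering argument due to Saloff-Coste produces for $f\in C_c^\infty(B)$ the estimate
\[
\Bigl(\int_B |f|^{\frac{2n}{n-2}}\Bigr)^{\frac{n-2}{n}} \leq e^{c_0(1+\sqrt\kappa R)}\,V^{-\frac{2}{n}}R^2\Bigl(\int_B |\nabla f|^2 + R^{-2}f^2\Bigr).
\]
The $n=2$ case is handled by embedding $(M,g)$ artificially into one higher dimension (taking the product with $S^1$ or, equivalently, using a fictitious $n'>2$) so that the same machinery applies; this is the reason for the remark about replacing $n$ by any fixed $n'>2$. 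The main technical obstacle, and the reason the theorem is non-trivial despite the ingredients being classical, is bookkeeping the constants so that every intermediate estimate (doubling, Buser, Moser iteration) contributes only an $e^{O(1+\sqrt\kappa R)}$ factor that can be absorbed into the final exponential; any polynomial-in-$\kappa R$ loss at any intermediate step would have to be tracked and shown harmless.
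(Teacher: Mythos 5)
The paper offers no proof of this lemma: it is quoted verbatim as a known result from Saloff-Coste \cite{saloff1992uniformly}, so there is no internal proof to compare against, and the relevant question is whether your sketch is a sound reconstruction of the argument in the cited source. In its main lines it is: Bishop--Gromov gives local volume doubling with an $e^{c(n)(1+\sqrt\kappa R)}$ constant, a Neumann-type $L^2$--Poincar\'e inequality on balls is available with a similarly controlled constant under the Ricci lower bound, and the classical implication ``doubling plus Poincar\'e implies a scale-invariant Sobolev inequality,'' carried out by Saloff-Coste's truncation-and-covering iteration, produces exactly the stated estimate once one verifies that every step loses only an $e^{O(1+\sqrt\kappa R)}$ factor. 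Two points deserve tightening, however. The attribution of the ball Poincar\'e inequality to a ``segment/needle decomposition'' is anachronistic relative to the 1992 source: Buser's inequality was originally obtained by heat-kernel/gradient-estimate methods, and Saloff-Coste's own derivation of this Sobolev inequality in \cite{saloff1992uniformly} proceeds through Li--Yau heat-kernel upper bounds and the equivalence between on-diagonal Gaussian bounds and Nash/Sobolev-type functional inequalities, rather than through the Poincar\'e-plus-doubling route you describe; both routes are valid and give the same conclusion, but they are genuinely different arguments. Second, the $n=2$ clause requires no artificial embedding or product with $S^1$: since the doubling and Poincar\'e (or heat-kernel) inputs are insensitive to the choice of Sobolev exponent, one simply runs the same iteration with $2n'/(n'-2)$ in place of $2n/(n-2)$ for any fixed $n'>2$, and the exponential constant is unchanged. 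Neither point is a gap in your reasoning, but for fidelity to the cited reference you should be aware that the actual proof there is the heat-kernel one, and the $n=2$ fix is purely a change of exponent.
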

	
Let $v$ be a positive solution of equation \eqref{equ0}, i.e.
	$$
	\Delta_pv+b|\nabla v|^p+cv^r=0.
	$$
Set $$u=-(p-1)\ln v.$$
Then $u$ satisfies
	\begin{align}\label{equ1}
		\Delta_pu-|\nabla u|^p-b_1e^{b_2u}|\nabla v|^q-c_1e^{c_2u}=0,
	\end{align}
where
	\begin{align}\label{const}
		b_1 =b(p-1)^{p-1-q},\quad b_2=\frac{p-1-q}{p-1}, \quad c_1 = c(p-1)^{p-1},\quad c_2=\frac{p-1-r}{p-1}.
	\end{align}
Let $f=|\nabla u|^2$. Then the above equation becomes
\begin{align}\label{equf}
\Delta_pu-f^{\frac{p}{2}}-b_1e^{b_2u}f^{\frac{q}{2}}-c_1e^{c_2u}=0.
\end{align}
	
Now we consider the linearized operator $\mL$ of $p$-Laplace operator at $u$. Direct computation shows
\begin{align*}
\mL(\psi)=\di\left(f^{p/2-1}\nabla \psi+(p-2)f^{p/2-2}\la\nabla \psi,\nabla u\ra\nabla u\right).
\end{align*}
We need  the following identity for the operator $\mathcal L(f^\alpha)$.
	
\begin{lem}[\cite{he2023gradient}]
For any $\alpha \geq 1$,
		\begin{align}
			\label{bochner1}
			\begin{split}
				\mathcal{L} (f^{\alpha}) =
				&\
				\alpha(\alpha+\frac{p}{2}-2)f^{\alpha+\frac{p}{2}-3}|\nabla f|^2
				+
				2\alpha f^{\alpha+\frac{p}{2}-2} (|\nabla\nabla u|^2 + \ric(\nabla u,\nabla u) )
				\\
				&
				+
				\alpha(p-2)(\alpha-1)f^{\alpha+\frac{p}{2}-4}\langle\nabla f,\nabla u\rangle^2
				+2\alpha f^{\alpha-1}\langle\nabla\Delta_p u,\nabla u\rangle
				.
			\end{split}
		\end{align}
holds point-wisely in $\{x:f(x)>0\}$.
\end{lem}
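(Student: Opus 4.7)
The plan is to substitute $\psi=f^\alpha$ into the definition $\mL(\psi)=\di\bigl(f^{p/2-1}\nabla\psi+(p-2)f^{p/2-2}\la\nabla\psi,\nabla u\ra\nabla u\bigr)$, use $\nabla(f^\alpha)=\alpha f^{\alpha-1}\nabla f$ to split $\mL(f^\alpha)$ into two divergence expressions, expand each by the Leibniz rule, and then invoke the standard Bochner--Weitzenb\"ock identity to rewrite $\Delta f$ in terms of curvature and Hessian data. The concluding step is to convert the residual $\Delta u$ and $\Hess(u)$ terms into a single $\la\nabla\Delta_pu,\nabla u\ra$ term via the $p$-Laplace identity $\Delta_pu=f^{p/2-1}\Delta u+(p-2)f^{p/2-2}\Hess(u)(\nabla u,\nabla u)$, which follows from expanding the divergence defining $\Delta_pu$ directly.

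Concretely, I would first write $\mL(f^\alpha)=\alpha\di\bigl(f^{\alpha+p/2-2}\nabla f\bigr)+\alpha(p-2)\di\bigl(f^{\alpha+p/2-3}\la\nabla f,\nabla u\ra\nabla u\bigr)$. Expanding the first divergence yields $\alpha(\alpha+p/2-2)f^{\alpha+p/2-3}|\nabla f|^2+\alpha f^{\alpha+p/2-2}\Delta f$, and Bochner's formula $\tfrac12\Delta f=|\nabla\nabla u|^2+\ric(\nabla u,\nabla u)+\la\nabla\Delta u,\nabla u\ra$ immediately produces the $|\nabla\nabla u|^2+\ric$ block of the target, together with a leftover $2\alpha f^{\alpha+p/2-2}\la\nabla\Delta u,\nabla u\ra$ piece that must be reconciled later. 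For the second divergence, the Leibniz rule gives four pieces: a $\la\nabla f,\nabla u\ra^2$ contribution with coefficient $(\alpha+p/2-3)f^{\alpha+p/2-4}$, a $\Hess(f)(\nabla u,\nabla u)$ contribution, a $\la\nabla f,\nabla_{\nabla u}\nabla u\ra$ contribution, and a $\la\nabla f,\nabla u\ra\Delta u$ contribution. The crucial algebraic fact $\la\nabla f,\nabla u\ra=2\Hess(u)(\nabla u,\nabla u)$, together with differentiating the $p$-Laplace identity above and contracting with $\nabla u$, lets the last two of these combine with the $\la\nabla\Delta u,\nabla u\ra$ residual from the first part into the single coefficient $2\alpha f^{\alpha-1}\la\nabla\Delta_pu,\nabla u\ra$.

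The main obstacle I expect is coefficient bookkeeping: one must verify that the raw $\la\nabla f,\nabla u\ra^2$-coefficient $\alpha(p-2)(\alpha+p/2-3)f^{\alpha+p/2-4}$ coming from the second divergence, after absorbing the cross-terms generated when rewriting the $\Delta u$- and $\Hess(f)$-pieces in terms of $\la\nabla\Delta_pu,\nabla u\ra$, collapses precisely to $\alpha(p-2)(\alpha-1)f^{\alpha+p/2-4}$, and that the $|\nabla\nabla u|^2+\ric$ block retains exactly the clean prefactor $2\alpha f^{\alpha+p/2-2}$ without spurious Hessian contamination from the second divergence. This is a finite but delicate index chase; once the telescoping is checked, the stated identity follows term-by-term. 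All computations are valid in $\{f>0\}$ since $u$ is smooth there by standard regularity for the $p$-Laplacian.
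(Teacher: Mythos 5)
Your proposal is correct and is essentially the canonical derivation; note that the present paper does not reproduce a proof of this lemma (it simply cites \cite{he2023gradient}), so there is no in-paper argument to compare against. Your plan — substitute $\nabla(f^\alpha)=\alpha f^{\alpha-1}\nabla f$, split $\mL(f^\alpha)$ into the two divergences, apply Bochner to the first, Leibniz-expand the second into the $\la\nabla f,\nabla u\ra^2$, $\Hess(f)(\nabla u,\nabla u)$, $\la\nabla f,\nabla_{\nabla u}\nabla u\ra$ and $\la\nabla f,\nabla u\ra\Delta u$ pieces, and repackage via $\Delta_pu=f^{p/2-1}\Delta u+\tfrac{p-2}{2}f^{p/2-2}\la\nabla f,\nabla u\ra$ — carries through exactly, and the coefficient bookkeeping you flagged does close: the raw exponent factor $(\alpha+\tfrac{p}{2}-3)$ on $\la\nabla f,\nabla u\ra^2$ decomposes as $(\alpha-1)+(\tfrac{p}{2}-2)$, the latter piece being absorbed into $2\alpha f^{\alpha-1}\la\nabla\Delta_pu,\nabla u\ra$ together with the $\Delta u\la\nabla f,\nabla u\ra$ and Hessian cross-terms, leaving precisely $\alpha(p-2)(\alpha-1)$.
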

	
Using the above lemma, we give a basic estimate for $\mL(f^\alpha)$.
	
\begin{lem}\label{lem2.4}
Let $u\in C^1(\Omega)$ be a solution of \eqref{equ1}. If $bc \geq 0$, then there holds point-wisely on $\{x:f(x)\neq 0\}$,
		\begin{align}
			\label{equa2.5}
			\begin{split}
				\frac{f^{2-\alpha-\frac{p}{2}}}{2\alpha}\mathcal{L} (f^{\alpha})
				\geq &\
				\frac{f^2}{n-1}-(n-1)\kappa f
				-\frac{a_1}{2}f^{\frac{1}{2}}|\nabla f|
				+  \left( \frac{n+1}{n-1}-\frac{q}{p-1}\right)e^{b_2u}f^{\frac{q-p}{2}+2}
				\\
				&
				+c_1\left( \frac{n+1}{n-1}-\frac{r}{p-1}\right)e^{c_2u}f^{2-\frac{p}{2}}
				+\Big(\frac{1}{n-1} -\frac{d_1}{2\alpha-1}  \Big)\left(b_1e^{b_2u}f^{\frac{q-p}{2}+1}\right)^2
				\\
				& +\Big(\frac{1}{n-1}
				-\frac{d_2}{2\alpha-1}  \Big)\left(c_1e^{c_2u}f^{1-\frac{p}{2}}\right)^2,
				\end{split}
		\end{align}
where $d_1$ and $d_2$ are positive constants depending only on $n$ and $p$.
\end{lem}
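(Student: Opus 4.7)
The plan is to divide the Bochner identity \eqref{bochner1} by $2\alpha f^{\alpha+p/2-2}$, so that the claim becomes a pointwise lower bound on
$$|\nabla\nabla u|^2+\ric(\nabla u,\nabla u)+\frac{2\alpha+p-4}{4f}|\nabla f|^2+\frac{(p-2)(\alpha-1)\langle\nabla f,\nabla u\rangle^2}{2f^2}+f^{1-p/2}\langle\nabla\Delta_p u,\nabla u\rangle.$$
Bound the Ricci term below by $-(n-1)\kappa f$. At each $x$ with $\nabla u(x)\neq 0$, pick an orthonormal frame $\{e_i\}$ with $e_1=\nabla u/\sqrt f$, so that $u_{11}=f_1/(2\sqrt f)$ and $\sum_{j\geq 2}u_{1j}^2=|\nabla f|^2/(4f)-u_{11}^2$. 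Cauchy--Schwarz applied to $\sum_{j\geq 2}u_{jj}^2\geq(\Delta u-u_{11})^2/(n-1)$ delivers the refined Kato inequality
$$|\nabla\nabla u|^2\geq\frac{|\nabla f|^2}{2f}-u_{11}^2+\frac{(\Delta u-u_{11})^2}{n-1}.$$

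Next I would use \eqref{equf} twice. Solving it for $\Delta u$ (via $\Delta_p u=f^{(p-2)/2}\Delta u+\frac{p-2}{2}f^{(p-4)/2}\langle\nabla f,\nabla u\rangle$) gives
$$\Delta u=f+b_1 e^{b_2 u}f^{(q-p)/2+1}+c_1 e^{c_2 u}f^{(2-p)/2}-\frac{p-2}{2\sqrt f}\,f_1,$$
and differentiating \eqref{equf} yields a closed-form $\langle\nabla\Delta_p u,\nabla u\rangle$ whose $b_2$- and $c_2$-pieces, after multiplication by $f^{1-p/2}$, become $b_1 b_2 e^{b_2 u}f^{(q-p)/2+2}$ and $c_1 c_2 e^{c_2 u}f^{2-p/2}$, the remaining pieces being linear in $f_1$. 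Expanding $(\Delta u-u_{11})^2/(n-1)$ produces: (i) the leading quadratic $f^2/(n-1)$; (ii) the two pure squares $(b_1 e^{b_2 u}f^{(q-p)/2+1})^2/(n-1)$ and $(c_1 e^{c_2 u}f^{1-p/2})^2/(n-1)$; (iii) a cross product between the two nonlinearities, with sign $b_1 c_1\geq 0$ by hypothesis $bc\geq 0$, which may be discarded; and (iv) the principal cross terms $\frac{2f}{n-1}\cdot b_1 e^{b_2 u}f^{(q-p)/2+1}$ and $\frac{2f}{n-1}\cdot c_1 e^{c_2 u}f^{(2-p)/2}$. Summing (iv) with the $b_2$- and $c_2$-contributions above gives, using the definitions \eqref{const}, exactly the coefficients $\frac{2}{n-1}+b_2=\frac{n+1}{n-1}-\frac{q}{p-1}$ and $\frac{2}{n-1}+c_2=\frac{n+1}{n-1}-\frac{r}{p-1}$ appearing in \eqref{equa2.5}.

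All remaining contributions involve $f_1$: either a pure $|\nabla f|^2/f$ piece (arising from the Bochner gradient term, the $|\nabla f|^2/(2f)-u_{11}^2$ in the Kato bound, and the $\langle\nabla f,\nabla u\rangle^2/f^2=f_1^2/f$ identity) which can be arranged to be nonnegative and dropped, or cross products $f_1\cdot X$ with $X\in\{f,\,b_1 e^{b_2 u}f^{(q-p)/2+1},\,c_1 e^{c_2 u}f^{1-p/2}\}$ originating from the $-\frac{p-2}{2\sqrt f}f_1$ piece of $\Delta u$ and from the $\frac{p}{2}f^{p/2-1}\langle\nabla f,\nabla u\rangle$ and $\frac{q}{2}b_1 e^{b_2 u}f^{q/2-1}\langle\nabla f,\nabla u\rangle$ pieces of $f^{1-p/2}\langle\nabla\Delta_p u,\nabla u\rangle$. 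Apply Young's inequality $|XY|\leq\frac{d_i}{2\alpha-1}Y^2+\frac{2\alpha-1}{4d_i}X^2$ to absorb the cross products with $Y=b_1 e^{b_2 u}f^{(q-p)/2+1}$ and $Y=c_1 e^{c_2 u}f^{1-p/2}$ into small shares of the corresponding pure squares, leaving the coefficients $\frac{1}{n-1}-\frac{d_1}{2\alpha-1}$ and $\frac{1}{n-1}-\frac{d_2}{2\alpha-1}$, and collect the surviving $f_1$-residues into a single lower bound $-\frac{a_1}{2}f^{1/2}|\nabla f|$ with $a_1=a_1(n,p)$. I expect the main obstacle to be precisely this last bookkeeping step: the constants $d_1,d_2$ and the Young parameters must be tuned so that the principal coefficients $\frac{n+1}{n-1}-\frac{q}{p-1}$, $\frac{n+1}{n-1}-\frac{r}{p-1}$, and $\frac{1}{n-1}$ emerge untouched while every residual $f_1$-term collapses cleanly into one $f^{1/2}|\nabla f|$ contribution.
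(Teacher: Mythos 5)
Your proposal is correct and follows essentially the same route as the paper: normalize the Bochner identity by $2\alpha f^{\alpha+p/2-2}$, apply a refined Kato inequality in the frame $e_1=\nabla u/\sqrt f$, substitute the PDE for $\Delta u$ (equivalently $\sum_{i\ge 2}u_{ii}$) and for $\langle\nabla\Delta_p u,\nabla u\rangle$, expand $(\Delta u-u_{11})^2/(n-1)$ discarding the nonnegative cross term $2b_1c_1(\cdots)\ge 0$ via $bc\ge 0$, recognize the principal coefficients $b_2+\frac{2}{n-1}$ and $c_2+\frac{2}{n-1}$, and absorb the $u_{11}$-cross terms through Young's inequality against the $(2\alpha-1)(p-1)u_{11}^2$ margin, with the Ricci bound and the leftover $fu_{11}$ term producing the $-(n-1)\kappa f-\tfrac{a_1}{2}f^{1/2}|\nabla f|$ piece. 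The only (harmless) deviation is that you retain the slightly sharper Kato term $\frac{|\nabla f|^2}{2f}-u_{11}^2$ instead of the paper's $u_{11}^2$, which is then relaxed to the same $(2\alpha-1)(p-1)u_{11}^2$ coefficient; note also that, as the paper's own computation shows, $d_1$ in fact depends on $q$ as well as $n,p$, so the lemma's statement that $d_1,d_2$ depend only on $n,p$ is slightly imprecise, and the displayed fourth term on the right of \eqref{equa2.5} should carry a factor $b_1$.
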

	
\begin{proof}
Let $\{e_1,e_2,\ldots, e_n\}$ be an orthonormal frame of $TM$ on a domain with $f\neq 0$ such that $e_1=\frac{\nabla u}{|\nabla u|}$. We have $u_1 = f^{\frac{1}{2}}$ and
\begin{align}\label{add}
u_{11} = \frac{1}{2}f^{-\frac{1}{2}}f_1 = \frac{1}{2}f^{-1}\la\nabla u,\nabla f\ra.
\end{align}
It follows from (\ref{equf}) that
\begin{align}\label{equa:2.5}
\begin{split}
\la\nabla\Delta_p u,\nabla u\ra =&\ \la\nabla( f^{\frac{p}{2}}+b_1e^{b_2u}f^{\frac{q}{2}}+c_1e^{c_2u}),\nabla u\ra\\
=&\ pf^{\frac{p}{2} }u_{11}+b_1b_2e^{b_2u}f^{\frac{q}{2}+1} + qb_1 e^{b_2u}f^{\frac{q}{2} }u_{11}+c_1b_2e^{c_2u}f.
\end{split}
\end{align}
Using the fact $u_1 = f^{\frac{1}{2}}$ again, we have the identity
		\begin{align}
			\label{equ6}
			\sum_{i=1}^n |u_{1i}|^2 = \frac{|\nabla f|^2}{4f}.
		\end{align}
By (\ref{equ6}) and Cauchy inequality, we estimate the Hessian of $u$ by
		\begin{align}\label{equa:2.7}
			|\nabla\nabla u|^2
			\geq& \sum_{i=1}^n u_{1i}^2+\sum_{i=2}^n u_{ii}^2
			\geq u_{11}^2+\frac{1}{n-1}\Big(\sum_{i=2}^n u_{ii}\Big)^2.
		\end{align}
Substituting (\ref{add}), (\ref{equa:2.5}), (\ref{equ6}) and (\ref{equa:2.7}) into (\ref{bochner1}), we obtain
\begin{align}\label{equ211}
\begin{split}
\frac{f^{2-\alpha-\frac{p}{2}}}{2\alpha}\mathcal{L} (f^{\alpha})\geq &\
				(2\alpha-1)(p-1) u_{11}^2+ \ric(\nabla u,\nabla u)+\frac{1}{n-1}\Big(\sum_{i=2}^n u_{ii}\Big)^2\\
				&\
				+ f^{ 1-\frac{p}{2}}\left(pf^{\frac{p}{2} }u_{11}+b_1b_2e^{b_2u}f^{\frac{q}{2}+1}
				+ qb_1 e^{b_2u}f^{\frac{q}{2} }u_{11}
				+c_1b_2e^{c_2u}f\right).
\end{split}
\end{align}
		
On the region $\{x:f(x)\neq 0\}$, the $p$-Laplace operator can be expressed in terms of $f$ as
\begin{align}\label{equ:2.2}
\Delta_p u = &f^{\frac{p}{2}-1}\left((p-1)u_{11}+\sum_{i=2}^nu_{ii}\right).
\end{align}
Substituting (\ref{equ:2.2}) into equation (\ref{equf}) yields
\begin{align}\label{equ2}
\sum_{i=2}^nu_{ii}=f+ b_1e^{b_2u}f^{\frac{q-p}{2}+1}+c_1e^{c_2u}f^{1-\frac{p}{2}}-(p-1)u_{11}.
\end{align}
Expanding the square term directly, we obtain
\begin{align*}
			\Big(\sum_{i=2}^n u_{ii}\Big)^2
			=&\ f^2+ \left(b_1e^{b_2u}f^{\frac{q-p}{2}+1}\right)+\left(c_1e^{c_2u}f^{1-\frac{p}{2}}\right)^2+
			2b_1c_1e^{(b_2+c_2)u}f^{2-p+\frac{q}{2}}
			\\
			&
			+(p-1)^2u^2_{11}
			+2\left(b_1e^{b_2u}f^{\frac{q-p}{2}+2}+c_1e^{c_2u}f^{2-\frac{p}{2}}\right)
			-2(p-1)fu_{11}
			\\
			&-2(p-1)\left(b_1e^{b_2u}f^{\frac{q-p}{2}+1}+c_1e^{c_2u}f^{1-\frac{p}{2}}\right)u_{11}.
		\end{align*}
Noting $bc\geq 0$ implies $b_1c_1\geq 0$, so we can omit the two non-negative terms
$$2b_1c_1e^{(b_2+c_2)u}f^{2-p+\frac{q}{2}}\quad \text{and} \quad (p-1)^2u^2_{11}$$
to obtain
\begin{align*}
\Big(\sum_{i=2}^n u_{ii}\Big)^2\geq &\ f^2+ \left(b_1e^{b_2u}f^{\frac{q-p}{2}+1}\right)^2+\left(c_1e^{c_2u}f^{1-\frac{p}{2}}\right)^2-2(p-1)fu_{11}\\
			&+2\left(b_1e^{b_2u}f^{\frac{q-p}{2}+2}+c_1e^{c_2u}f^{2-\frac{p}{2}}\right)\\
			&-2(p-1)\left(b_1e^{b_2u}f^{\frac{q-p}{2}+1}+c_1e^{c_2u}f^{1-\frac{p}{2}}\right)u_{11}.
\end{align*}
Then it follows from the above inequality and \eqref{equ211} that
\begin{align}
\label{equ2.13}
\begin{split}
\frac{f^{2-\alpha-\frac{p}{2}}}{2\alpha}\mathcal{L} (f^{\alpha})\geq &\
				\left(2\alpha-1\right)(p-1) u_{11}^2 + \ric(\nabla u,\nabla u)
				+\left(p-\frac{2(p-1)}{n-1}\right)f u_{11}\\
				&+ b_1\left(b_2+\frac{2}{n-1}\right)e^{b_2u}f^{\frac{q-p}{2}+2}
				+\frac{1}{n-1}\Big(f^2+  \left(b_1e^{b_2u}f^{\frac{q-p}{2}+1}\right)^2\\
				&+\left(c_1e^{c_2u}f^{1-\frac{p}{2}}\right)^2
				-2(p-1)  c_1e^{c_2u}f^{1-\frac{p}{2}} u_{11}\Big)\\
				&+c_1\left(c_2+\frac{2}{n-1}\right)e^{c_2u}f^{2-\frac{p}{2}}
				+ b_1\left(q-\frac{2(p-1)}{n-1}\right) e^{b_2u}f^{\frac{q-p}{2}+1 }u_{11}.
\end{split}
\end{align}
By the basic inequality  $\mu^2+2\mu\nu\geq -\nu^2$, we obtain
\begin{align}\label{equ213}
\begin{split}
				&\frac{1}{2}\left(2\alpha-1\right)(p-1) u_{11}^2
				+ b_1\left(q-\frac{2(p-1)}{n-1}\right) e^{a_2u}f^{\frac{q-p}{2}+1 }u_{11}\\
				\geq & -\frac{d_1}{2\alpha-1}\left(b_1e^{a_2u}f^{\frac{q-p}{2}+1}\right)^2,\\
\end{split}
\end{align}	
and
\begin{align}\label{equ2.14}
			&\frac{1}{2}\left(2\alpha-1\right)(p-1) u_{11}^2
				-\frac{2(p-1)}{n-1}c_1 e^{b_2u}f^{1-\frac{p}{2} }u_{11}
				\geq -\frac{d_2}{2\alpha-1}\left(c_1e^{b_2u}f^{1-\frac{p}{2}}\right)^2,
\end{align}
where the constants $d_1$ and $d_2$ are given by
\begin{align*}
d_1=\frac{1}{2(p-1)}\left(q-\frac{2(p-1)}{n-1}\right)^2\quad\text{and}\quad d_2 = \frac{2(p-1)}{(n-1)^2}.
\end{align*}
Now, if we denote $a_1 = \left|p-\frac{2(p-1)}{n-1}\right|$, then by Ricci curvature bound and \eqref{add} we have
		$$
		\mathrm{Ric}(\nabla u, \nabla u)+\left(p-\frac{2(p-1)}{n-1}\right)f u_{11}\geq -2(n-1)\kappa f-\frac{a_1}{2}f^{\frac{1}{2}}|\nabla f|.
		$$
It follows from \eqref{equ2.13}, \eqref{equ213}, \eqref{equ2.14} and the above equality that
\begin{align*}
\frac{f^{2-\alpha-\frac{p}{2}}}{2\alpha}\mathcal{L} (f^{\alpha})\geq &\
			\frac{f^2}{n-1}-(n-1)\kappa f -\frac{a_1}{2}f^{\frac{1}{2}}|\nabla f|
			+ \left(b_2+\frac{2}{n-1}\right)e^{b_2u}f^{\frac{q-p}{2}+2}\\
			&
			+c_1\left(c_2+\frac{2}{n-1}\right)e^{c_2u}f^{2-\frac{p}{2}}
			+\Big(\frac{1}{n-1} -\frac{d_1}{2\alpha-1}  \Big)\left(b_1e^{b_2u}f^{\frac{q-p}{2}+1}\right)^2\\
			& +\Big(\frac{1}{n-1}-\frac{d_2}{2\alpha-1}  \Big)\left(c_1e^{c_2u}f^{1-\frac{p}{2}}\right)^2.
\end{align*}
Substituting $b_2=\frac{p-1-q}{p-1}$ and $c_2=\frac{r-1-q}{p-1}$ into the above estimate, we obtain the required estimate. Thus, we finish the proof.
\end{proof}
	
Since $d_1$ and $d_2$ are both constants depending on $n, \, p$ and $q$, there exists some constant $\alpha_1>0$ large enough such that, for any $\alpha\geq \alpha_1$, there hold
	$$
	\frac{1}{n-1} -\frac{d_1}{2\alpha-1}> 0\quad\text{and}\quad \frac{1}{n-1} -\frac{d_2}{2\alpha-1} > 0.
	$$
From now on, we always assume that $\alpha\geq\alpha_1$.
	
\begin{lem}\label{lem2.5}
Let $u\in C^1(\Omega)$ be a solution of \eqref{equ1}. If $bc\geq 0$ and $(b, c,  q, r)\in W_1\cup W_2\cup W_3\cup W_4$, where
		\begin{align*}
			W_1=&\left\{(b, c,  q, r): b\left(\frac{n+1}{n-1}-\frac{q}{p-1}\right)\geq 0 \quad \text{and}\quad
			c\left(\frac{n+1}{n-1}-\frac{r}{p-1}\right)\geq 0 \right\}	 ;
			\\
			W_2=&\left\{(b, c,  q, r): b\left(\frac{n+1}{n-1}-\frac{q}{p-1}\right)\geq 0 \quad \text{and}\quad
			\left|\frac{n+1}{n-1}-\frac{r}{p-1}\right|<\frac{2}{n-1} \right\};
			\\
			W_3=&\left\{(b, c, q, r):
			\left|\frac{n+1}{n-1}-\frac{q}{p-1}\right|<\frac{2}{n-1}  \quad \text{and}\quad
			c\left(\frac{n+1}{n-1}-\frac{r}{p-1}\right)\geq 0 \right\};
			\\
			W_4=&\left\{(b, c, q, r):
			\left( \frac{n+1}{n-1}-\frac{q}{p-1}\right)^2
			+\left(\frac{n+1}{n-1}-\frac{r}{p-1}\right)^2
			<\frac{4}{(n-1)^2}\right\},
\end{align*}
then there exists large enough $\alpha_0$ and some $\beta_0>0$ such that
\begin{align}\label{equa2.20}
\mathcal{L} (f^{\alpha_0})\geq &\ 2\alpha_0\beta_0f^{ \alpha_0+\frac{p}{2}}
			-2(n-1)\alpha_0f^{\alpha_0+\frac{p}{2}-1}
			-\alpha_0a_1f^{ \alpha_0+\frac{p-3}{2}}|\nabla f|,
\end{align}
where $\alpha_0$ and $\beta_0$ depend only on $n,\, p,\, q$ and $r$. The definitions of $\alpha_0$ and $\beta_0$ may be different from each other in $W_1$, $W_2$, $W_3$ and $W_4$.
\end{lem}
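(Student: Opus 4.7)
The strategy is to multiply \eqref{equa2.5} through by $2\alpha f^{\alpha+p/2-2}$, which converts the left-hand side into $\mathcal{L}(f^{\alpha})$ and turns the three ``geometric'' terms $\tfrac{f^{2}}{n-1}$, $-(n-1)\kappa f$ and $-\tfrac{a_{1}}{2}f^{1/2}|\nabla f|$ into exactly the three pieces on the right-hand side of \eqref{equa2.20}, with leading coefficient $\tfrac{2\alpha}{n-1}$ in front of $f^{\alpha+p/2}$. The job is then to show that the four ``reaction'' terms (the two middle terms with coefficients $b_{1}(\tfrac{n+1}{n-1}-\tfrac{q}{p-1})$ and $c_{1}(\tfrac{n+1}{n-1}-\tfrac{r}{p-1})$, together with the two square terms involving $b_{1}^{2}$ and $c_{1}^{2}$) can be absorbed while leaving a strictly positive residue $\beta_{0}$ in the coefficient of $f^{\alpha+p/2}$.

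The key tool is the elementary inequality $Ay^{2}+By\geq-B^{2}/(4A)$ for $A>0$. I choose $\alpha_{0}$ so large that
\[
A_{b}:=\tfrac{1}{n-1}-\tfrac{d_{1}}{2\alpha_{0}-1}>0,\qquad A_{c}:=\tfrac{1}{n-1}-\tfrac{d_{2}}{2\alpha_{0}-1}>0,
\]
and both are as close to $\tfrac{1}{n-1}$ as required. Setting $y_{b}=b_{1}e^{b_{2}u}f^{(q-p)/2+1}$, the $b$-pair (middle $+$ square) reads $A_{b}y_{b}^{2}+B_{b}y_{b}$ with $B_{b}=(\tfrac{n+1}{n-1}-\tfrac{q}{p-1})f$; analogously, setting $y_{c}=c_{1}e^{c_{2}u}f^{1-p/2}$ produces the $c$-pair with $B_{c}=(\tfrac{n+1}{n-1}-\tfrac{r}{p-1})f$. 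Completing the square, each pair contributes at least $-\tfrac{B^{2}}{4A}$, a multiple of $f^{2}$; alternatively, when the middle term has the ``good'' sign and $A>0$ the pair is non-negative and can simply be dropped.

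Now handle the four regions. On $W_{1}$, both middle terms are non-negative (by the sign hypotheses) and both square terms are non-negative, so all four reaction terms are dropped and $\beta_{0}=\tfrac{1}{n-1}$. On $W_{2}$, the $b$-pair is dropped by the sign condition, while the $c$-pair is absorbed via completion of squares with loss $\tfrac{B_{c}^{2}}{4A_{c}}f^{2}$; the strict inequality $|\tfrac{n+1}{n-1}-\tfrac{r}{p-1}|<\tfrac{2}{n-1}$ together with $A_{c}\to\tfrac{1}{n-1}$ as $\alpha_{0}\to\infty$ ensures this loss stays strictly below $\tfrac{f^{2}}{n-1}$, giving a positive $\beta_{0}$. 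Region $W_{3}$ is symmetric. On $W_{4}$ both pairs are absorbed, and the total loss is arbitrarily close to $\tfrac{n-1}{4}\bigl[(\tfrac{n+1}{n-1}-\tfrac{q}{p-1})^{2}+(\tfrac{n+1}{n-1}-\tfrac{r}{p-1})^{2}\bigr]$, which is strictly less than $\tfrac{1}{n-1}$ precisely by the $W_{4}$ hypothesis. In every case the coefficient of $f^{\alpha+p/2}$ is at least $2\alpha_{0}\beta_{0}$ for some $\beta_{0}>0$ depending only on $n,p,q,r$, yielding \eqref{equa2.20}.

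The only subtle point is the uniformity of the choice of $\alpha_{0}$: one must verify that $\alpha_{0}$ can be taken large enough simultaneously to make $A_{b},A_{c}$ positive and to bring them close enough to $\tfrac{1}{n-1}$ that the (strictly positive) slack in the inequalities defining $W_{2},W_{3},W_{4}$ is preserved. Since $d_{1},d_{2}$ depend only on $n,p,q$, this is straightforward. The remainder is algebraic bookkeeping following the identifications above.
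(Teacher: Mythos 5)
Your proposal is correct and follows essentially the same route as the paper's proof: you take the inequality of Lemma 2.4, observe that the two square terms $R_6,R_7$ are nonnegative once $\alpha_0$ is large, and absorb the mixed $b$- and $c$-reaction terms by the quadratic inequality $Ay^2+By\geq -B^2/(4A)$ (the paper writes this as $\mu^2+2\mu\nu\geq-\nu^2$), dropping a pair outright when its sign is favourable ($W_1,W_2,W_3$) and completing the square on the remaining pair(s), with the strict inequalities defining $W_2,W_3,W_4$ guaranteeing a positive residue $\beta_0$ after sending $A_b,A_c\to 1/(n-1)$. The only cosmetic difference is that you package the middle and square terms explicitly as $A y^2+By$ with $y=b_1e^{b_2u}f^{(q-p)/2+1}$ or $y=c_1e^{c_2u}f^{1-p/2}$, which is precisely the substitution the paper makes implicitly.
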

	
\begin{proof}
We use $R_i$ to denote the $i$-th term in the right hand side of the estimate in \eqref{equa2.5}.  Since $\alpha\geq\alpha_1$, it is easy to see that $R_6$ and $R_7$ are non-negative.
\begin{enumerate}
\item If $(b, c, q, r)\in W_1$, then $R_4$ and $R_5$ are non-negative. Omitting non-negative terms $R_4$, $R_5$, $R_6$ and $R_7$ yields
\begin{align}\label{w1}
\frac{f^{2-\alpha-\frac{p}{2}}}{2\alpha}\mathcal{L} (f^{\alpha})\geq\frac{f^2}{n-1}-(n-1)\kappa f-\frac{a_1}{2}f^{\frac{1}{2}}|\nabla f|.
\end{align}
\item If $(b, c,  q, r)\in W_2$, then $R_4$ is non-negative. By discarding $R_4$ and $R_6$, we obtain
\begin{align*}
\frac{f^{2-\alpha-\frac{p}{2}}}{2\alpha}\mathcal{L} (f^{\alpha})\geq &\ \frac{f^2}{n-1}-(n-1)\kappa f
+\left(p-\frac{2(p-1)}{n-1}\right)f u_{11}\\
&+c_1\left( \frac{n+1}{n-1}-\frac{r}{p-1}\right)e^{c_2u}f^{2-\frac{p}{2}}\\
&+\Big(\frac{1}{n-1}-\frac{d_2}{2\alpha-1}\Big)\left(c_1e^{c_2u}f^{1-\frac{p}{2}}\right)^2.
\end{align*}
By the fact $\mu^2+2\mu\nu\geq -\nu^2$, we have
\begin{align*}
&c_1\left(\frac{n+1}{n-1}-\frac{r}{p-1}\right)e^{c_2u}f^{2-\frac{p}{2}}+\Big(\frac{1}{n-1} -\frac{d_2}{2\alpha-1} \Big)\left(c_1e^{c_2u}f^{1-\frac{p}{2}}\right)^2\\
\geq &-f^2\frac{\left(\frac{n+1}{n-1}-\frac{r}{p-1}\right)^2}{\frac{4}{n-1} -\frac{4d_2}{2\alpha-1}}.
\end{align*}
It follows
			\begin{align*}
				\frac{f^{2-\alpha-\frac{p}{2}}}{2\alpha}\mathcal{L} (f^{\alpha})
				\geq &\
				\left(\frac{1}{n-1}-\frac{\left(\frac{n+1}{n-1}-\frac{r}{p-1}\right)^2}{\frac{4}{n-1} -\frac{4d_2}{2\alpha-1} }\right)f^2-(n-1)\kappa f
				-\frac{a_1}{2}f^{\frac{1}{2}}|\nabla f|.
			\end{align*}
Note that the condition $\left|\frac{n+1}{n-1}-\frac{r}{p-1}\right|<\frac{2}{n-1}$ implies
$$
\lim_{\alpha\to\infty}\frac{1}{n-1}-\frac{\left(\frac{n+1}{n-1}-\frac{r}{p-1}\right)^2}{\frac{4}{n-1} -\frac{4d_2}{2\alpha-1} }= \frac{1}{n-1}- \frac{n-1}{4}\left(c_2+\frac{2}{n-1}\right)^2 >0.
$$
So we can choose $\alpha_2$ large enough such that, for $\alpha\geq \alpha_2$, there holds true
$$
B^1_{n, p, q, r,\alpha}:=\frac{1}{n-1}-\frac{\left(\frac{n+1}{n-1}-\frac{r}{p-1}\right)^2}{\frac{4}{n-1} -\frac{4d_2}{2\alpha-1} }>0.
$$
It follows that
\begin{align}\label{w2}
\frac{f^{2-\alpha-\frac{p}{2}}}{2\alpha}\mathcal{L} (f^{\alpha})\geq &\ B^1_{n, p, q, r,\alpha}f^2-(n-1)\kappa f
-\frac{a_1}{2}f^{\frac{1}{2}}|\nabla f|.
\end{align}
			
\item If $(b, c, q, r)\in W_3$,  then $R_6$  is non-negative. Note that $q$ and $r$ are in relatively symmetric positions.
If we repeat the procedure in  the above item, we can infer that there exists $\alpha_3>0$ such that for $\alpha\geq\alpha_3$,
\begin{align}\label{w3}
\frac{f^{2-\alpha-\frac{p}{2}}}{2\alpha}\mathcal{L} (f^{\alpha})
\geq &\ B^2_{n,p,q,r,\alpha}f^2-(n-1)\kappa f -\frac{a_1}{2}f^{\frac{1}{2}}|\nabla f|
\end{align}
where
$$
B^2_{n, p, q, r,\alpha}:=\frac{1}{n-1}-\frac{\left(\frac{n+1}{n-1}-\frac{q}{p-1}\right)^2}{\frac{4}{n-1} -\frac{4d_1}{2\alpha-1} }>0
$$
is a positive constant and depends only on $n,\,p,\,q,\,r$ and $\alpha$.

\item In the case $(b, c, q, r)\in W_4$, by using the inequality $x^2+2xy\geq -y^2$ twice, we have
\begin{align*}
&b_1\left(\frac{n+1}{n-1}-\frac{q}{p-1}\right)e^{b_2u}f^{\frac{q-p}{2}+2}+\Big(\frac{1}{n-1} -\frac{d_1}{2\alpha-1}\Big)\left(b_1e^{b_2u}f^{\frac{q-p}{2}+1}\right)^2\\
\geq& - f^2\frac{\left(\frac{n+1}{n-1}-\frac{q}{p-1}\right)^2}{\frac{4}{n-1} -\frac{d_1}{2\alpha-1}},\\
\mbox{and}\\
&c_1\left(\frac{n+1}{n-1}-\frac{r}{p-1}\right)e^{c_2u}f^{2-\frac{p}{2}}+\Big(\frac{1}{n-1} -\frac{d_2}{2\alpha-1}  \Big)\left(c_1e^{c_2u}f^{1-\frac{p}{2}}\right)^2\\
\geq & -f^2\frac{\left(\frac{n+1}{n-1}-\frac{r}{p-1}\right)^2}{\frac{4}{n-1} -\frac{d_2}{2\alpha-1}}.
\end{align*}
It follows from the above two inequalities that
			\begin{align}\label{w4}
				\frac{f^{2-\alpha-\frac{p}{2}}}{2\alpha}\mathcal{L} (f^{\alpha})
				\geq &\
				B^3_{n, p, q, r,\alpha}f^2-(n-1)\kappa f
				-\frac{a_1}{2}f^{\frac{1}{2}}|\nabla f|,
			\end{align}
where
$$
B^3_{n, p, q, r,\alpha}:=\frac{1}{n-1}-\frac{\left(\frac{n+1}{n-1}-\frac{q}{p-1}\right)^2}{\frac{4}{n-1} -\frac{4d_1}{2\alpha-1}}
-\frac{\left(\frac{n+1}{n-1}-\frac{r}{p-1}\right)^2}{\frac{4}{n-1} -\frac{4d_2}{2\alpha-1}}.
$$
We can infer from the definition of $W_4$ that
\begin{align*}
\lim_{\alpha\to0}B^3_{n, p, q, r,\alpha}=\frac{1}{n-1}- \frac{n-1}{4}\left(\left(\frac{n+1}{n-1} -\frac{q}{p-1}\right)^2+\left(\frac{n+1}{n-1}-\frac{r}{p-1}\right)^2\right)>0,
\end{align*}
so there exists $\alpha_4\geq 0$ such that there holds $B^3_{n, p, q, r,\alpha}>0$ if $\alpha\geq\alpha_4$.
\end{enumerate}

We denote
		$$
		\alpha_0 :=\alpha_0(n,p,q,r)=
		\begin{cases}
			\alpha_1, & \text{$(b, c, q, r)$ lies in $W_1$} ;\\
			\alpha_2 , &\text{$(b, c,  q, r)$ lies in $W_2\setminus W_1$} ; \\
			\alpha_3, &\text{$(b, c, q, r)$ lies in $W_3\setminus W_1$}; \\
			\alpha_4,  & \text{$(b, c, q, r)$ lies in $W_4\setminus (W_1\cup W_2\cup W_3)$},
		\end{cases}
		$$
and
		$$
		\beta_0 :=\beta_0(n,p,q,r)=
		\begin{cases}
			\frac{2}{n-1}, & \text{$(b, c,  q, r)$ lies in $W_1$} ;\\
			B^1_{n,p,q,r,\alpha_2} ,&\text{$(b, c,  q, r)$ lies in $W_2\setminus W_1$} ; \\
			B^2_{n,p,q,r,\alpha_3} ,  &\text{$(b, c,   q, r)$ lies in $W_3\setminus W_1$}; \\
			B^3_{n,p,q,r,\alpha_4} ,  & \text{$(b, c,  q, r)$ lies in $W_4\setminus (W_1\cup W_2\cup W_3)$}.
		\end{cases}
		$$
Thus we finish the proof.
\end{proof}
	
Since the estimate for $\mL(f^\alpha)$ in \lemref{lem2.5} depends only on the signs of $b$ and $c$, we can obtain directly the following
\begin{cor}\label{21}
If $b>0$, $c>0$ and $(q,r)$ satisfies
		$$
		\frac{q}{p-1}\leq \frac{n+1}{n-1} \quad \text{and}\quad
		\frac{r}{p-1} <\frac{n+3}{n-1},
		$$
or
		$$
		\frac{r}{p-1}\leq \frac{n+1}{n-1} \quad \text{and}\quad
		\frac{q}{p-1} <\frac{n+3}{n-1},
		$$
or
		$$
		\left( \frac{n+1}{n-1}-\frac{q}{p-1}\right)^2
		+\left(\frac{n+1}{n-1}-\frac{r}{p-1}\right)^2
		<\frac{4}{(n-1)^2},
		$$
then the following estimate
\begin{align*}
			\mathcal{L} (f^{\alpha_0 })
			\geq&
			2\alpha_0 \beta_{0} f^{\alpha_0 +\frac{p}{2} }
			-2\alpha_0 (n-1)\kappa  f^{\alpha_0 +\frac{p}{2}-1}
			- \alpha_0  a_1|\nabla f|f^{\alpha_0 +\frac{p-3}{2}}
\end{align*}
holds true on $\{x:f(x)\neq 0\}$.
\end{cor}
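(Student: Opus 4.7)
The plan is to verify that each of the three $(q,r)$-conditions listed in the statement, combined with the assumption $b>0$ and $c>0$, forces the quadruple $(b,c,q,r)$ to lie in the union $W_1\cup W_2\cup W_3\cup W_4$ that appears in \lemref{lem2.5}, and then to invoke that lemma directly. Since the conclusion of \lemref{lem2.5} is precisely the displayed inequality in the corollary (with the same constants $\alpha_0$, $\beta_0$, $a_1$), once the membership is established there is nothing left to prove.

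First I would unpack the definitions of $W_1,\dots,W_4$ in the positive sign regime. When $b>0$, the factor $b(\tfrac{n+1}{n-1}-\tfrac{q}{p-1})\geq 0$ simply reduces to $\tfrac{q}{p-1}\leq \tfrac{n+1}{n-1}$, and similarly the condition involving $c>0$ reduces to $\tfrac{r}{p-1}\leq \tfrac{n+1}{n-1}$. This turns the abstract regions $W_1,W_2,W_3$ into explicit pointwise conditions on $(\tfrac{q}{p-1},\tfrac{r}{p-1})$, while $W_4$ is already intrinsic.

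Next I would go through the three cases. For condition (1), $\tfrac{q}{p-1}\leq \tfrac{n+1}{n-1}$ already gives the first inequality defining $W_1$ and $W_2$; if in addition $\tfrac{r}{p-1}\leq \tfrac{n+1}{n-1}$ we are in $W_1$, while if $\tfrac{n+1}{n-1}<\tfrac{r}{p-1}<\tfrac{n+3}{n-1}$ then $|\tfrac{n+1}{n-1}-\tfrac{r}{p-1}|<\tfrac{2}{n-1}$, which places us in $W_2$. By the $(q,r)$-symmetric argument, condition (2) puts $(b,c,q,r)$ in $W_1\cup W_3$. Condition (3) is verbatim the definition of $W_4$.

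Since in all three cases $(b,c,q,r)\in W_1\cup W_2\cup W_3\cup W_4$ and $bc>0\geq 0$, \lemref{lem2.5} applies and yields the claimed inequality on $\{f\neq 0\}$ with the prescribed $\alpha_0=\alpha_0(n,p,q,r)$ and $\beta_0=\beta_0(n,p,q,r)$. There is no essential obstacle here; the only care required is the bookkeeping check that the strict versus non-strict inequalities in the corollary's hypotheses are consistent with those defining $W_1,\dots,W_4$, so that no boundary case is missed when partitioning condition (1) (and analogously (2)) along $\tfrac{r}{p-1}=\tfrac{n+1}{n-1}$.
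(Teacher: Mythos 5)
Your proof is correct and follows essentially the same route as the paper: when $b>0$ and $c>0$, translate the sign conditions defining $W_1,\dots,W_4$ into explicit inequalities on $\frac{q}{p-1}$ and $\frac{r}{p-1}$, check that condition (1) places $(b,c,q,r)$ in $W_1\cup W_2$, condition (2) in $W_1\cup W_3$, condition (3) in $W_4$, and then invoke \lemref{lem2.5}. The partitioning along $\frac{r}{p-1}=\frac{n+1}{n-1}$ that you flag as the point requiring care is handled exactly as you describe, and it matches the paper's treatment.
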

	
\begin{proof}
Under the assumption $b>0$ and $c>0$ (so $b_1>0$ and $c_1>0$), it is easy to see that $(b, c, q, r)$ belonging to the union $W_1\cup W_2$, where $W_1$ and $W_2$ are defined in \lemref{lem2.5}, is equivalent to that $ q$ and $r$ fulfill
		$$
		q\leq\frac{n+1}{n-1}(p-1)\quad\mbox{and}\quad  r<\frac{n+3}{n-1}(p-1);
		$$
$(b, c, q, r)$ belonging to the union $W_1\cup W_3$ is equivalent to that $q$ and $r$ satisfy
		$$
		q<\frac{n+3}{n-1}(p-1)\quad\mbox{and}\quad r\leq\frac{n+1}{n-1}(p-1);
		$$
and $(b, c, q, r)$ belonging to $W_4$ is just
		$$
		\left(\frac{n+1}{n-1}-\frac{q}{p-1}\right)^2+\left(\frac{n+1}{n-1}-\frac{r}{p-1}\right)^2<
		\frac{4}{(n-1)^2}.
		$$
\end{proof}

Analogous to the above theorem, we can obtain that
\begin{cor}\label{27}
If $b<0$ and $c<0$ and $(q, r)$ satisfies
		$$
		\frac{q}{p-1}\geq \frac{n+1}{n-1} \quad\quad \text{and}\quad\quad
		\frac{r}{p-1} >1,
		$$
or
		$$
		\frac{r}{p-1}\geq \frac{n+1}{n-1} \quad\quad \text{and}\quad\quad
		\frac{q}{p-1}>1,
		$$
or
		$$
		\left( \frac{n+1}{n-1}-\frac{q}{p-1}\right)^2
		+\left(\frac{n+1}{n-1}-\frac{r}{p-1}\right)^2
		<\frac{4}{(n-1)^2},
		$$
then the following estimate
		\begin{align*}
			\mathcal{L} (f^{\alpha_0 })
			\geq&
			2\alpha_0 \beta_{0} f^{\alpha_0 +\frac{p}{2} }
			-2\alpha_0 (n-1)\kappa  f^{\alpha_0 +\frac{p}{2}-1 }
			- \alpha_0  a_1   |\nabla f|f^{\alpha_0 +\frac{p-3}{2} }
		\end{align*}
holds true on $\{x:f(x)\neq 0\}$.
\end{cor}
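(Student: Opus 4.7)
The plan is to deduce this corollary directly from \lemref{lem2.5} (and the pointwise inequality \eqref{equa2.20}), in exactly the same way \corref{21} is deduced from it. The only thing I actually need to do is verify that the three explicit conditions on $(q,r)$ listed in the statement, under the sign hypothesis $b<0$ and $c<0$, together parametrize the union $W_1\cup W_2\cup W_3\cup W_4$.

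First, I would translate the defining conditions of the sets $W_i$ when $b<0$ and $c<0$. Since $b<0$, the inequality $b\bigl(\frac{n+1}{n-1}-\frac{q}{p-1}\bigr)\geq 0$ is equivalent to $\frac{q}{p-1}\geq\frac{n+1}{n-1}$, and similarly $c\bigl(\frac{n+1}{n-1}-\frac{r}{p-1}\bigr)\geq 0$ is equivalent to $\frac{r}{p-1}\geq\frac{n+1}{n-1}$. Also, the two-sided condition $\bigl|\frac{n+1}{n-1}-\frac{r}{p-1}\bigr|<\frac{2}{n-1}$ is just $1<\frac{r}{p-1}<\frac{n+3}{n-1}$, and analogously for $q$. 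Hence under $b<0,c<0$,
\begin{align*}
W_1 &= \Bigl\{\tfrac{q}{p-1}\geq\tfrac{n+1}{n-1}\ \text{and}\ \tfrac{r}{p-1}\geq\tfrac{n+1}{n-1}\Bigr\},\\
W_2 &= \Bigl\{\tfrac{q}{p-1}\geq\tfrac{n+1}{n-1}\ \text{and}\ 1<\tfrac{r}{p-1}<\tfrac{n+3}{n-1}\Bigr\},\\
W_3 &= \Bigl\{1<\tfrac{q}{p-1}<\tfrac{n+3}{n-1}\ \text{and}\ \tfrac{r}{p-1}\geq\tfrac{n+1}{n-1}\Bigr\},
\end{align*}
and $W_4$ is unchanged since it does not involve the signs of $b,c$.

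Next, I would check that each of the three listed conditions in the corollary matches one of the unions $W_1\cup W_2$, $W_1\cup W_3$, or $W_4$. Indeed, the first condition $\frac{q}{p-1}\geq\frac{n+1}{n-1}$ and $\frac{r}{p-1}>1$ splits into two cases according to whether $\frac{r}{p-1}\geq\frac{n+1}{n-1}$ (giving $W_1$) or $1<\frac{r}{p-1}<\frac{n+1}{n-1}$; in the latter case one has $\bigl|\frac{n+1}{n-1}-\frac{r}{p-1}\bigr|<\frac{n+1}{n-1}-1=\frac{2}{n-1}$, so it lies in $W_2$. By symmetry in $q$ and $r$, the second condition gives exactly $W_1\cup W_3$, and the third condition is $W_4$ verbatim. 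Thus the hypotheses place $(b,c,q,r)$ in $W_1\cup W_2\cup W_3\cup W_4$.

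Finally, invoking \lemref{lem2.5} with this membership supplies constants $\alpha_0=\alpha_0(n,p,q,r)$ and $\beta_0=\beta_0(n,p,q,r)$ and the pointwise inequality \eqref{equa2.20} on the set $\{f\neq 0\}$, which is precisely the estimate asserted in the corollary. Since this is essentially a case-splitting bookkeeping argument, I do not expect any substantive obstacle; the only point to be careful about is the transition case in the first (resp.\ second) condition where $\frac{r}{p-1}$ (resp.\ $\frac{q}{p-1}$) lies strictly between $1$ and $\frac{n+1}{n-1}$, which is precisely what forces $W_2$ (resp.\ $W_3$) to appear alongside $W_1$.
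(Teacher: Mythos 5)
Your proposal is correct and matches exactly the approach the paper takes: it deduces Corollary 2.1 by translating the defining conditions of $W_1,\dots,W_4$ under the sign hypothesis and showing the stated $(q,r)$-conditions parametrize $W_1\cup W_2$, $W_1\cup W_3$, and $W_4$, then invoking Lemma 2.5. For Corollary 2.7 the paper merely says ``Analogous to the above theorem,'' and your proof is precisely the analogous bookkeeping (with the inequalities on $q/(p-1)$ and $r/(p-1)$ flipped because $b,c<0$), correctly carried out.
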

	
\section{Gradient estimate for the solutions to (\ref{equ0})}\label{sect3}
In this section, we divide the proof of our main theorem, i.e., Theorem \ref{t1}, into three parts. In the first part, we need to derive a basic integral inequality of $f=|\nabla u|^2$, which will be used in the second and third parts. In the second part, we show how to pick some fixed $\beta>0$ and give an $L^{\beta}$ estimate of $f$ on a ball with radius $3R/4$. This $L^{\beta}$ bound of $f$ will be chosen as the initial integral of the Moser iteration. Finally, we will give a complete proof of our theorem using the Moser iteration method.
	
\subsection{Integral inequality on the solutions to (\ref{equ0}): the case $\dim(M)>2$}
\begin{lem}\label{lem3.2}
Let $(M,g)$ be an $n$-dim complete Riemannian manifold with $\ric_g\geq -(n-1)\kappa g$ where $\kappa$ is a non-negative constant and $\Omega = B_R(o)\subset M$ be a geodesic ball. Assume that $v$ is a positive solution to the equation (\ref{equ0}) with constants $p,\,q$ and $r$ which satisfy the condition in \thmref{t1}(or \lemref{lem2.5}), $u = -(p-1)\ln v$ and $f=|\nabla u|^2$. Then there exist constants $a_3,\, a_4$ and $a_5$ depending only on $n$, $p$, $q$ and $r$ such that for any $t\geq t_0 $, where $t_0$ is defined in \eqref{equa2.8}, there holds
\begin{align*}
\begin{split}
&\beta_0\int_\Omega f^{\alpha_0+\frac{p}{2}+t}\eta^2 + \frac{a_3}{ t }e^{-t_0}V^{\frac{2}{n}}R^{-2}\left\|f^{\frac{\alpha_0+t-1}{2}+\frac{p}{4} }\eta\right\|_{L^{\frac{2n}{n-2}}}^2\\
\leq\  & a_5t_0^2R^{-2} \int_\Omega f^{\alpha_0+\frac{p}{2}+t-1}\eta^2+\frac{a_4}{t }\int_\Omega f^{\alpha_0+\frac{p}{2}+t-1}|\nabla\eta|^2,
\end{split}
\end{align*}
where $\eta\in C^{\infty}_0(\Omega,\bR)$.
\end{lem}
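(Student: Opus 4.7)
The plan is to test the pointwise differential inequality from \lemref{lem2.5} against $f^t \eta^2$ (where $\eta \in C_0^{\infty}(\Omega)$ is a cutoff) and integrate. The operator $\mL$ is in divergence form, so integration by parts produces a coercive $|\nabla f|^2$ term, which together with Saloff-Coste's Sobolev inequality will yield the desired integral bound. The proof is a standard Caccioppoli-type calculation, but the book-keeping of the $t$-dependence and the $\kappa$-dependence (through $t_0$) has to be done carefully.

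First I would compute, using $\mL(\psi) = \di(f^{p/2-1}\nabla \psi + (p-2)f^{p/2-2}\langle \nabla \psi,\nabla u\rangle \nabla u)$ and $\nabla(f^t\eta^2) = tf^{t-1}\eta^2 \nabla f + 2f^t\eta\nabla\eta$, the identity
\begin{align*}
\int_\Omega f^t \eta^2 \mL(f^{\alpha_0}) = &-\alpha_0 t \int_\Omega f^{\alpha_0+\frac{p}{2}+t-3}\bigl(|\nabla f|^2 + (p-2)f^{-1}\langle \nabla f,\nabla u\rangle^2\bigr)\eta^2 \\
&- 2\alpha_0 \int_\Omega f^{\alpha_0+\frac{p}{2}+t-2}\bigl(\eta\langle \nabla f,\nabla \eta\rangle + (p-2)f^{-1}\eta \langle \nabla f,\nabla u\rangle\langle \nabla u,\nabla \eta\rangle\bigr).
\end{align*}
Since $\langle \nabla f,\nabla u\rangle^2 \leq f|\nabla f|^2$, the first integrand is bounded below by $\min(1,p-1)|\nabla f|^2$ and the second in absolute value by $\max(1,|p{-}1|)\eta |\nabla f||\nabla \eta|$. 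Combining with the pointwise bound \eqref{equa2.20} yields a Caccioppoli-type inequality
\begin{align*}
2\alpha_0\beta_0 \int_\Omega f^{\alpha_0+\frac{p}{2}+t}\eta^2 + C_1 \alpha_0 t \int_\Omega f^{\alpha_0+\frac{p}{2}+t-3}|\nabla f|^2\eta^2 \leq \; & 2(n-1)\alpha_0 \kappa \int_\Omega f^{\alpha_0+\frac{p}{2}+t-1}\eta^2 \\
& + \alpha_0 a_1 \int_\Omega f^{\alpha_0+\frac{p-3}{2}+t}|\nabla f|\eta^2 \\
& + C_2 \alpha_0 \int_\Omega f^{\alpha_0+\frac{p}{2}+t-2}\eta |\nabla f||\nabla \eta|.
\end{align*}

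Next I would apply Young's inequality to both $|\nabla f|$-terms on the right, with the weight chosen so the resulting $|\nabla f|^2$-pieces can be absorbed into the coercive term on the left (taking $\varepsilon \sim 1/t$ for the cross term and a smaller $\varepsilon$ for the $a_1$-term). The residuals produced are of three types: a multiple of $\int f^{\alpha_0+p/2+t}\eta^2$ of size $\alpha_0 a_1^2 / t$, a multiple of $\int f^{\alpha_0+p/2+t-1}\eta^2$ coming from the Ricci term (of size $\kappa \alpha_0$), and a multiple of $\int f^{\alpha_0+p/2+t-1}|\nabla\eta|^2$ of size $\alpha_0/t$. Choosing $t \geq t_0$ with $t_0$ as in \eqref{equa2.8} (which, judging by the stated inequality, is of order $1+\sqrt{\kappa}R$) ensures the first residual is absorbed into $\beta_0 \int f^{\alpha_0+p/2+t}\eta^2$, while the Ricci residual is rewritten as $\lesssim t_0^2 R^{-2}\int f^{\alpha_0+p/2+t-1}\eta^2$ since $\kappa \lesssim t_0^2/R^2$. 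The upshot is
\[
\beta_0\int_\Omega f^{\alpha_0+\frac{p}{2}+t}\eta^2 + \frac{C_3}{t}\int_\Omega f^{\alpha_0+\frac{p}{2}+t-3}|\nabla f|^2 \eta^2 \leq a_5 t_0^2 R^{-2}\int_\Omega f^{\alpha_0+\frac{p}{2}+t-1}\eta^2 + \frac{a_4}{t}\int_\Omega f^{\alpha_0+\frac{p}{2}+t-1}|\nabla\eta|^2.
\]

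Finally, to insert the $L^{\frac{2n}{n-2}}$-norm I apply \lemref{salof} to $g := f^{\frac{\alpha_0+t-1}{2}+\frac{p}{4}}\eta$. Since $|\nabla g|^2 \leq C t^2 f^{\alpha_0+p/2+t-3}|\nabla f|^2\eta^2 + C f^{\alpha_0+p/2+t-1}|\nabla\eta|^2$, Saloff-Coste's inequality gives
\[
e^{-c_0(1+\sqrt{\kappa}R)} V^{\frac{2}{n}} R^{-2}\|g\|_{L^{\frac{2n}{n-2}}}^2 \leq C t^2 \int_\Omega f^{\alpha_0+\frac{p}{2}+t-3}|\nabla f|^2\eta^2 + C \int_\Omega f^{\alpha_0+\frac{p}{2}+t-1}|\nabla\eta|^2 + CR^{-2}\int_\Omega f^{\alpha_0+\frac{p}{2}+t-1}\eta^2.
\]
Dividing by $t^3$ and adding a small multiple of the result to the Caccioppoli inequality above eliminates the interior $|\nabla f|^2$ term and produces exactly the stated bound, with the factor $e^{-t_0}$ inherited from the $e^{c_0(1+\sqrt{\kappa}R)}$ of Saloff-Coste once $t_0$ is chosen proportional to $1+\sqrt{\kappa}R$.

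The main obstacle I anticipate is the delicate Young-inequality balance in the second step: the coefficient $\alpha_0 t$ in front of the $|\nabla f|^2$-term must simultaneously absorb the Ricci-generated $|\nabla f|$-term (forcing $\varepsilon$ small) and the cross term with $\nabla \eta$ (forcing a weight $\sim 1/t$), while leaving enough of the coefficient to still feed the Sobolev inequality in the third step. The choice of $t_0$ in \eqref{equa2.8} is precisely what makes this bookkeeping work out, and the final constants $a_3,a_4,a_5$ come from tracking this balance together with the constant $c_0$ from \lemref{salof}.
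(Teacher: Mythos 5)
Your proposal is correct and follows essentially the same route as the paper: test \eqref{equa2.20} against $f^t\eta^2$, integrate by parts using the divergence form of $\mL$, absorb the $|\nabla f|$ residuals via Young's inequality under the constraint $t\geq t_0\sim 1+\sqrt\kappa R$, and convert the coercive $|\nabla f|^2$ term into the $L^{2n/(n-2)}$ norm through Saloff-Coste. Two minor points to tighten: the paper tests against the regularized $f_\epsilon^t\eta^2$ with $f_\epsilon=(f-\epsilon)^+$ and lets $\epsilon\to 0$ (needed since $u$ is only $W^{2,2}_{loc}$ near $\{\nabla u=0\}$), and the coefficient of the coercive $\int f^{\alpha_0+p/2+t-3}|\nabla f|^2\eta^2$ term after Young's inequality is of order $t$ rather than $1/t$, which upon substituting $|\nabla(f^{(\alpha_0+t-1)/2+p/4}\eta)|^2\lesssim t^2 f^{\alpha_0+p/2+t-3}|\nabla f|^2\eta^2+\dots$ produces the $\sim 1/t$ weight on the Sobolev norm as stated.
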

	
\begin{proof}
By the regularity theory on elliptic equations, away from $\{f=0\}$, $u$ is smooth. So, the both sides of \eqref{equa2.20} are in fact smooth. Let $\epsilon>0$ and $\psi = f_\epsilon^{\alpha}\eta^2 $, where $f_\epsilon = (f-\epsilon)^+$, $\eta\in C^{\infty}_0(B_R(o))$ is non-negative, and $\alpha>1$ which will be determined later.

Multiplying the both sides of \eqref{equa2.20} by $\psi$, integrating then over $\Omega$ and taking a direct computation yield
\begin{align}\label{equa:3.17}
\begin{split}
&-\int_\Omega \alpha_0 tf^{\alpha_0+\frac{p}{2}-2}f_{\epsilon}^{t-1}|\nabla f|^2\eta^2 + t\alpha_0(p-2)f^{\alpha_0+\frac{p}{2}-3}f_{\epsilon}^{t-1}\la\nabla f,\nabla u\ra^2\eta^2\\
&-\int_\Omega2\eta\alpha_0 f^{\alpha+\frac{p}{2} -2}f_{\epsilon}^t\la\nabla f,\nabla\eta\ra+2\alpha_0\eta(p-2)f^{\alpha+\frac{p}{2}-3}f_{\epsilon}^t\la\nabla f,\nabla u\ra\la\nabla u,  \nabla\eta\ra\\
\geq & 2\beta_{0}\alpha_0 \int_\Omega f^{\alpha_0+\frac{p}{2}}f_{\epsilon}^t\eta^2  -2(n-1)\alpha_0\kappa\int_\Omega f^{\alpha_0+\frac{p}{2}-1}f_{\epsilon}^t\eta^2 - a_1\alpha_0\int_\Omega f^{\alpha_0+\frac{p-3}{2}}f_{\epsilon}^t|\nabla f|\eta^2.
\end{split}
\end{align}
If we denote $a_2 = \min\{1,\, p-1\}$, then we have
\begin{align}\label{2.24}
f_{\epsilon}^{t-1}|\nabla f|^2 +(p-2)f_{\epsilon}^{t-1}f^{-1}\la\nabla f,\nabla u\ra^2\geq a_2 f_{\epsilon}^{t-1}|\nabla f|^2
\end{align}
and
\begin{align}\label{2.25}
f_{\epsilon}^t\la\nabla f,\nabla\eta\ra+ (p-2)f_{\epsilon}^tf^{-1}\la\nabla f,\nabla u\ra\la\nabla u,  \nabla\eta\ra\geq -(p+1)f_{\epsilon}^t |\nabla f||\nabla\eta|.
\end{align}
Plugging (\ref{2.24}) and (\ref{2.25}) into (\ref{equa:3.17}) and letting $\epsilon\to 0$, we obtain
\begin{align}\label{2.26}
\begin{split}
&\ 2\beta_0\int_\Omega f^{\alpha_0+\frac{p}{2}+t}\eta^2 + \int_\Omega a_2  tf^{\alpha_0+\frac{p}{2}+t-3}|\nabla f|^2\eta^2\\
\leq &~2(n-1) \kappa\int_\Omega f^{\alpha_0+\frac{p}{2}+t-1}\eta^2 + a_1 \int_\Omega f^{\alpha_0+\frac{p-3}{2}+t }|\nabla f|\eta^2\\
& +2 (p-1)\int_\Omega  f^{\alpha_0+\frac{p}{2}+t-2}|\nabla f||\nabla\eta|\eta.
\end{split}
\end{align}
Here we have divided the both sides of the inequality by $\alpha_0$.
		
Using Cauchy-inequality, we have
\begin{align}\label{2.27}
\begin{split}
				a_1 f^{\alpha_0+\frac{p-3}{2}+t }|\nabla f|\eta^2\leq &\
				\frac{a_2t}{4}    f^{\alpha_0+\frac{p}{2}+t-3}|\nabla f|^2\eta^2
				+\frac{ a_1^2}{a_2t} f^{\alpha_0+\frac{p}{2}+t}\eta^2;
				\\
				2(p+1)f^{\alpha_0+\frac{p}{2}+t-2}|\nabla f||\nabla\eta|\eta\leq &\
				\frac{a_2t}{4}    f^{\alpha_0+\frac{p}{2}+t-3}|\nabla f|^2\eta^2
				+\frac{4(p+1)^2 }{a_2t} f^{\alpha_0+\frac{p}{2}+t-1}|\nabla \eta|^2.
\end{split}
\end{align}
Now we choose $t$ large enough such that
		\begin{align}\label{2.28}
			\frac{a_1^2}{a_2t}\leq  \beta_0.
		\end{align}
It follows from (\ref{2.26}), (\ref{2.27}) and (\ref{2.28}) that
		\begin{align}
			\label{2.29}
			\begin{split}
				& \beta_0\int_\Omega f^{\alpha_0+\frac{p}{2}+t}\eta^2
				+
				\frac{a_2  t}{2}\int_\Omega f^{\alpha_0+\frac{p}{2}+t-3}|\nabla f|^2\eta^2 \\
				\leq  &\
				2(n-1) \kappa\int_\Omega f^{\alpha_0+\frac{p}{2}+t-1}\eta^2
				+\frac{4(p+1)^2 }{a_2t}  \int_\Omega f^{\alpha_0+\frac{p}{2}+t-1}|\nabla \eta|^2.
			\end{split}
		\end{align}
		
On the other hand, we have
\begin{align}\label{2.30}
\begin{split}
\left|\nabla \left(f^{\frac{\alpha_0+t-1}{2}+\frac{p}{4} }\eta \right)\right|^2\leq &\ 2\left|\nabla f^{\frac{\alpha_0+t-1}{2}+\frac{p}{4} }\right|^2\eta^2 +2f^{\alpha_0+t-1+\frac{p}{2}}|\nabla\eta|^2\\
=&\ \frac{(\alpha_0+t+\frac{p}{2}-1)^2}{2}f^{\alpha_0+t+\frac{p}{2}-3}|\nabla f |^2\eta^2 +2f^{\alpha_0+t-1+\frac{p}{2}}|\nabla\eta|^2  .
\end{split}
\end{align}
Substituting (\ref{2.30}) into (\ref{2.29}) gives
		\begin{align}
			\label{2.31}
			\begin{split}
				& \beta_0 \int_\Omega f^{\alpha_0+\frac{p}{2}+t}\eta^2
				+
				\frac{4a_2t}{(2\alpha_0+2t+p-2)^2}\int_\Omega   \left|\nabla \left(f^{\frac{\alpha_0+t-1}{2}+\frac{p}{4} }\eta\right)\right|^2 \\
				\leq  &\
				2(n-1)\kappa  \int_\Omega f^{\alpha_0+t+\frac{p}{2}-1}\eta^2
				+
				\frac{4(p-1)^2 }{a_2t} \int_\Omega f^{\alpha_0+\frac{p}{2}+t-1}|\nabla\eta|^2\\
				&\ +
				\frac{8a_2t}{(2\alpha_0+2t+p-2)^2}\int_\Omega f^{\alpha_0+t+\frac{p}{2}-1}|\nabla\eta|^2 .
			\end{split}
		\end{align}
		
From now on we use $a_1,\, a_2,\, a_3\, \cdots$ to denote constants depending only on $n,\, p,\,q$, and $r$. We choose $a_3$ and $a_4$ such that
\begin{align}\label{2.32}
\frac{a_3}{t}\leq  \frac{4a_2t}{(2\alpha_0+2t+p-2)^2}\quad\quad\text{and}\quad\quad \frac{8a_2t}{(2\alpha_0+2t+p-2)^2}+\frac{4(p-1)^2}{a_2t} \leq\frac{a_4}{t},
\end{align}
since $t$ satisfies (\ref{2.28}). It follows from (\ref{2.31}) and (\ref{2.32}) that
\begin{align}\label{2.33}
\begin{split}
&\beta_0\int_\Omega f^{\alpha_0+\frac{p}{2}+t}\eta^2 + \frac{a_3}{t}\int_\Omega   \left|\nabla \left(f^{\frac{\alpha_0+t-1}{2}+\frac{p}{4} }\eta\right)\right|^2 \\
\leq  &\ 2(n-1)\kappa  \int_\Omega f^{\alpha_0+t+\frac{p}{2}-1}\eta^2 + \frac{a_4 }{t} \int_\Omega f^{\alpha_0+\frac{p}{2}+t-1}\left|\nabla\eta\right|^2.
\end{split}
\end{align}
We also note that Saloff-Coste's Sobolev inequality implies
$$e^{-c_0(1+\sqrt{\kappa}R)}V^{\frac{2}{n}}R^{-2}\left\|f^{\frac{\alpha_0+t-1}{2}+\frac{p}{4} }\eta\right\|_{L^{\frac{2n}{n-2}}(\Omega)}^2\leq \int_{\Omega}\left|\nabla \left(f^{\frac{\alpha_0+t-1}{2}+\frac{p}{4} }\eta\right)\right|^2+R^{-2}\int_\Omega f^{ \alpha_0+t +\frac{p}{2} -1 }\eta ^2.$$
Thus, we can infer from the above Saloff-Coste's Sobolev inequality and \eqref{2.33} that
\begin{align}\label{3.32}
\begin{split}
& \beta_0\int_\Omega f^{\alpha_0+\frac{p}{2}+t}\eta^2 + \frac{a_3}{t }e^{-c_0(1+\sqrt{\kappa}R)} V^{\frac{2}{n}}R^{-2}\left\|f^{\frac{\alpha_0+t-1}{2}+\frac{p}{4} }\eta\right\|_{L^{\frac{2n}{n-2}}}^2\\
\leq  &\ 2(n-1)\kappa  \int_\Omega f^{\alpha_0+t+\frac{p}{2}-1}\eta^2+\frac{a_4}{ t }\int_\Omega f^{\alpha_0+t+\frac{p}{2}-1}|\nabla\eta|^2
+\frac{a_3}{ t }\int_\Omega R^{-2}f^{ \alpha_0 +\frac{p}{2}+t-1 }\eta ^2.
\end{split}
\end{align}
		
Now we set
\begin{align}\label{equa2.8}
t_0 = c_{n,p,q,r}\left(1+\sqrt{\kappa} R\right) \quad\text{and}\quad c_{n,p,q,r}=\max\left\{c_0+1, \frac{a_1^2}{a_2\beta_{n,p,q,r}} \right\},
\end{align}
and pick $t$ such that $t\geq t_0$. Since
		\begin{align*}
			2(n-1)\kappa  R^2\leq\frac{ 2(n-1)}{c_{n,p,q,r}^2}t_0^2\quad \text{ and}\quad \frac{a_3}{t}\leq \frac{a_3}{c_{n,p,q,r}},
		\end{align*}
then, there exists $a_5 = a_5(n,p,q,r)>0$ such that
		\begin{align}\label{a5}
			2(n-1)\kappa  R^2+\frac{a_3}{t}\leq a_5t_0^2 = a_5c^2_{n,p,q,r}\left(1+\sqrt{\kappa} R\right)^2.
		\end{align}
Hence, it follows from (\ref{3.32}) and (\ref{a5}) that
\begin{align}\label{2.34}
\begin{split}
&\beta_0\int_\Omega f^{\alpha_0+\frac{p}{2}+t}\eta^2 + \frac{a_3}{ t }e^{-t_0}V^{\frac{2}{n}}R^{-2}\left\|f^{\frac{\alpha_0+t-1}{2}+\frac{p}{4} }\eta\right\|_{L^{\frac{2n}{n-2}}}^2\\
\leq\ & a_5t_0^2R^{-2} \int_\Omega f^{\alpha_0+\frac{p}{2}+t-1}\eta^2+\frac{a_4}{t }\int_\Omega f^{\alpha_0+\frac{p}{2}+t-1}|\nabla\eta|^2.
\end{split}
\end{align}
Thus, we finish the proof.
\end{proof}

\subsection{ \texorpdfstring{$L^{\beta}$}\, bound of gradient in a ball with radius $3R/4$: the case $\dim(M)>2$}\label{sect3.2}
\begin{lem}\label{lpbound}
Let $(M,g)$ be an $n$-dim complete Riemannian manifold with $\ric_g\geq -(n-1)\kappa g$ where $\kappa$ is a non-negative constant and $\Omega = B_R(o)\subset M$ be a geodesic ball. Assume that $f$ is the same as in the above lemma. If $bc\geq 0$ and $(b, c,  q, r)\in W_1\cup W_2\cup W_3\cup W_4$, then, for $\beta = \left(\alpha_0 + t_0 + \frac{p}{2} -1\right)\frac{n}{n-2}$, there exists a positive constant $a_8 = a_8(n,p,q)>0$ such that
\begin{align}\label{lpbpund}
\|f \|_{L^{\beta}(B_{3R/4}(o))}\leq a_8V^{\frac{1}{\beta}} \frac{t_0^2}{ R^2},
\end{align}
where $V$ is the volume of geodesic ball $B_R(o)$.
\end{lem}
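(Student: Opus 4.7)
The aim is to convert the integral inequality of Lemma~\ref{lem3.2}, specialized to $t=t_0$, into an $L^\beta$ bound on the smaller ball $B_{3R/4}(o)$. The plan is to pick a suitable cutoff, use Young's inequality to absorb the right-hand side into the leading term on the left, and then read off the $L^\beta$ estimate from the Sobolev-type term on the left, exploiting the fact that the exponent $\beta$ itself scales linearly with $t_0$.

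First I would choose $\eta \in C_0^\infty(B_R(o))$ with $\eta\equiv 1$ on $B_{3R/4}(o)$, $0\le\eta\le 1$, and $|\nabla\eta|\le C/R$, and set $t=t_0$ in Lemma~\ref{lem3.2}. The RHS carries only $f^{\alpha_0+p/2+t_0-1}$, one degree below the leading term $\beta_0\int f^{\alpha_0+p/2+t_0}\eta^2$ on the LHS. Using $|\nabla\eta|^2\le C/R^2$ together with Young's inequality with conjugate exponents $\tfrac{\alpha_0+p/2+t_0}{\alpha_0+p/2+t_0-1}$ and $\alpha_0+p/2+t_0$, each RHS term splits as
\[
\varepsilon\, f^{\alpha_0+p/2+t_0}\;+\;C_\varepsilon\,\Big(\tfrac{t_0^2}{R^2}\Big)^{\alpha_0+p/2+t_0}.
\]
Choosing $\varepsilon$ small enough that the first piece is absorbed into $\beta_0/2$ times the leading LHS term yields
\[
\frac{a_3}{t_0}\,e^{-t_0}\,V^{2/n}\,R^{-2}\,\big\|f^{(\alpha_0+t_0-1)/2+p/4}\eta\big\|_{L^{2n/(n-2)}}^{2}\;\le\;C\,V\,\Big(\frac{t_0^2}{R^2}\Big)^{\alpha_0+p/2+t_0},
\]
for some $C=C(n,p,q,r)$.

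Next, since $\eta\equiv 1$ on $B_{3R/4}(o)$ and the exponent identity $2\bigl(\tfrac{\alpha_0+t_0-1}{2}+\tfrac{p}{4}\bigr)\cdot\tfrac{n}{n-2}=\beta$ holds by definition of $\beta$, the left-hand side dominates $\bigl(\int_{B_{3R/4}}f^{\beta}\bigr)^{(n-2)/n}$. Using the companion identity $\alpha_0+p/2+t_0=\tfrac{n-2}{n}\beta+1$ to rewrite $R^2\bigl(t_0^2/R^2\bigr)^{\alpha_0+p/2+t_0}=t_0^2\bigl(t_0^2/R^2\bigr)^{(n-2)\beta/n}$, one arrives at
\[
\int_{B_{3R/4}} f^\beta \;\le\;\bigl(C\,t_0^{3}\,e^{t_0}\bigr)^{n/(n-2)}\,V\,\Big(\frac{t_0^2}{R^2}\Big)^{\beta}.
\]
Taking the $1/\beta$-th power gives the announced estimate, because $\beta$ is linear in $t_0$, so that $(t_0^{3}e^{t_0})^{n/((n-2)\beta)}$ collapses to a constant $a_8=a_8(n,p,q,r)$.

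The main subtlety is the last step: the Saloff-Coste constant $e^{c_0(1+\sqrt\kappa R)}$ produces a factor $e^{t_0}$ that grows (polynomially-)exponentially in $R$, and at first sight would ruin the estimate. The resolution is exactly the matching between the Sobolev penalty and the Moser exponent: one has $\beta \sim \tfrac{n}{n-2}t_0$, so $1/\beta \sim 1/t_0$, and the offending $e^{t_0}$ becomes a harmless $e^{n/(n-2)}$ in the final bound. Getting this balance right drives the choice $t_0 = c_{n,p,q,r}(1+\sqrt\kappa R)$ made in \eqref{equa2.8}, and is the only delicate bookkeeping; the rest of the argument is the standard Young/Sobolev cleanup.
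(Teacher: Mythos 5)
Your overall plan, and in particular the final balancing observation --- that $\beta\sim\tfrac{n}{n-2}\,t_0$ forces the Saloff--Coste penalty $e^{t_0}$ to collapse to a bounded constant once the $1/\beta$-th root is taken --- is exactly the mechanism the paper exploits. But there is a genuine gap in the cutoff bookkeeping that breaks your absorption step.

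You take the standard cutoff with $|\nabla\eta|\le C/R$ and split each RHS term of Lemma~\ref{lem3.2} by pointwise Young as $\varepsilon f^{\alpha_0+p/2+t_0}+C_\varepsilon(t_0^2/R^2)^{\alpha_0+p/2+t_0}$. For the first RHS term, which already carries $\eta^2$, this is fine if you keep the cutoff attached:
\[
a_5 t_0^2 R^{-2}f^{\alpha_0+\frac{p}{2}+t_0-1}\eta^2
=\bigl(f^{\alpha_0+\frac{p}{2}+t_0-1}\eta^{2\frac{\alpha_0+p/2+t_0-1}{\alpha_0+p/2+t_0}}\bigr)\bigl(a_5 t_0^2 R^{-2}\eta^{\frac{2}{\alpha_0+p/2+t_0}}\bigr)
\le \varepsilon f^{\alpha_0+\frac{p}{2}+t_0}\eta^2+C_\varepsilon\Bigl(\tfrac{t_0^2}{R^2}\Bigr)^{\alpha_0+\frac{p}{2}+t_0}\eta^2,
\]
and the $\varepsilon$-piece absorbs into $\tfrac{\beta_0}{2}\int f^{\alpha_0+p/2+t_0}\eta^2$. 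But for the term $\tfrac{a_4}{t_0}\int f^{\alpha_0+p/2+t_0-1}|\nabla\eta|^2$ the same splitting produces
\[
\varepsilon f^{\alpha_0+\frac{p}{2}+t_0}\eta^2 \;+\; C_\varepsilon\,|\nabla\eta|^{2(\alpha_0+\frac{p}{2}+t_0)}\,\eta^{-2(\alpha_0+\frac{p}{2}+t_0-1)},
\]
and with the standard cutoff the second factor is unbounded on the annulus where $\eta\to 0$ while $|\nabla\eta|\sim 1/R$. Equivalently, if you drop the cutoff from the $\varepsilon$-piece you get $\varepsilon\int f^{\alpha_0+p/2+t_0}$ with no $\eta^2$, which cannot be absorbed into $\tfrac{\beta_0}{2}\int f^{\alpha_0+p/2+t_0}\eta^2$ because $\eta<1$ on a fat annulus.

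The paper resolves this by taking $\eta=\gamma^{\alpha_0+p/2+t_0}$ with $\gamma$ the standard cutoff, so that $|\nabla\eta|\lesssim t_0 R^{-1}\eta^{(\alpha_0+p/2+t_0-1)/(\alpha_0+p/2+t_0)}$; then $|\nabla\eta|^{2(\alpha_0+p/2+t_0)}\eta^{-2(\alpha_0+p/2+t_0-1)}\lesssim (t_0/R)^{2(\alpha_0+p/2+t_0)}$ is bounded, and the $\eta^2$ lands exactly where it is needed. (The paper phrases this as a H\"older step on the integral followed by scalar Young, rather than pointwise Young, but the power cutoff is what makes either variant close.) With that one change, the remainder of your computation --- including the $t_0$-to-$\beta$ balance --- is sound and matches the paper's argument. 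One more small point: the split into $\{f<\text{threshold}\}$ and its complement that the paper uses for the Ricci term is just another way to perform your Young step there, so your treatment of that term is fine.
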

	
\begin{proof}
We set $t=t_0$ in \eqref{2.34}, and obtain
\begin{align}\label{equation:3.31}
\begin{split}
& \beta_0\int_\Omega f^{\alpha_0+\frac{p}{2}+t_0}\eta^2 + \frac{a_3}{ t_0 }e^{-t_0}V^{\frac{2}{n}}R^{-2}\left\|f^{\frac{\alpha_0+t_0-1}{2}+\frac{p}{4} }\eta\right\|_{L^{\frac{2n}{n-2}}}^2\\
\leq  &\ a_5t_0^2R^{-2} \int_\Omega f^{\alpha_0+\frac{p}{2}+t_0-1}\eta^2+\frac{a_4}{t_0 }\int_\Omega f^{\alpha_0+\frac{p}{2}+t_0-1}|\nabla\eta|^2.
\end{split}
\end{align}
From (\ref{equation:3.31}) we know that, if
$$
f\geq  \frac{2a_{5}t_0^2}{\beta_0R^2},
$$
then there holds true
$$a_5t_0^2R^{-2} \int_\Omega f^{\alpha+\frac{p}{2}+t_0-1}\eta^2\leq\frac{\beta_{n,p,q,r}}{2}\int_\Omega f^{\alpha_0+\frac{p}{2}+t_0}\eta^2.$$

Now we denote $\Omega_1 = \left\{f<  \frac{2a_{5}t_0^2}{\beta_{n,p,q,r}R^2} \right\}$, then $\Omega=\Omega_1\cup\Omega_2$ where $\Omega_2$ is the complement of $\Omega_1$. Hence, we have
		\begin{align}\label{2.36}
			\begin{split}
				a_5t_0^2R^{-2} \int_\Omega f^{\alpha_0+\frac{p}{2}+t_0-1}\eta^2
				=&\ a_5t_0^2R^{-2} \int_{\Omega_1} f^{\alpha_0+\frac{p}{2}+t_0-1}\eta^2
				+a_5t_0^2R^{-2} \int_{\Omega_2} f^{\alpha_0+\frac{p}{2}+t_0-1}\eta^2
				\\
				\leq&\
				\frac{a_5t_0^2}{R^2} \left(\frac{2a_{5}t_0^2}{\beta_0R^2}\right)^{\alpha_0+\frac{p}{2} +t_0-1 }V
				+
				\frac{\beta_0 }{2}\int_\Omega f^{\alpha_0+\frac{p}{2}+t_0}\eta^2,
			\end{split}
		\end{align}
		where $V$ is the volume of $B_R(o)$.
		It follows from (\ref{2.34}) and (\ref{2.36})
		\begin{align}
			\label{2.37}
			\begin{split}
				&\frac{\beta_0}{2}\int_\Omega f^{\alpha_0+\frac{p}{2}+t_0}\eta^2
				+
				\frac{a_3}{ t_0 }e^{-t_0}V^{\frac{2}{n}}R^{-2}\left\|f^{\frac{\alpha_0+t_0-1}{2}+\frac{p}{4} }\eta\right\|_{L^{\frac{2n}{n-2}}}^2\\
				\leq \ &
				\frac{a_5t_0^2}{R^2} \left(\frac{2a_{5}t_0^2}{\beta_0R^2}\right)^{\alpha_0+\frac{p}{2} +t_0-1 }V
				+\frac{a_4}{ t_0 }\int_\Omega f^{\alpha_0+\frac{p}{2}+t_0-1}|\nabla\eta|^2.
			\end{split}
		\end{align}
		
We choose $\gamma\in C^{\infty}_0(B_R(o))$ such that
		$$
		\begin{cases}
			0\leq\gamma(x)\leq 1,\quad |\nabla\gamma(x)|\leq\frac{C }{R}, &\forall x\in B_R(o);\\
			\gamma(x)\equiv 1, & \forall x\in B_{ {3R}/{4}}(o),
		\end{cases}
		$$
and let $\eta = \gamma^{ \alpha_0 + \frac{p}{2}+t_0}$. Then, we have
		\begin{align}\label{314}
			a_4R^2 |\nabla\eta|^2
			\leq
			a_4 C^2 \left( t_0+ \frac{p}{2}+\alpha_0\right )^2\eta ^{\frac{2\alpha_0+2t_0+p-2}{\alpha_0+p/2+t_0}}
			\leq
			a_{6}t^2_0\eta^{\frac{2\alpha_0+2t_0+p-2}{\alpha_0+p/2+t_0}}.
		\end{align}
By H\"older inequality and Young inequality, we have
\begin{align}\label{2.39}
\begin{split}
\frac{a_4}{t_0}\int_{\Omega}f^{\frac{p}{2}+\alpha_0+t_0-1}|\nabla\eta|^2 \leq\  &\frac{a_6t_0}{R^2} \int_{\Omega}f^{\frac{p}{2}+\alpha_0+t_0-1}
\eta^{\frac{2\alpha_0+p+2t_0-2}{\alpha_0+p/2+t_0}}\\				
\leq\ & \frac{a_6t_0}{R^2}  \left(\int_{\Omega}f^{\alpha_0+t_0+ \frac{p}{2} }\eta^2\right)^{\frac{\alpha_0+p/2+t_0-1}{\alpha+p/2+t_0}}V^{\frac{ 1}{\alpha_0+t_0+ p/2 }}\\
\leq \ &\frac{\beta_0}{2}\left[\int_{\Omega}f^{ \alpha_0+t_0+\frac{p}{2} }\eta^2 + \left(\frac{2a_{6}t_0 }{\beta_0R^2}\right)^{ \alpha_0 +t_0 +\frac{p}{2}}V\right].
\end{split}
\end{align}
Hence, we conclude from (\ref{2.37}) and (\ref{2.39}) that
		\begin{align}
			\label{2.40}
			\begin{split}
				&\left(\int_{\Omega}f^{\frac{n(p/2+\alpha_0+t_0-1)}{n-2}}\eta^{\frac{2n}{n-2}}\right)^{\frac{n-2}{n}}
				\\
				\leq\
				&
				\frac{t_0}{a_3} e^{t_0}V^{1-\frac{2}{n}}R^2
				\left[\frac{2a_5t_0^2}{R^2} \left(\frac{2a_{5}t_0^2}{\beta_0R^2}\right)^{\alpha_0+t_0+\frac{p}{2}-1 }
				+
				\frac{a_{6}t^2_0}{R^2} \left(\frac{2a_{6}t_0 }{\beta_0R^2}\right)^{\alpha_0+t_0+\frac{p}{2} -1 }\right]
				\\
				\leq\
				&
				a_7^{t_0}e^{t_0}V^{1-\frac{2}{n}}t_0^3
				\left( \frac{t_0^2}{ R^2}\right)^{\alpha_0+t_0+\frac{p}{2} -1},
			\end{split}
		\end{align}
where the constant $a_7$ depends only on $n,\,p,\, q, \, r$  and satisfies
		$$
		a_7^{t_0} \geq \frac{2a_5}{a_3}\left(\frac{4a_5}{\beta_0}\right)^{\alpha_0+t_0+\frac{p}{2}-1}
		+\frac{a_6}{a_3}\left(\frac{4a_6}{\beta_0t_0}\right)^{\alpha_0+t_0+\frac{p}{2}-1}.
		$$
Here we have used the fact $t_0\geq1$.
		
Taking $\frac{1}{\alpha_0+t_0+p/2-1}$ power of the both sides of (\ref{2.40}) gives
		\begin{align}
			\left\|f\eta^{\frac{2}{\alpha_0+t_0+p/2-1}}\right\|_{L^{\beta}(\Omega)}\leq a_7^{\frac{t_0}{\alpha_0+t_0+p/2-1}}V^{1/\beta}
			t_0^{\frac{3}{\alpha_0+t_0+p/2-1}}\frac{t_0^2}{ R^2}\leq a_8V^{\frac{1}{\beta}} \frac{t_0^2}{ R^2},
		\end{align}
		where $a_8$ depends only on $n,\ p,\ q,\ r$ and satisfies
		$$
		a_8 \geq a_7^{\frac{t_0}{\alpha_0+t_0+p/2-1}}t_0^{\frac{3}{\alpha_0+t_0+p/2-1}}.
		$$
		Since $\eta\equiv1$ in $B_{3R/4}$, we obtain that
		$$
		\|f \|_{L^{\beta}(B_{3R/4}(o))}\leq a_8V^{\frac{1}{\beta}} \frac{t_0^2}{ R^2}.
		$$
Thus, we obtain the desired estimate.
\end{proof}

\subsection{Moser iteration}\label{sec3.3}\

Now we turn to providing the proof of \thmref{t1}.
\begin{proof}
We discard the first term in (\ref{2.34}) to obtain
\begin{align}\label{2.42}
			\frac{a_3}{ t }e^{-t_0}V^{\frac{2}{n}}R^{-2}\left\|f^{\frac{\alpha_0+t-1}{2}+\frac{p}{4} }\eta\right\|_{L^{\frac{2n}{n-2}}}^2
			\leq
			\frac{a_5\alpha_0^2}{R^2}  \int_\Omega f^{\alpha_0+\frac{p}{2}+t-1}\eta^2+\frac{a_4}{ t }\int_\Omega f^{\alpha_0+\frac{p}{2}+t-1}|\nabla\eta|^2.
\end{align}
For $k=1, 2,\cdots$, let $r_k = \frac{R}{2}+\frac{R}{4^k}$ and $\Omega_k = B_{r_k}(o)$. Choosing a sequence of cut-off functions $\eta_k\in C^{\infty}(\Omega_k)$ which satisfy
\begin{align}
\begin{cases}
0\leq \eta_k(x)\leq1, \quad |\nabla\eta_k(x)|\leq \frac{C4^k}{R}, & \forall x\in B_{r_{k}}(o);\\
\eta_k(x)\equiv 1, &\forall x\in B_{r_{k+1}}(o),
\end{cases}
\end{align}
where $C$ is a constant that does not depend on $(M,g)$ and equation \eqref{equ0}, and substituting $\eta$ in (\ref{2.42}) by $\eta_k$, we arrive at
\begin{align*}
a_3e^{-t_0}V^{\frac{2}{n}} \left\|f^{\frac{\alpha_0+t-1}{2}+\frac{p}{4} }\eta_k\right\|_{L^{\frac{2n}{n-2}}(\Omega_k)}^2
\leq \ &a_5t_0^2t\int_{\Omega_k} f^{\alpha_0+\frac{p}{2}+t-1}\eta_k^2+ a_4R^2\int_{\Omega_k}f^{\alpha_0+\frac{p}{2}+t-1}\left|\nabla\eta_k\right|^2\\
\leq\ & \left(a_5t_0^2t + C^216^k\right)\int_{\Omega_k} f^{\alpha_0+\frac{p}{2}+t-1}.
\end{align*}
Now we choose $\beta_1=\beta$, $\beta_{k+1}=n\beta_k/(n-2), k=1,2, \ldots$, and let $t=t_k$ such that
		$$t_k+\frac{p}{2}+\alpha_0-1=\beta_k,$$
then we have
\begin{align*}
a_3 \left(\int_{\Omega_k}f^{\beta_{k+1}}\eta_k^\frac{2n}{n-2}\right)^{\frac{n-2}{n}}
\leq\ e^{ t_0}V^{-\frac{2}{n}}\left(a_5t_0^2\left(t_0+\alpha_0+\frac{p}{2}-1\right)\left(\frac{n}{n-2}\right)^k + C^216^k\right)\int_{\Omega_k} f^{\beta_k}.
\end{align*}
Since $n/(n-2)<16, ~\forall n>2$, there exists some constant $a_9 $ such that
\begin{align}\label{eq:2.45}
\left(\int_{\Omega_k}f^{\beta_{k+1}}\eta_k^\frac{2n}{n-2}\right)^{\frac{n-2}{n}}
\leq \ a_9t_0^316^k e^{ t_0}V^{-\frac{2}{n}}  \int_{\Omega_k} f^{\beta_k}.
\end{align}
Taking power of $1/\beta_k$ of the both sides of (\ref{eq:2.45}) and using the fact $\eta_k\equiv1\in\Omega_{k+1}$, we obtain
		\begin{align}
			\label{2.46}
			\begin{split}
				\|f\|_{L^{\beta_{k+1}}(\Omega_{k+1})}
				\leq &
				\left(a_9t_0^316^k e^{ t_0}V^{-\frac{2}{n}}\right)^{\frac{1}{\beta_k}} 16 ^{\frac{k}{\beta_k}}\|f\|_{L^{\beta_k}(\Omega_k)}.
			\end{split}
		\end{align}
Noting
		$$
		\sum_{k=1}^{\infty}\frac{1}{\beta_k} =\frac{n}{2\beta_1} \quad\text{and}\quad \sum_{k=1}^{\infty}\frac{k}{\beta_k}=\frac{n^2}{4\beta_1},
		$$
we have
		\begin{align}
			\label{2.47}
			\|f\|_{L^{\infty}(B_{R/2}(o))}
			\leq \
			a_{10}  V^{-\frac{1}{\beta}} \|f\|_{L^{\beta_1}(B_{3R/4}(o))},
		\end{align}
where
		$$
		a_{10} \geq \left(a_9t_0^316^k e^{ t_0}\right)^{\frac{n}{2\beta_1 }} 16 ^{\frac{n^2}{4\beta_1}}.
		$$
By (\ref{lpbpund}), we obtain
		\begin{align}
			\label{3.44}
			\|f\|_{L^{\infty}(B_{R/2}(o))}
			\leq\ a_{11}\frac{(1+\sqrt{\kappa}R)^2}{R^2},
		\end{align}
where $a_{11} = a_{10}a_8c_{n,p,q,r}$. Reminding $f=|\nabla u|^2$ and $u=-(p-1)\log v$, we obtain the desired estimate. Thus, we complete the proof of \thmref{t1}.
\end{proof}

\subsection{The case $\dim(M)=2$}\

In the proof of \thmref{t1} above, we have used Saloff-Coste's Sobolev inequality on the embedding $W^{1,2}(B)\hookrightarrow L^{\frac{2n}{n-2}}(B)$ on a manifold. Since the Sobolev exponent $2^*=2n/(n-2)$ requires $n>2$, we did not consider the case $n=2$ in \thmref{t1}. In this subsection, we will explain briefly that \thmref{t1} can also be established when $n=2$.
	
	When $\dim M=n=2$, we need the special case of  Saloff-Coste's Sobolev theorem, i.e., \lemref{salof}. For any $n'> 2$, there holds
	$$
	\|f\|_{L^{\frac{2n'}{n'-2}}(B)}^2\leq e^{c_0(1+\sqrt{\kappa}R)}V^{-\frac{2}{n'}}R^2\left(\int_B|\nabla f|^2+R^{-2}f^2\right)
	$$for any $f\in C^{\infty}_0(B)$.
	
	For example, if we choose $n'=4$, then we have	
	\begin{align*}
		\left(\int_{\Omega}f^{2\alpha+p}\eta^{4}\right)^{2}
		\leq e^{c_0(1+\sqrt{\kappa} R)}V^{-\frac{1}{2}}\left(R^2\int_{\Omega}\left|\nabla \left(f^{\frac{p}{4}+\frac{\alpha}{2}}\eta\right)\right|^2+\int_{\Omega}f^{\frac{p}{2}+\alpha}\eta^2\right).
	\end{align*}
	By the above inequality and \eqref{equation:3.31}, we can deduce the following integral inequality by almost the same method as in \lemref{lem3.2}.
	\begin{align} \label{equation3.19}
		\begin{split}
			& \beta_0\int_\Omega f^{\alpha_0+\frac{p}{2}+t}\eta^2 +
			\frac{a_3}{ t }e^{-t_0}V^{\frac{1}{2}}R^{-2}\left\|f^{\frac{\alpha_0+t-1}{2}+\frac{p}{4} }\eta\right\|_{L^{4}}^2\\
			\leq\  & a_5t_0^2R^{-2} \int_\Omega f^{\alpha_0+\frac{p}{2}+t-1}\eta^2+\frac{a_4}{t }\int_\Omega f^{\alpha_0+\frac{p}{2}+t-1}|\nabla\eta|^2,
		\end{split}
	\end{align}
	where $\alpha_0$ is the same as  $\alpha_0$ defined in Section 3,  but the constants $a_i, i=3, 4, 5$ may differ from those defined in Section 3.
	
	By repeating the same procedure as in Section \ref{sect3.2}, we can deduce from \eqref{equation3.19} the $L^{\beta}$-bound of $f $ in a geodesic ball with radius $3R/4$
	\begin{align}\label{equation:3.20}
		\|f \|_{L^{\beta}(B_{3R/4}(o))}\leq a_8V^{\frac{1}{\beta}} \frac{t_0^2}{ R^2},,
	\end{align}
	where $\beta=p+2\alpha_0+2t_0-2$.
	
For the Nash-Moser iteration, we set $\beta_l = 2^l(\alpha_0+t_0+p/2-1)$ and $\Omega_l$ by the similar way with that in Section \ref{sec3.3}, and can obtain the following inequality
	\begin{align}\label{equation:3.21}
		\|f\|_{L^{\infty}(B_{\frac{R}{2}}(o))}
		\leq\ &a_{11} V^{-\frac{1}{\beta}}\|f\|_{L^{\beta}(B_{\frac{3R}{4}}(o))}.
\end{align}
Combining \eqref{equation:3.20} and \eqref{equation:3.21}, we finally obtain the Cheng-Yau type gradient estimate. Harnack inequality and Liouville type results follow from the Cheng-Yau type gradient estimate.
	
\subsection{Proof of \thmref{t3}-\thmref{t4}}\

First we give the proof of \thmref{t3}:
\begin{proof}
For any $x\in B_{R/2}(o)\subset M$,  by \thmref{t1}, we have
\begin{align}\label{3.48}
|\nabla \ln v(x)|\leq c(n,p,q)\frac{ 1+\sqrt{\kappa}R }{R }.
\end{align}
Choosing a minimizing geodesic $\gamma(t)$ with arc length parameter connecting $o$ and $x$:
	$$
	\gamma:[0,d]\to M,\quad\gamma(0)=o, \quad \gamma(d)=x.
	$$
where $d=d(x, o)$ is the geodesic distance from $o$ to $x$, we have
\begin{align}
\ln v(x)-\ln v(o)=\int_0^d\frac{d}{dt}\ln v\circ\gamma(t)dt.
\end{align}
Since $|\gamma'|=1$, we have
\begin{align}
\left|\frac{d}{dt}\ln v\circ\gamma(t)\right|\leq |\nabla \ln v||\gamma'(t)| \leq C(n,p,q, r)\frac{1+\sqrt{\kappa}R}{R }.
\end{align}	
Thus it follows from $d\leq R/2$ and the above inequality that
	$$
	e^{-C(n,p,q, r)(1+\sqrt{\kappa}R)/2}\leq v(x)/v(o)\leq e^{C(n,p,q, r)(1+\sqrt{\kappa}R)/2}.
	$$
So for any $x, y\in B_{R/2}(o)$, we have
	$$
	v(x)/v(y)\leq e^{C(n,p,q, r)(1+\sqrt{\kappa}R)}.
	$$
	
Now suppose $v$ is an entire positive solution of equation \eqref{equ0} on $M$. Letting $R\to\infty$ in \eqref{3.48}, we obtain that
	$$
	|\nabla \ln v(x)|\leq C(n,p,q, r)\sqrt{\kappa}, \quad \forall x\in M.
	$$
For any $y\in M$, choose a minimizing geodesic $\gamma(t)$ with arc length parameter connecting $x$ and $y$:
	$$
	\gamma:[0,d]\to M,\quad\gamma(0)=x, \quad \gamma(d)=y,
	$$
where $d=d(x,y)$ is the distance from $x $ to $y$. We have
\begin{align}\label{3.49}
\ln v(y)-\ln v(x)=\int_0^d\frac{d}{dt}\ln v\circ\gamma(t)dt.
\end{align}
Since $|\gamma'(t)|=1$, we have
\begin{align}\label{3.50}
\left|\frac{d}{dt}\ln v\circ\gamma(t)\right|\leq |\nabla \ln v||\gamma'(t)| = C(n,p,q, r)\sqrt{\kappa}.
\end{align}
It follows from (\ref{3.49}) and (\ref{3.50}) that
\begin{align}\label{3.51}
v(y)/v(x)\leq e^{C(n,p,q,r)\sqrt{\kappa}d(x, y)}.
\end{align}
Thus we finish the proof by (\ref{3.51}).
\end{proof}

In order to give the proof of \thmref{t4} we need to establish the following lemma:

\begin{lem}\label{33}
Let $(M,g)$ be a non-compact complete manifold with $\mathrm{Ric}_g\geq0$. Assume that $v>0$ satisfies the following differential inequality with $1<p<n$
\begin{align}\label{339}		
\Delta_p v+v^r\leq 0\quad &\text{in}\,\, M.
\end{align}
If
$$p-1< r < \frac{n(p-1)}{n-p},$$
then there holds true
\begin{align}\label{340}
\int_{B_R(o)}v^{r}\leq C_1(n,p,r)R^{\frac{(n-p)r-(p-1)n}{r-p+1}},
\end{align}
where $B_R(o)\subset M$ is a geodesic ball and $C_1(n,p,r)$ ia a positive constant depending on $n$, $p$ and $r$.
\end{lem}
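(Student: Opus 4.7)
The plan is to use the Mitidieri--Pohozaev test function method, adapted to the Riemannian setting by replacing the Euclidean volume comparison with the Bishop--Gromov bound $\mathrm{vol}(B_{2R}(o))\leq C_n R^n$ that holds since $\mathrm{Ric}_g\geq 0$. Let $\phi\in C_0^\infty(B_{2R}(o))$ be a standard cutoff with $\phi\equiv 1$ on $B_R(o)$, $0\leq\phi\leq 1$ and $|\nabla\phi|\leq C/R$, and let $k$ be a sufficiently large parameter. Multiplying \eqref{339} by $\phi^k$ and integrating by parts yields
$$
\int_M v^r\phi^k \;\leq\; k\int_M \phi^{k-1}|\nabla v|^{p-1}|\nabla\phi|.
$$

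The first key step is a Young's inequality with conjugate exponents $p$ and $p/(p-1)$: choose $A=\phi^{k/p}v^{r/p}$ so that $A^p=\phi^k v^r$, and then $B$ is forced by $AB=\phi^{k-1}|\nabla v|^{p-1}|\nabla\phi|$, yielding $B^{p/(p-1)}=\phi^{k-p/(p-1)}v^{-r/(p-1)}|\nabla v|^p|\nabla\phi|^{p/(p-1)}$. Taking the Young parameter small enough to absorb the $A^p$ piece into the LHS leaves
$$
\int_M v^r\phi^k \;\leq\; C\int_M v^{-r/(p-1)}|\nabla v|^p|\nabla\phi|^{p/(p-1)}\phi^{k-p/(p-1)}.
$$
The second key step is to control the resulting energy integral by a Caccioppoli estimate: test \eqref{339} against $v^{-s}\phi^{k-p/(p-1)}$ with $s=r/(p-1)-1$, which is positive by the hypothesis $r>p-1$, and apply Young's inequality once more to obtain
$$
\int_M v^{-r/(p-1)}|\nabla v|^p\phi^{k-p/(p-1)} \;\leq\; C\int_M v^{(p(p-1)-r)/(p-1)}|\nabla\phi|^p\phi^{k-p^2/(p-1)}.
$$
Combining the two estimates and using $|\nabla\phi|\leq C/R$ throughout collapses every gradient into a power of $R$, leaving
$$
\int_M v^r\phi^k \;\leq\; CR^{-p^2/(p-1)}\int_M v^{(p(p-1)-r)/(p-1)}\phi^{k-p^2/(p-1)}.
$$

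The final step is a H\"older interpolation: write $v^{(p(p-1)-r)/(p-1)}=(v^r)^{\sigma}$ with $\sigma=(p(p-1)-r)/((p-1)r)\in[0,1)$, and use Bishop--Gromov to bound the residual cutoff integral by $C_nR^n$, provided $k\geq pr/(r-p+1)$. After reabsorbing the factor $(\int v^r\phi^k)^{\sigma}$ into the LHS and collecting powers of $R$, the exponent simplifies via $n-p^2/((p-1)(1-\sigma))=n-pr/(r-p+1)=((n-p)r-(p-1)n)/(r-p+1)$, which together with $\phi\equiv 1$ on $B_R(o)$ gives the claim. The main obstacle I foresee lies in the subregime $r>p(p-1)$, where the intermediate exponent $(p(p-1)-r)/(p-1)$ becomes negative and the H\"older step as written fails; in that case one either iterates the Caccioppoli estimate with a sequence of auxiliary exponents $s_j\in(0,p-1)$, or reopens the initial Young step with the alternative conjugate pair $r/(p-1)$ and $r/(r-p+1)$, which align more naturally when $r\geq p(p-1)$. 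The hypothesis $r<n(p-1)/(n-p)$ itself plays no direct role in the derivation of the inequality; it only ensures the final exponent on $R$ is negative, making the estimate genuinely decaying as $R\to\infty$ in the Liouville-type applications that follow.
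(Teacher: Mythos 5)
Your overall strategy is the Mitidieri--Pohozaev nonlinear capacity method, and it is close in spirit to what the paper does, but the execution differs in a way that matters. The paper tests the inequality against $\varphi v^{\alpha}$ with a \emph{free} parameter $\alpha\in(1-p,0)$, chooses Young/H\"older exponents $s=\frac{r+\alpha}{\alpha+p-1}$ and $a=\frac{r+\alpha}{(1-\alpha)(p-1)}$, and observes that as $\alpha\to 0^-$ all these auxiliary exponents are valid (positive and $>1$) for the \emph{entire} range $p-1<r<\frac{n(p-1)}{n-p}$, while the final exponent $\sigma=\frac{(n-p)r-(p-1)n}{r-p+1}$ turns out to be $\alpha$-independent. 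Your version hard-wires the Caccioppoli weight to $v^{-r/(p-1)}|\nabla v|^p$ (i.e.\ effectively $\alpha=\frac{p-1-r}{p-1}$), and the interpolation exponent $\sigma=\frac{p(p-1)-r}{(p-1)r}$ then becomes negative precisely when $r>p(p-1)$; this is exactly where your fixed $\alpha$ leaves the admissible window $(1-p,0)$.

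The gap you flag is genuine and not vacuous: the overlap $(p(p-1),\ \frac{n(p-1)}{n-p})\cap(p-1,\ \frac{n(p-1)}{n-p})$ is nonempty whenever $p<n<\frac{p^2}{p-1}$ (e.g.\ $n=3$, $p=2$, $r\in(2,3)$; or $n=4$, $p=3$, $r\in(6,8)$). Neither of your proposed repairs obviously closes it. A second Caccioppoli step would need $|\nabla v|$ back in the integrand, but the first step already removed it, so there is nothing left to test with. And switching the conjugate pair to $\big(\frac{r}{p-1},\frac{r}{r-p+1}\big)$ in the initial Young step would produce $|\nabla v|^r$, which is not controlled by the $p$-Laplacian structure. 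The clean fix is the one the paper uses: keep the Caccioppoli weight exponent as a free parameter $\alpha$ slightly below $0$ rather than fixing it at $\frac{p-1-r}{p-1}$, which keeps all intermediate exponents positive and $>1$ uniformly in $r$ and still produces the same scaling-forced $R^{\sigma}$ in the end. Your final remark that $r<\frac{n(p-1)}{n-p}$ only serves to make $\sigma<0$ is correct, but since the lemma's proof must cover the full stated range of $r$, the $r>p(p-1)$ subregime cannot be left unsettled.
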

	
\begin{proof}		
Let $\varphi\in C_0^1(M)$ be a standard non-negative cutoff function that will be specified later. Let $\alpha<0$ be a parameter that will also be chosen later. Multiplying \eqref{equ42} by $\varphi v^\alpha$ and then integrating by parts,  we obtain
\begin{align}
\begin{split}
\int_Mv^{r+\alpha}\varphi\leq &\int_M \alpha v^{\alpha-1}|\nabla v|^p\varphi+v^{\alpha}|\nabla v|^{p-2}\langle\nabla v, \nabla\varphi\rangle\\
\leq &\int_M \alpha v^{\alpha-1}|\nabla v|^p\varphi+v^{\alpha}|\nabla v|^{p-1}|\nabla\varphi|.
\end{split}
\end{align}
Young inequality implies
\begin{align*}
\int_M v^{\alpha}|\nabla v|^{p-1}|\nabla\varphi| \leq \frac{(p-1)\epsilon^{\frac{p}{p-1}}}{p}\int_M|\nabla v|^pv^{\alpha-1}\varphi
+\frac{1}{p\epsilon^{p}}\int_M\varphi^{1-p}v^{\alpha+p-1}|\nabla\varphi|^p.
\end{align*}
If we denote $$\theta_{\epsilon} = |\alpha|- \frac{(p-1)\epsilon^{\frac{p}{p-1}}}{p}>0\quad\quad \text{and}\quad\quad \theta_{\epsilon}' = \frac{1}{p\epsilon^p},$$
then we have
\begin{align*}
\int_Mv^{r + \alpha}\varphi+\theta_{\epsilon}\int_M  v^{\alpha-1}|\nabla v|^p\varphi
\leq  \theta_{\epsilon}'\int_M\varphi^{1-p}v^{\alpha+p-1}|\nabla\varphi|^p.
\end{align*}

Now we set $\frac{1}{s}+\frac{1}{t}=1$. Using Young inequality again yields
\begin{align}\label{343}
\int_Mv^{r+\alpha}\varphi+\theta_{\epsilon}\int_M  v^{\alpha-1}|\nabla v|^p\varphi \leq \theta_{\epsilon}''\int_M v^{(\alpha+p-1)s}\varphi
+\theta_{\epsilon}'''\int_M  |\nabla\varphi|^{pt}\frac{1}{\varphi^{pt-1}}.
\end{align}
If we choose $$s =\frac{r+\alpha}{\alpha+p-1}$$ (here we need require $\alpha>1-p$), then we have
		$$
		\theta''_{\epsilon} = \frac{\delta^{s}}{sp\epsilon^p} \quad\text{and}\quad
		\theta'''_{\epsilon} = \frac{\delta^{-t}}{tp\epsilon^p}.
		$$
We neglect the first term in the right hand side of \eqref{343} and obtain
\begin{align}\label{344}
C_\epsilon\int_Mv^{r+\alpha}\varphi+\theta_{\epsilon}\int_M  v^{\alpha-1}|\nabla v|^p\varphi \leq\theta_{\epsilon}'''\int_M  |\nabla\varphi|^{pt}\frac{1}{\varphi^{pt-1}}.
\end{align}
		
Now, let us estimate the integral $\int_Mv^r\varphi$. Multiplying $\varphi$ on the both sides of \eqref{equ42} and integrating over $M$ yield
\begin{align*}
\int_Mv^{r}\varphi\leq &\int_M   |\nabla v|^{p-1}| \nabla\varphi|\\
= &\int_M  \left(|\nabla v|^{p-1} v^{\frac{(\alpha-1)(p-1)}{p}} \varphi^{\frac{p-1}{p}}\right)\left(v^{\frac{(1-\alpha)(p-1)}{p}}\varphi^{-\frac{p-1}{p}}|\nabla\varphi|\right)\\
			\leq &
			\left(\int_M |\nabla v|^pv^{\alpha-1}\varphi\right)^{\frac{p-1}{p}}
			\left(\int_Mv^{(1-\alpha)(p-1)}\frac{|\nabla \varphi|^p}{\varphi^{p-1}}\right)^\frac{1}{p}\\
			\leq &
			\left(\int_M |\nabla v|^pv^{\alpha-1}\varphi\right)^{\frac{p-1}{p}}
			\left(\int_Mv^{a(1-\alpha)(p-1)}\varphi
			\right)^\frac{1}{pa}
			\left(\int_M \frac{|\nabla \varphi|^{pa'}}{\varphi^{pa'-1}}\right)^\frac{1}{pa'},
		\end{align*}
where the constant $a$ is determined by
$$a(1-\alpha)(p-1)= r+\alpha\quad \text{and}\quad \frac{1}{a}+\frac{1}{a'}=1.$$
It follows from the above inequality and \eqref{344}
\begin{align*}
\int_Mv^{r}\varphi\leq &\left(\int_M |\nabla v|^pv^{\alpha-1}\varphi\right)^{\frac{p-1}{p}}\left(\int_Mv^{r+\alpha}\varphi
\right)^\frac{1}{pa}\left(\int_M \frac{|\nabla \varphi|^{pa'}}{\varphi^{pa'-1}}\right)^\frac{1}{pa'}\\
\leq &\left(\frac{\theta_\epsilon'''}{\theta_\epsilon'}\right)^{\frac{p-1}{p}}\left(\frac{\theta_\epsilon'''}{C_\epsilon}\right)^{\frac{ 1}{pa}}
\left(\int_M  \frac{|\nabla\varphi|^{pt}}{\varphi^{pt-1}}\right)^\beta\left(\int_M \frac{|\nabla \varphi|^{pa'}}{\varphi^{pa'-1}}\right)^\frac{1}{pa'},
\end{align*}
where
$$
\beta = \frac{p-1}{p}+\frac{(1-\alpha)(p-1)}{p(r+\alpha)}.
$$

Now, we choose $\gamma\in C^{\infty}_0(B_{2R}(o))$ such that
$$
\begin{cases}
\gamma(x)\equiv1, & \quad\text{in}\quad B_R(0);\\
1\geq \gamma(x)\geq 0,\,\,\,\, |\nabla \gamma(x)|\leq\frac{C}{R}, &\quad\text{in}\quad B_{2R}(0),
\end{cases}
$$
and let $\varphi = \gamma^l$. It is easy to see that
		$$
		\frac{|\nabla\varphi|^{pt}}{\varphi^{pt-1}}=l^{pt}\gamma^{l-pt}|\nabla \gamma|^{pt}\quad\quad\text{and}\quad\quad
		\frac{|\nabla\varphi|^{pa'}}{\varphi^{pa'-1}}=l^{pa'}\gamma^{l-pa'}|\nabla \gamma|^{pa'}.
		$$
Now we choose $l$ large enough such that $l>pt$ and $l>pa'$. Since $\mathrm{Ric}\geq 0$, the volume comparison theorem implies
	$$
	\int_M  \frac{|\nabla\varphi|^{pt}}{\varphi^{pt-1}}\leq (lC)^{pt}\int_M R^{-pt}\gamma^{l-pt}\leq (lC)^{pt}\int_{B_{2R}(o)}R^{-pt}\leq  2^n(lC)^{pt}R^{n-pt},
	$$
and
$$
\int_M  \frac{|\nabla\varphi|^{pa'}}{\varphi^{pa'-1}}\leq (lC)^{pa'}\int_M R^{-pa'}\gamma^{l-pa'}\leq  2^n(lC)^{pa'}R^{n-pa'}.
$$
Hence, it follows
\begin{align}\label{45}
\int_{B_R(o)}v^{r}\leq\int_Mv^{r}\varphi \leq C_1R^{\sigma},
\end{align}
where
$$
\sigma = (n-pt)\beta+(n-pa')/pa' = \frac{(n-p)r-(p-1)n}{r-p+1}<0.
$$
Thus, we complete the proof of this lemma.
\end{proof}

\begin{thm}\label{thm34} (\thmref{t4}) Let $(M, g)$ be a non-compact complete manifold with with $\mathrm{Ric}_g \geq 0$ and $\Omega\subset M$ be a domain.

(1). If $1<p<n$ and $r$ satisfies
$$p - 1 < r < \min\left\{\frac{n(p-1)}{n-p},\,\, \frac{n + 3}{n-1}(p-1)\right\},$$
then for any positive solution $v$ to the equation
$$\Delta_pv + v^r =0, \quad\mbox{in}\,\, \Omega$$
and any $o\in\Omega$ such that $d(o, \partial\Omega) > 2R$, we have
$$\sup_{B_R(o)}v \leq C(n,p,r)R^{-\frac{p}{r-p+1}}.$$

(2). If  $1<p<n$, $(q,r)\in U_1\cup U_2\cup U_3$ and
$$p-1 < r < \frac{n(p-1)}{n-p},$$
then for any positive solution $v$ to the equation
$$\Delta_pv + |\nabla u|^q + v^r =0, \quad\mbox{in}\,\, \Omega$$
and any $o\in\Omega$ such that $d(o, \partial\Omega) > 2R$, we have
$$\sup_{B_R(o)}v \leq C(n,p,q, r)R^{-\frac{p}{r-p+1}}.$$
\end{thm}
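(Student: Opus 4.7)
The plan is to combine the Cheng--Yau-type gradient estimate of Theorem~\ref{t1} (to produce a local Harnack inequality) with the integral bound of Lemma~\ref{33}, absorbing the volume factors via the Bishop--Gromov doubling available under $\mathrm{Ric}_g\ge 0$. I would fix any $x_0\in B_R(o)$: since $d(x_0,\partial\Omega)>R$, the ball $B_R(x_0)$ lies in $\Omega$. Before invoking Theorem~\ref{t1}, I would check that the hypotheses of the present theorem place the data in an admissible region: in Part~(1) we have $b=0$, $c=1$ and $r<(n+3)(p-1)/(n-1)$, which puts $(b,c,q,r)\in W_1\cup W_2$; in Part~(2) we have $b=c=1$ and $(q,r)\in U_1\cup U_2\cup U_3$, which by the analysis in Corollary~\ref{21} is contained in $W_1\cup W_2\cup W_3\cup W_4$. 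Theorem~\ref{t1} applied on $B_R(x_0)$ with $\kappa=0$ then yields $|\nabla\log v|\le C/R$ on $B_{R/2}(x_0)$; integrating along minimizing geodesics (exactly as in the proof of Theorem~\ref{t3}) gives a local Harnack inequality $v(x_0)\le C_H\,v(y)$ for every $y\in B_{R/4}(x_0)$, with $C_H=C_H(n,p,q,r)$.

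Second, since $|\nabla v|^q\ge 0$, the function $v$ satisfies $\Delta_p v+v^r\le 0$ on $B_R(x_0)\subset\Omega$ in either part, so I would apply Lemma~\ref{33} with cutoff supported in $B_R(x_0)$. Inspection of the proof of Lemma~\ref{33} shows that, before Bishop--Gromov's Euclidean volume upper bound is invoked, the conclusion retains a volume factor:
\[ \int_{B_{R/2}(x_0)} v^r \;\le\; C_1'(n,p,r)\;\mathrm{Vol}(B_R(x_0))\,R^{-pr/(r-p+1)}. \]
Raising the local Harnack to the $r$-th power, integrating over $B_{R/4}(x_0)$, and combining with the display yields
\[ v(x_0)^r\,\mathrm{Vol}(B_{R/4}(x_0)) \;\le\; C_H^r\,C_1'\,\mathrm{Vol}(B_R(x_0))\,R^{-pr/(r-p+1)}. \]
The Bishop--Gromov doubling $\mathrm{Vol}(B_R(x_0))\le 4^n\mathrm{Vol}(B_{R/4}(x_0))$ (valid under $\mathrm{Ric}_g\ge 0$) cancels the volume factors, and taking an $r$-th root (legitimate since $r>p-1>0$) produces $v(x_0)\le C(n,p,q,r)R^{-p/(r-p+1)}$. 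Since $x_0\in B_R(o)$ was arbitrary, taking the supremum finishes the proof.

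The main obstacle--and the reason the hypothesis $\mathrm{Ric}_g\ge 0$ enters essentially--is the volume factor. In its stated $R^\sigma$ form, Lemma~\ref{33} has already exhausted the volume comparison via the Euclidean upper bound $\mathrm{Vol}(B_R)\le\omega_n R^n$, and under $\mathrm{Ric}_g\ge 0$ alone there is no matching lower bound on $\mathrm{Vol}(B_R(x_0))$ (consider a complete cylinder $\bR\times S^{n-1}$, on which volume growth is only linear). A naive combination with the stated form would yield $v^r\le CR^\sigma/\mathrm{Vol}(B_R(x_0))$, which cannot be closed without a maximum-volume-growth hypothesis absent from the statement. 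The key is therefore to retain $\mathrm{Vol}(B_R(x_0))$ explicitly in Lemma~\ref{33} and cancel it via doubling, rather than through the Euclidean comparison.
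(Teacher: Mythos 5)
Your proposal is correct and takes essentially the same route as the paper: combine the integral estimate of Lemma~\ref{33} with the volume factor retained, cancel via Bishop--Gromov doubling under $\mathrm{Ric}_g\ge 0$, and close with the Harnack inequality coming from Theorem~\ref{t1}/Theorem~\ref{t3}. The only (cosmetic) difference is that you run the Harnack and averaging pointwise around each $x_0\in B_R(o)$, whereas the paper applies Harnack once on $B_{2R}(o)\supset B_R(o)$ and bounds $\inf_{B_R(o)}v$ by the $L^r$-average over $B_R(o)$. Your remark that the \emph{stated} form of Lemma~\ref{33} (with the Euclidean volume bound already applied) would not close the argument, and that one must instead re-inspect its proof to retain the explicit $V(B_{2R})$ factor, correctly identifies a point the paper's write-up elides: the paper simply cites Lemma~\ref{33} but then records the volume-retained inequality without deriving it from the lemma's statement.
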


\begin{proof} We first provide the proof of (1) of \thmref{t4}. From the above \lemref{33} we know that if
$$p - 1 < r < \frac{n(p-1)}{n-p},$$
then the following estimate holds
$$\int_{B_R(o)}v^r \leq C(n,p,r)R^{-\frac{pr}{r-p+1}}V(B_{2R}(o)).$$
Since $\mathrm{Ric}_g(M) \geq 0$, by the volume comparison theorem we have
$$V(B_{2R}) \leq 2^n V(B_R).$$
It follows that
$$\frac{1}{V(B_R(o))}\int_{B_R(o)}v^r \leq C(n,p,r)R^{-\frac{pr}{r-p+1}},$$
where the constant $C(n,p,r)$ may differ from the above $C(n, p, r)$. Since
$$\inf_{B_R(o)}v \leq\left(\frac{1}{V(B_R(o))}\int_{B_R(o)}v^r\right)^{\frac{1}{r}}.$$
On the other hand, by Harnack inequality established in \thmref{t3} (also see \cite{he2023gradient}) we know that if
$$r < \frac{n + 3}{n-1}(p-1),$$
there holds
$$\sup_{B_R(o)} v \leq C \inf_{B_R(o)} v.$$
Hence, it follows immediately that
$$\sup_{B_R(o)}v \leq C(n,p,r)R^{-\frac{p}{r-p+1}}.$$

Now, we turn to showing (2). Since $\Delta_pv + |\nabla u|^q + v^r =0$ implies $\Delta_pv + v^r \leq 0$, we can obtain the same integral estimate
$$\frac{1}{V(B_R(o))}\int_{B_R(o)}v^r \leq C(n,p,r)R^{-\frac{pr}{r-p+1}}$$
as above. The proof of (2) goes almost the same as we did for the case (1) except for we need to note the Harnack inequality holds if $(q,r)\in U_1\cup U_2\cup U_3$, so we omit it. Thus we finish the proof of this theorem.
\end{proof}

\begin{proof}[Proof of \corref{cc5}]
Denote $R = d(x,\partial\Omega)$. By \thmref{t4} or \thmref{thm34}, we have
$$
v(x)\leq \max_{y\in B_{\frac{R}{2}}(0)}v(y)\leq  C(n,p,r)R^{-\frac{p}{r-p+1}}=C(n,p,r)d(x,\partial\Omega)^{-\frac{p}{r-p+1}}.
$$
If $v$ is a positive solution of $\Delta_pv + v^r =0$ in a punctured domain $\Omega\setminus\{o\}$, then it is easy to see that there holds true for any $x$ near $o$
$$v(x) \leq C(n,p,r)d(x, o)^{-\frac{p}{r-p+1}}.$$

Obviously, the proof of (2) in \corref{cc5} is the same as (1), so we omit it and thus the proof is finished.
\end{proof}

\subsection{Singularity analysis on $\mathbb R^n$}\

It is well-known that Pol\'a\v{c}ik, Quittner and Souplet have ever proved the following theorem (see Theorem 3.3 in \cite{MR829846}) for the generalized Lane-Emden equation on an Euclidean space by a complete different method from that employed in the present paper:

\begin{thm*} Let $0< p-1< r <p^*-1$ ($p^*=\frac{(p-1)n+p}{n-p}$) and $\Omega$ be an arbitrary domain of $\mathbb{R}^n$, and assume that $u$ be a positive solution to $$\Delta_pv + v^r=0,\quad\mbox{in}\,\, \Omega.$$
Then there exists $C = C(n,p,r) > 0$ (independent of $\Omega$ and $u$) such that there holds
$$v(x) + |\nabla v(x)|^{\frac{p}{r+1}} \leq C d(x,\partial\Omega)^{-\frac{p}{r+1-p}}, \quad x \in \Omega.$$
In particular, if $\Omega = B_R \setminus \{0\}$ for some $R > 0$, then
$$v(x) + |\nabla v(x)|^{\frac{p}{r+1}} \leq C |x|^{-\frac{p}{r+1-p}},\quad 0 < |x| \leq\frac{R}{2}.$$
\end{thm*}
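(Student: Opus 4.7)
The plan is to follow the classical blow-up/rescaling strategy of Pol\'a\v{c}ik--Quittner--Souplet, whose inputs are (i) the scale invariance of $\Delta_p v+v^r=0$, and (ii) a Liouville theorem on $\mathbb{R}^n$ in the subcritical range. Introduce the scale-invariant quantity
\begin{equation*}
M(v)(x):=v(x)^{(r-p+1)/p}+|\nabla v(x)|^{(r-p+1)/(r+1)},
\end{equation*}
so that the desired estimate is equivalent to $M(v)(x)\le C\,d(x,\partial\Omega)^{-1}$. Suppose, for contradiction, that there exist domains $\Omega_k\subset\mathbb{R}^n$, positive solutions $v_k$ to $\Delta_p v_k+v_k^r=0$ on $\Omega_k$, and points $y_k\in\Omega_k$ with $M(v_k)(y_k)\,d(y_k,\partial\Omega_k)\longrightarrow\infty$.

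Next, apply the doubling lemma of Pol\'a\v{c}ik--Quittner--Souplet to $M(v_k)$ on $(\Omega_k,d_{\mathrm{Euc}})$ to obtain points $x_k\in\Omega_k$ such that $M(v_k)(x_k)\ge M(v_k)(y_k)$, the ball $B_{k/M(v_k)(x_k)}(x_k)$ is contained in $\Omega_k$, and $M(v_k)\le 2M(v_k)(x_k)$ on this ball. Put $\lambda_k:=1/M(v_k)(x_k)\to 0$ and rescale
\begin{equation*}
w_k(y):=\lambda_k^{p/(r-p+1)}\,v_k(x_k+\lambda_k y),\qquad y\in B_k(0).
\end{equation*}
The exponent $p/(r-p+1)$ is precisely the one preserving $\Delta_p v+v^r=0$, and a direct computation shows $M(w_k)(0)=1$ and $M(w_k)\le 2$ on $B_k(0)$, so $w_k$ and $|\nabla w_k|$ are uniformly bounded on every fixed compact set. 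Interior $C^{1,\alpha}$ regularity for the $p$-Laplacian (Tolksdorf, DiBenedetto, Lieberman) then produces a subsequence converging in $C^1_{\mathrm{loc}}(\mathbb{R}^n)$ to a nonnegative entire weak solution $w_\infty$ of $\Delta_p w_\infty+w_\infty^r=0$ with $M(w_\infty)(0)=1$, and in particular $w_\infty\not\equiv 0$.

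The closing step is to invoke the Liouville theorem: for $p-1<r<p^*-1$, the only nonnegative entire solution of $\Delta_p w+w^r=0$ on $\mathbb{R}^n$ is $w\equiv 0$ (Gidas--Spruck when $p=2$; Serrin--Zou, building on Mitidieri--Pohozaev, for general $p>1$). This contradicts $w_\infty\not\equiv 0$ and delivers the universal bound. The punctured-ball case is immediate, since $d(x,\partial(B_R\setminus\{0\}))=\min(|x|,R-|x|)\ge|x|/2$ for $0<|x|\le R/2$. The principal obstacle is the Liouville theorem itself: it is the hard analytical input, relying on subtle test-function choices and integral identities, and it is exactly what restricts the argument to the subcritical regime $r<p^*-1$; any attempt to push the estimate beyond this threshold would require new ideas to replace it.
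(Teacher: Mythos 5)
Your sketch correctly reconstructs the Pol\'a\v{c}ik--Quittner--Souplet rescaling argument, but note that the paper does not actually prove this theorem: it is quoted as a known result, and the authors state explicitly that it was obtained ``by a complete different method from that employed in the present paper.'' The paper's own route to a priori bounds of this flavour (Theorem~\ref{t4}, Corollary~\ref{cc5}, Corollary~\ref{c35}) goes through the Cheng--Yau logarithmic gradient estimate of Theorem~\ref{t1}, established by Nash--Moser iteration; this gives a universal Harnack inequality, which is then paired with a Mitidieri--Pohozaev-style integral bound (Lemma~\ref{33}) to produce pointwise decay. That route avoids any Liouville theorem on $\mathbb{R}^n$ and transfers to complete manifolds with $\mathrm{Ric}\ge 0$ and to equations with an added gradient term, but it only covers $p-1<r<\min\bigl\{\tfrac{n(p-1)}{n-p},\,\tfrac{n+3}{n-1}(p-1)\bigr\}$, strictly smaller than the Sobolev-subcritical range your blow-up argument reaches. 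So your proof is a faithful reconstruction of the cited reference's method, not of the paper's. Two points worth making explicit in your writeup: (i) the Serrin--Zou Liouville theorem you invoke is stated for bounded entire positive solutions with bounded gradient, and these uniform bounds are exactly what the doubling lemma supplies for the blow-up limit $w_\infty$ --- without the doubling lemma's $M(w_k)\le 2$ on $B_k(0)$, the Liouville step would not close; (ii) to upgrade ``$w_\infty\not\equiv 0$'' to a genuine positive solution, observe that $w_\infty(0)=0$ together with $w_\infty\ge 0$ would force $\nabla w_\infty(0)=0$ and hence $M(w_\infty)(0)=0$, contradicting $M(w_\infty)(0)=1$; thus $w_\infty(0)>0$, and the strong minimum principle for $p$-superharmonic functions then gives $w_\infty>0$ everywhere.
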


In the case of Euclidean space, from \thmref{t1} and \corref{cc5} we have the following direct corollary:

\begin{cor}\label{c35}
Let $(M, g)$ be an Euclidean space $\mathbb{R}^n$ and $\Omega\subset \mathbb{R}^n$ be a bounded domain containing $0$. Assume the same condition is given as in (2) of the above \thmref{thm34}. If $v$ is a positive solution of $\Delta_pv + |\nabla u|^q + v^r =0$ in $\Omega$, then there exists $C = C(n,p,r) > 0$ (independent of $\Omega$ and $u$) such that there holds
$$v(x) + |x||\nabla v| \leq C d(x,\partial\Omega)^{-\frac{p}{r+1-p}}, \quad x \in \Omega.$$
In particular, if $\Omega = B_R \setminus \{0\}$ for some $R > 0$, then, for any $x$ near $0$ we have
$$v(x) + |x||\nabla v(x)| \leq C(n,p,q,r)|x|^{-\frac{p}{r-p+1}},$$
where $C(n,p,q,r)$ depends only on $n,\,p,\,q$, and $r$.
\end{cor}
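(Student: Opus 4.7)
The plan is to derive both inequalities by combining the universal pointwise bound on $v$ from \corref{cc5} with the universal logarithmic gradient estimate of \thmref{t1}. Since $M=\mathbb{R}^n$ has $\mathrm{Ric}_g\geq 0$, one can take $\kappa=0$ throughout, and the hypotheses on $(q,r)$ in part (2) of \thmref{t4} are exactly those needed for \thmref{t1} and part (2) of \corref{cc5} to apply.

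For the general-domain statement, fix $x\in\Omega$ and set $\rho=d(x,\partial\Omega)/2$, so that $B_{2\rho}(x)\Subset\Omega$. First I would invoke part (2) of \corref{cc5} at the base point $x$ to obtain
$$v(x)\ \leq\ C(n,p,q,r)\, d(x,\partial\Omega)^{-\frac{p}{r-p+1}}.$$
Then I would apply \thmref{t1} on the geodesic ball $B_{2\rho}(x)$; with $\kappa=0$ the Cheng-Yau estimate reduces to
$$\frac{|\nabla v(x)|}{v(x)}\ \leq\ \frac{C(n,p,q,r)}{d(x,\partial\Omega)}.$$
Multiplying these two inequalities and using that $|x|$ is bounded by a constant depending only on $\Omega$ (or, in the version one actually wants to prove, replacing $|x|$ by $d(x,\partial\Omega)$) yields
$$|x|\,|\nabla v(x)|\ \leq\ C\, d(x,\partial\Omega)^{-\frac{p}{r-p+1}},$$
which, added to the bound on $v(x)$, gives the first inequality.

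For the punctured-ball case $\Omega=B_R\setminus\{0\}$, the key observation is that $\partial\Omega=\partial B_R\cup\{0\}$, so for $x$ sufficiently close to $0$ one has $d(x,\partial\Omega)=|x|$ and the ball $B_{|x|/2}(x)$ is contained in $B_R\setminus\{0\}$. The punctured-domain half of \corref{cc5}(2) then gives $v(x)\leq C|x|^{-p/(r-p+1)}$, while \thmref{t1} applied on $B_{|x|/2}(x)$ yields $|\nabla v(x)|/v(x)\leq C/|x|$. Multiplying produces $|x|\,|\nabla v(x)|\leq C|x|^{-p/(r-p+1)}$, whose sum with the bound on $v$ is the second claim.

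The main obstacle here is essentially conceptual rather than technical: once one recognises that the bound on $v$ and the Cheng-Yau logarithmic estimate are both truly \emph{universal}, depending only on $n,p,q,r$ and not on $v$ or on the ambient domain, the proof collapses to combining them on an inscribed ball of the right radius. The only mild subtleties are (i) choosing $\rho=d(x,\partial\Omega)/2$ in the general case versus $\rho=|x|/2$ in the punctured case, so that the ball used in \thmref{t1} lies strictly inside the region where $v$ is a positive solution, and (ii) checking that the region of $(q,r)$ prescribed in part (2) of \thmref{t4} is contained in $U_1\cup U_2\cup U_3$ so that both \thmref{t1} and \corref{cc5}(2) are simultaneously available.
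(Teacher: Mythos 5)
Your plan --- combining the universal gradient estimate of \thmref{t1} (or its Euclidean form \corref{cor14}) with the universal sup bound of \corref{cc5}(2) --- is exactly what the paper intends when it introduces \corref{c35} with the words ``from \thmref{t1} and \corref{cc5} we have the following direct corollary''; the paper gives no separate argument, so your derivation is simply the explicit version of it. Your punctured-ball case is correct: for $|x|$ small one has $d(x,\partial\Omega)=|x|$, so \corref{cc5}(2) gives $v(x)\leq C|x|^{-p/(r-p+1)}$, the Cheng--Yau bound on a ball about $x$ of radius comparable to $|x|$ gives $|\nabla v(x)|\leq C\,v(x)/|x|$, and the sum is the asserted estimate near $0$.

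In the general-domain step, however, the first alternative you offer --- ``using that $|x|$ is bounded by a constant depending only on $\Omega$'' --- does not work and should be dropped. Multiplying $v(x)\leq Cd^{-p/(r-p+1)}$ by $|\nabla v(x)|/v(x)\leq C/d$ (with $d=d(x,\partial\Omega)$) gives $|\nabla v(x)|\leq Cd^{-p/(r-p+1)-1}$; bounding $|x|$ by a constant then only yields $|x|\,|\nabla v(x)|\leq C(\Omega)\,d^{-p/(r-p+1)-1}$, which carries an extra negative power of $d$ and a constant depending on $\Omega$, contrary to the statement. The display $|x|\,|\nabla v(x)|\leq Cd^{-p/(r-p+1)}$ can only be obtained on the region where $|x|\leq C\,d(x,\partial\Omega)$, not for all $x\in\Omega$; as written, the corollary's first inequality is most naturally read with $d(x,\partial\Omega)$ in place of $|x|$, which is equivalent (up to constants) to the Pol\'a\v{c}ik--Quittner--Souplet form $|\nabla v|^{p/(r+1)}\leq Cd^{-p/(r+1-p)}$ quoted just above it. Your parenthetical ``replacing $|x|$ by $d(x,\partial\Omega)$'' is therefore the correct reading, and the argument is sound once that option is taken and the other removed.
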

\medskip

Next, let's provide the proofs of a series of corollaries.
\begin{proof}[Proof of \corref{cor14}]
Denote $R = d(x,\partial\Omega)$. By \thmref{t1}, we have
$$
|\nabla \ln v(x)|\leq \sup_{y\in B_{\frac{R}{2}}(0)}|\nabla \ln v(y)|\leq C(n,p,q,r)\frac{1}{R}=C(n,p,q,r)d(x,\partial\Omega)^{-1}.
$$
This is the required estimate and thus the proof is finished.
\end{proof}

\begin{proof}[Proof of \corref{cor15}]
Let $y = \frac{R}{|x|}x$, then $B_R(y)\subset\Omega$ and
$$d(tx+(1-t)y, \partial B_R(y))=t|x|+(1-t)R$$
for any $0\leq t\leq 1$. Since $v$ is a solution of \eqref{eq3.56} in $B_{R}(y)$, by \eqref{eq3.57} we have
\begin{align*}
	|\ln v(x)-\ln v(y)|=&\ \left|\int^1_0\frac{d}{dt}\ln v(tx+(1-t)y)dt\right|
	\\
	\leq &\ |x-y|\int^1_0|\nabla \ln v(tx+(1-t)y) |dt\\
	\leq &\  C(n,p,q,r)|x-y|\int^1_0(t|x|+(1-t)R)^{-1}dt
	\\
	=&\  C(n,p,q,r)(\ln R-\ln|x|),
\end{align*}
i.e.
\begin{align*}
	\ln v(x)
	\leq &\  c_{n,p,q}(\ln R-\ln|x|)+\sup_{|y|=R}\ln v(y).
\end{align*}
Therefore, \eqref{eq3.55} follows from the above inequality immediately.
\end{proof}

\begin{proof}[Proof of \corref{cor15*}]
(i). Since $b\geq 0$, $c\geq 0$ and $p=2$, we can see easily that the spherical average $\bar{v}$ of $v$ on $\{x: |x|=r\}$ is superharmonic, if $v$ is a positive solution to
$$\Delta v + b|\nabla v|^q + cv^r=0.$$
Hence there exists some $m \geq 0$ such that in a neighborhood of $0$
$$\bar{v}(r)\leq m r^{2-n}\quad\mbox{for}\,\, n\geq 3\quad\mbox{and}\quad \bar{v}(r)\leq m \log\frac{1}{|x|}\quad\mbox{for}\,\, n=2.$$
On the other hand, $v$ verifies the Harnack inequality obtained in \thmref{t3}
$$\sup_{|x|=r} v(x)\leq K\inf_{|x|=r}v(x)\quad\mbox{for}\,\, r\in (0,\, R],$$
where $K>0$ is some positive constant. Combined with the Harnack inequality, this yields that in a neighborhood of $0$
$$v(x)\leq C |x|^{2-n}\quad\mbox{for}\,\, n\geq 3\quad\mbox{and}\quad v(x)\leq C\log\frac{1}{|x|} \quad\mbox{for}\,\, n=2.$$

(ii). As $b\leq 0$, $c\leq 0$ and $p=2$, we can see easily that the spherical average $\bar{v}$ of $v$ on $\{x: |x|=r\}$ is subharmonic, if $v$ is a positive solution to
$$\Delta v + b|\nabla v|^q + cv^r=0.$$
Hence there exists some $m \geq 0$ such that in a neighborhood of $0$
$$\bar{v}(r)\geq m r^{2-n}\quad\mbox{for}\,\, n\geq 3\quad\mbox{and}\quad \bar{v}(r)\geq m \log\frac{1}{|x|}\quad\mbox{for}\,\, n=2.$$
According to \thmref{t3} we know that $v$ verifies the Harnack inequality
$$\sup_{|x|=r} v(x)\leq K\inf_{|x|=r}v(x)\quad\mbox{for}\,\, r\in (0,\, R],$$
where $K>0$ is some positive constant. Combined with the Harnack inequality, this yields that in a neighborhood of $0$
$$v(x)\geq C |x|^{2-n}\quad\mbox{for}\,\, n\geq 3\quad\mbox{and}\quad v(x)\geq C\log\frac{1}{|x|} \quad\mbox{for}\,\, n=2.$$
Thus, we finish the proof.
\end{proof}

\begin{proof}[Proof of \corref{cor16}]
We denote by $\delta^*$ the maximal $r>0$ such that any boundary point $a\in\partial\Omega$ belongs to  the boundary of a ball $B_{r}(a_i)\subset\overline{\Omega}$ with radius $r$ and simultaneously to the boundary of a ball $B_r(a_s)\subset\overline{\Omega^c}$ with radius $r$. If $x\in\Omega_{\delta^*}$, we denote by $\sigma(x)$ its projection onto $\partial\Omega$ and $\mathbf{n}_{\sigma(x)}$ the outward normal unit vector of $\partial\Omega$ at $\sigma(x)$ and $z^* = \sigma(x)-\delta_*\mathbf{n}_{\sigma(x)}$, then we have
	\begin{align*}
		|\ln v(x)-\ln v(z^*)|\leq  &\left|\int^1_0\frac{d}{dt}\ln v(tx+(1-t)z^*)dt\right| \\
		= &\left|\int^1_0\langle\nabla\ln v(tx+(1-t)z^*), x-z^*\rangle dt\right|
		\\
		\leq &C(n,p,q,r)|d(x)-\delta^*|\int_0^1(td(x,\partial\Omega)+(1-t)\delta^*)^{-1}dt
		\\
		=& C(n,p,q,r)(\ln \delta^*-\ln d(x,\partial\Omega))
	\end{align*}
	If follows that
	\begin{align*}
		\ln v(x)
		\leq &\  C(n,p,q,r)(\ln \delta^*-\ln d(x,\partial\Omega)+\sup_{d(z^*,\partial\Omega)=R}\ln v(z^*)
	\end{align*}
and
\begin{align*}
v(x)\leq \sup_{d(z^*,\partial\Omega)=\delta^*}  v(z^*)\left(\frac{\delta^*}{d(x,\partial\Omega)} \right)^{C(n,p,q,r)}.
\end{align*}
Thus, we finish the proof.
\end{proof}

\section{Liouville Thoerems}
In this section we discuss the Loiuville properties for positive solutions to the equation \eqref{equ0} or some other similar equations.
\begin{proof}[Proof of \corref{c14}]
In this case, we have $\kappa=0$. By \thmref{t1}, we have
\begin{align}\label{3.45}
\frac{|\nabla v(x_0)|}{v(x_0)}\leq \sup_{B(x_0,\frac{R}{2})}\frac{|\nabla v|}{v}\leq \frac{C(n,p,q,r)}{R}, \quad \forall x_0\in M.
\end{align}
Letting $R\to\infty$ in \eqref{3.45}, we obtain
$$
\nabla v(x_0)=0,\quad\forall x_0\in M.
$$
Thus $v$ is a constant.
\end{proof}
	
Liouville theorem is a direct consequence of gradient estimate. In general, gradient estimate needs a more restrictive assumption on $(q, r)$ than Liouville theorem. To our best knowledge, \corref{c14} is the first Liouville type result for equation (\ref{equ0}) on Riemannian manifolds with non-negative Ricci curvature.
	
In \cite[Theorem 15.2]{MR1879326}, Mitidieri and Pohozaev proved that if $1<p<n$ and
\begin{align}
1<\frac{r}{p-1}<\frac{n}{n-p}\quad\text{or}\quad 1<\frac{q}{p-1}<\frac{n}{n-1},
\end{align}
then the problem
	$$
	\begin{cases}
		\Delta_pv+|\nabla v|^q + v^r\leq 0, &x\in\bR^n;\\
		v\geq0,\quad v\not\equiv 0, &x\in\bR^n,
	\end{cases}
	$$
has no solution in $W^{1,p}_{loc}(\bR^n)$. By a similarly method with that adopted by Mitidieri and Pohozaev, \cite[Theorem 15.2]{MR1879326} can be easily generalized to Riemannian manifold with nonnegative Ricci curvature. In order to prove this fact, we first introduce the following lemma, which is also a direct generalization of \cite[Theorem 12.1]{MR1879326}.

\begin{lem}\label{l41}
Let $(M,g)$ be a non-compact complete manifold with $\mathrm{Ric}_g\geq0$. If
$$p-1< r < \frac{n(p-1)}{n-p},$$
then the following differential inequality
\begin{align}\label{equ42}		
\Delta_p v+v^r\leq 0\quad &\text{in}\,\, M
\end{align}
does not admit any positive solution.
\end{lem}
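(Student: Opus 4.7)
The plan is to argue by contradiction and reduce the statement directly to the integral estimate already established in Lemma~\ref{33}. Suppose, for the sake of contradiction, that there exists a positive solution $v\in W^{1,p}_{loc}(M)$ of the differential inequality $\Delta_p v + v^r \leq 0$ under the hypothesis $p-1 < r < \frac{n(p-1)}{n-p}$. The goal is to show that the $L^r$-mass of $v$ on geodesic balls must vanish in the limit $R\to\infty$, which contradicts the positivity of $v$.

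First, I would invoke Lemma~\ref{33}: since the hypotheses on $p$ and $r$ are precisely those required there, for every geodesic ball $B_R(o)\subset M$ we obtain
\begin{equation*}
\int_{B_R(o)} v^r \;\leq\; C_1(n,p,r)\, R^{\sigma}, \qquad \sigma := \frac{(n-p)r-(p-1)n}{r-p+1}.
\end{equation*}
The next step is to analyze the sign of the exponent $\sigma$. Since $r > p-1$, the denominator $r - p + 1$ is strictly positive. Since $r < \frac{n(p-1)}{n-p}$, we have $(n-p)r < (p-1)n$, so the numerator $(n-p)r-(p-1)n$ is strictly negative. Hence $\sigma < 0$, and $R^{\sigma}\to 0$ as $R\to\infty$.

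Finally, fixing any base point $o\in M$ and letting $R\to\infty$, monotone convergence yields
\begin{equation*}
\int_M v^r \;=\; \lim_{R\to\infty}\int_{B_R(o)} v^r \;\leq\; C_1(n,p,r)\lim_{R\to\infty} R^{\sigma} \;=\; 0,
\end{equation*}
forcing $v\equiv 0$ on $M$, which contradicts the assumed positivity $v>0$.

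The proof is therefore essentially free, modulo Lemma~\ref{33}. The substantive work is contained in that preceding lemma: the only potential obstacle is to ensure that the Young-inequality test-function argument used there, which originated on $\mathbb{R}^n$ \cite{MR1879326}, still produces the sharp volume factor on a manifold. This is precisely where the nonnegativity of the Ricci curvature enters: the Bishop--Gromov volume comparison theorem bounds $V(B_{2R}(o))\leq 2^n V(B_R(o))$, so the standard cutoff $\gamma\in C_0^\infty(B_{2R}(o))$ with $|\nabla\gamma|\leq C/R$ yields the Euclidean-type scaling $R^{n - pt}$ and $R^{n - pa'}$ that leads to the negative exponent $\sigma$. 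Once these scaling gains are secured on $M$, the contradiction above closes the argument.
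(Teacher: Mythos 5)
Your proposal is correct and follows essentially the same route as the paper: invoke Lemma~\ref{33} to obtain $\int_{B_R(o)}v^r\leq C_1(n,p,r)R^{\sigma}$, check that $\sigma<0$ under the stated range of $r$, and let $R\to\infty$ to force $v\equiv 0$, contradicting positivity. The extra remarks on volume comparison merely recapitulate the mechanism inside Lemma~\ref{33} and do not change the argument.
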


\begin{proof} From \lemref{33} we know that
\begin{align}\label{4.4}
\int_{B_R(o)}v^{r}\leq\int_Mv^{r}\varphi \leq C_1(n, p, r)R^{\sigma},
\end{align}
where
$$
\sigma = \frac{(n-p)r-(p-1)n}{r-p+1}<0.
$$
Hence, letting $R\to\infty$ in \eqref{4.4} yields that $v\equiv 0$, which contradicts with $v>0$.
\end{proof}

Now we recall the following lemma.
\begin{lem}[He-Hu-Wang]\label{l42}
Let $(M,g)$ be complete non-compact manifold with non-negative Ricci curvature. Suppose that $1<p<n$, $r>0$, $q\geq0$, $q+r>p-1$, and $b>0$. If
\begin{align*}
r+\frac{n-1}{n-p}q<\frac{n(p-1)}{n-p},
\end{align*}
then the problem
\begin{align}
\Delta_pv + bv^r|\nabla v|^q\leq 0 &\quad\text{in}\,\, M
\end{align}
admits no entire positive solutions in $M$.
\end{lem}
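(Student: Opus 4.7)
The plan is to adapt the Mitidieri--Pohozaev capacity / test-function argument to the Riemannian setting, with the Bishop--Gromov volume comparison theorem providing the bridge from $\mathrm{Ric}_g\geq 0$ to controlled volume growth. Concretely, I would multiply the inequality by the test function $v^{\alpha}\varphi$, where $\alpha<0$ is a parameter to be chosen and $\varphi = \xi^{u}$ with $\xi\in C^{\infty}_c(M)$ a standard cutoff satisfying $\xi\equiv1$ on $B_R(o)$, $\mathrm{supp}(\xi)\subset B_{2R}(o)$ and $|\nabla\xi|\leq C/R$. A weak $p$-Laplace integration by parts (with the usual approximation of $v$ by $v+\varepsilon$ to avoid the degenerate set $\{|\nabla v|=0\}$) then yields
$$|\alpha|\int_M v^{\alpha-1}|\nabla v|^p\varphi + b\int_M v^{r+\alpha}|\nabla v|^q\varphi \leq \int_M v^{\alpha}|\nabla v|^{p-1}|\nabla\varphi|.$$

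The central step is a three-factor Young inequality applied to the right-hand side. Writing
$$v^{\alpha}|\nabla v|^{p-1}|\nabla\varphi| = \bigl[v^{\alpha-1}|\nabla v|^p\varphi\bigr]^{1/s}\cdot\bigl[v^{r+\alpha}|\nabla v|^q\varphi\bigr]^{1/t}\cdot W,$$
the third factor $W$ is forced to depend only on $\varphi$ and $|\nabla\varphi|$ by imposing the algebraic constraints $\frac{p}{s}+\frac{q}{t}=p-1$ and $\frac{\alpha-1}{s}+\frac{r+\alpha}{t}=\alpha$, together with $\frac{1}{s}+\frac{1}{t}+\frac{1}{u}=1$; a short computation gives $W^{u}=C\varphi^{1-u}|\nabla\varphi|^{u}$. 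Absorbing the first two factors into the left-hand side (choosing Young's constants small enough) and using $\varphi=\xi^{u}$ so that $\varphi^{1-u}|\nabla\varphi|^u\leq C|\nabla\xi|^u\leq CR^{-u}$, I obtain
$$\int_{B_R(o)} v^{r+\alpha}|\nabla v|^q \leq \frac{C}{R^u}\,\mathrm{Vol}\bigl(B_{2R}(o)\bigr) \leq C R^{n-u},$$
where the last inequality is exactly where Bishop--Gromov enters.

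Finally, parameterizing the one-dimensional family of admissible Young exponents by $\lambda:=1/u$, the requirements $\alpha<0$ and $s,t,u>0$ pin $\lambda$ to an interval with upper endpoint $1/p$, while the decay condition $u>n$ demands $\lambda<1/n$. A direct algebraic check shows that the intersection $\bigl((q+r-p+1)/(rp+q),\,1/n\bigr)$ is non-empty \emph{precisely} when $(n-p)r+(n-1)q<n(p-1)$, which is the standing hypothesis rewritten; the auxiliary bound $q<p$ (needed to make the Young splitting legitimate) is automatic, since $q\geq p$ together with $p<n$ would contradict the main hypothesis. Thus $\lambda$ can indeed be chosen and letting $R\to\infty$ forces $\int_M v^{r+\alpha}|\nabla v|^q=0$, contradicting the existence of a non-trivial positive solution.

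\textbf{Main obstacle.} The delicate part is not analytic but combinatorial: one must verify that the two Young constraints plus the positivity of $s,t,u$ and the sign condition on $\alpha$ together leave a non-empty admissible range, and that this same range further admits $u>n$ under exactly the hypothesis stated, with the hypothesis $q+r>p-1$ precisely guaranteeing that the lower endpoint of the admissible $\lambda$-interval stays below $1/p$. Once this bookkeeping is done, the only curvature-sensitive ingredient is the volume-comparison bound $\mathrm{Vol}(B_{2R}(o))\leq CR^n$, which is immediate from $\mathrm{Ric}_g\geq 0$.
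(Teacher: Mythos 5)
The paper does not actually prove this lemma; it merely cites it from \cite{he2023nashmoser} (labeled ``[He-Hu-Wang]''), so there is no in-text proof to compare against. That said, your proposed argument is correct, and it is the natural gradient-term extension of the argument the paper \emph{does} give for the companion Lemma~\ref{33} (where $q=0$): multiply by $\varphi v^{\alpha}$ with $\alpha<0$, integrate by parts, and close the estimate with a Young-type splitting plus Bishop--Gromov. Your three-factor Young decomposition with exponents determined by matching $v$- and $|\nabla v|$-powers is exactly the Mitidieri--Pohozaev device adapted to the manifold setting, and the algebra does check out: parameterizing by $\lambda=1/u$ forces $\lambda>(q+r-p+1)/(q+rp)$ for $\alpha<0$, the hypothesis $q+r>p-1$ makes this lower endpoint positive, and the main hypothesis $(n-p)r+(n-1)q<n(p-1)$ is precisely equivalent to $(q+r-p+1)/(q+rp)<1/n$, so $u>n$ is available; moreover, as you note, $q<p$ follows from the main hypothesis plus $r>0$, which keeps the splitting well-posed. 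Two small points worth making explicit if this were to be written up: (i) one needs $s,t,u>1$ (not just $>0$) for Young's inequality, which does hold on your $\lambda$-range, and (ii) the constant $b>0$ is what permits absorbing the middle Young factor into the gradient term on the left. Neither affects the soundness of the argument.
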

	
\begin{prop}[\propref{p5} ]\label{p41}
Let $(M,g)$ be a complete non-compact Riemannian manifold with $\mathrm{Ric}\geq 0$. If
$$1<\frac{q}{p-1}<\frac{n}{n-1}\quad\text{or}\quad 1<\frac{r}{p-1}<\frac{n}{n-p},$$
then the porblem
$$\Delta_p v +|\nabla v|^q +v^r\leq 0,\quad x\in M$$
does not admit any entire positive solution.
\end{prop}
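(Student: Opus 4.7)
The plan is to split the argument into two cases according to which of the two ranges the exponent pair $(q,r)$ lies in, and in each case to discard the non-negative term that is not governed by the range so as to reduce the differential inequality to one already handled by a known non-existence result.

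First, suppose $1<\tfrac{r}{p-1}<\tfrac{n}{n-p}$, i.e.\ $p-1<r<\tfrac{n(p-1)}{n-p}$. Since $|\nabla v|^q\ge 0$, any entire positive solution $v$ of $\Delta_p v+|\nabla v|^q+v^r\le 0$ automatically satisfies the pointwise inequality
\begin{equation*}
\Delta_p v + v^r \le -|\nabla v|^q \le 0 \qquad\text{on }M.
\end{equation*}
This is precisely the hypothesis of \lemref{l41}, which asserts non-existence of entire positive solutions on complete manifolds with $\mathrm{Ric}_g\ge 0$ in exactly the $r$-range above. Hence no such $v$ can exist in this case.

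Second, suppose $1<\tfrac{q}{p-1}<\tfrac{n}{n-1}$, i.e.\ $p-1<q<\tfrac{n(p-1)}{n-1}$. Now I would discard the $v^r$ term (which is non-negative since $v>0$) to obtain
\begin{equation*}
\Delta_p v + |\nabla v|^q \le 0 \qquad\text{on }M.
\end{equation*}
This fits the hypothesis of \lemref{l42} with the parameters specialised to $r=0$, $q=q$, $b=1$: the requirements $q+r=q>p-1$ and
\begin{equation*}
r+\tfrac{n-1}{n-p}q=\tfrac{n-1}{n-p}q < \tfrac{n(p-1)}{n-p}
\end{equation*}
are equivalent respectively to $q>p-1$ and $q<\tfrac{n(p-1)}{n-1}$, which is exactly our assumed range. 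Therefore \lemref{l42} rules out any entire positive solution.

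Since the two cases exhaust the hypothesis and each yields a contradiction via an already-established non-existence lemma, the proposition follows. The only real content is the reduction by discarding the appropriate non-negative term; the deeper analytic work (integral identities with carefully chosen test functions $\varphi=\gamma^\ell$ on geodesic balls, together with volume comparison under $\mathrm{Ric}_g\ge 0$) has been absorbed into \lemref{l41} and \lemref{l42}. Accordingly, there is no serious obstacle here: the main point is simply to verify that the exponent conditions translate cleanly so that each lemma is directly applicable.
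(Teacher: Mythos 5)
Your Case 1 reduction (discard $|\nabla v|^q\geq 0$, apply \lemref{l41}) is exactly what the paper does and is correct. In Case 2, however, there is a genuine gap: you discard $v^r$ and invoke \lemref{l42} with the exponent $r$ set to $0$, but \lemref{l42} as stated requires $r>0$ (it is one of its explicit hypotheses, which you did not check). Specializing to $r=0$ is therefore an application of the lemma outside its stated range of validity; whether it extends to $r=0$ cannot be read off from the paper, which only cites the lemma without proof.

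The paper avoids this pitfall by \emph{not} discarding $v^r$. Instead, from $\Delta_p v + v^r + |\nabla v|^q \le 0$ and Young's inequality one gets $\Delta_p v + v^{r/s}|\nabla v|^{q/t}\le 0$ for conjugate exponents $s,t$. With $q<\tfrac{n(p-1)}{n-1}$ one can take $s$ large (so $t$ close to $1$) to ensure $\tfrac{r}{s}+\tfrac{n-1}{n-p}\cdot\tfrac{q}{t}<\tfrac{n(p-1)}{n-p}$, while keeping $\tfrac{r}{s}>0$ and $\tfrac{r}{s}+\tfrac{q}{t}>p-1$. This stays strictly within the hypotheses of \lemref{l42}. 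To repair your argument you should either adopt this Young-inequality reduction or establish separately that the He--Hu--Wang nonexistence lemma continues to hold when the $v$-exponent vanishes.
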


\begin{proof}
If $p>1$ and $r$ satisfies
$$p-1< r < \frac{n(p-1)}{n-p},$$
for any positive solution $u$ to the above problem we have
			$$
			\Delta_p v + v^r\leq 0.
			$$
\lemref{l41} implies that $u\equiv0$, which contradict with the condition $u>0$. Now if
$$p-1< q<\frac{n(p-1)}{n-1},$$
then for any $r>p-1$, Young inequality implies
$$0\geq\Delta_p v +v^r+|\nabla v|^q\geq \Delta_p v +v^{\frac{1}{s} r}|\nabla v|^{\frac{1}{t} q},$$
where $s$ and $t$ are positive constants satisfying $1/s+1/t=1$. Since $q<\frac{n(p-1)}{n-1} $, we can always choose $s$ large enough such that
			$$
			\frac{1}{s}r+\frac{1}{t}\frac{n-1}{n-p}q<\frac{n(p-1)}{n-p}.
			$$
Hence, \lemref{l42} implies that $u\equiv 0$.
\end{proof}

Figure \ref{fig:test1} shows different regions of $(\frac{q}{p-1}, \frac{r}{p-1})$ in \corref{c14} and \propref{p5}.
\begin{figure}[h]
		\centering
		\begin{tikzpicture}
			\draw[help lines, color=red!5, dashed] (-0.5,-0.5) grid (4,4);
			\path[fill=green!30](-0.5, 3)--(4,3)--(4,-0.5)--(-0.5, -0.5);
			\path[fill=green!30](-0.5, 2)--(3,2)--(3,4)--(-0.5, 4);
			\fill[fill=green!30](3,3) circle (1);
			\path[pattern=north west lines](-0.5, 2)--(-0.5, 5)--(2, 5)--(2, 6)--(2.5,6)--(2.5, 5)--(7,5)--(7,2)--(2.5,2)--(2.5, -0.5)--(2, -0.5)--(2,2);
			\draw[dotted] (4 ,-0.5) --(4, 1)  node[anchor=north]{$\frac{q}{p-1}= \frac{n+3}{n-1}$} -- (4 ,4) -- (4 ,5.2);
			\draw[dotted] (-1, 4 )--(1, 4) node[anchor=east]{$\frac{r}{p-1}= \frac{n+3}{n-1}$}-- (4, 4 )  -- (6, 4);
			\filldraw[black] (3,3) circle (1pt) node[anchor=south]{$(\frac{n+1}{n-1}, \frac{n+1}{n-1})$};
			\filldraw[black] (2, 2) circle (0.5pt) node[anchor=north]{$(1, 1)$};
			\filldraw[black] (2.5, 5) circle (0.5pt) node[anchor=south west]{$(\frac{n}{n-1}, \frac{n}{n-p})$};
			\draw[->,ultra thick] (-0.5,0)--(7,0) node[right]{$\frac{q}{p-1}$};
			\draw[->,ultra thick] (0,-0.5)--(0,6) node[above]{$\frac{r}{p-1}$};
		\end{tikzpicture}	
\captionof{figure}{The region filled with green color represents the $(\frac{q}{p-1}, \frac{r}{p-1})$ in \corref{c14}; the region filled with north west lines represents the $(\frac{q}{p-1}, \frac{r}{p-1})$ in \propref{p5}(shadow).}
		\label{fig:test1}
\end{figure}
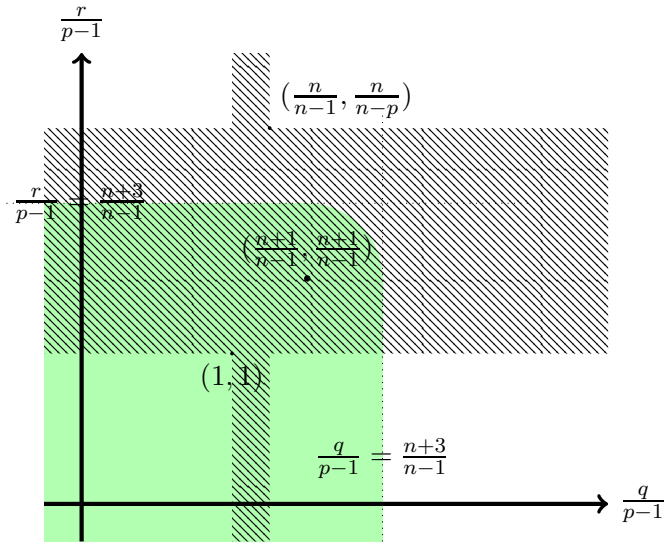
\medskip

\thmref{t6} is the combination of \corref{c14} and \propref{p5}. Now we give the proof of \thmref{t6}.
\begin{proof}
According to the above \corref{c14} and \propref{p5}, for given $p>1$ we first fix several key interval ends on $\frac{q}{p-1}$, i.e.
		$$
		\frac{p}{q-1}=1,\quad\frac{n}{n-1}, \quad \frac{n+1}{n-1},\quad \frac{n+3}{n-1}.
		$$
Then, by the ranges of $\frac{q}{p-1}$ we can determine the ranges of $\frac{r}{p-1}$. Indeed, it is not difficult to see that
\begin{enumerate}
\item if $q$ satisfies $\frac{q}{p-1}\leq 1$, \corref{c14} and \propref{p5} tell us that the corresponding ranges of $r$ are $\frac{r}{p-1}<\frac{n+3}{n-1}$ and $1<\frac{r}{p-1}<\frac{n}{n-p}$ respectively, hence we obtain
		$$\frac{r}{p-1}<\max\left\{\frac{n+3}{n-1},\, \frac{n}{n-p}\right\}.$$
		
\item if $q$ satisfies $$\frac{n}{n-1}>\frac{q}{p-1}>1,$$
by \propref{p5} we know that there is no restriction on $r$.
		
\item if $q$ satisfies $$\frac{n+1}{n-1}\geq\frac{q}{p-1}\geq\frac{n}{n-1},$$
by \corref{c14} and \propref{p5} we know that the corresponding ranges of $r$ are
$$\frac{r}{p-1}<\frac{n+3}{n-1}\quad \mbox{and}\quad 1<\frac{r}{p-1}<\frac{n}{n-p}$$ respectively, hence it follows
$$\frac{r}{p-1}<\max\left\{\frac{n+3}{n-1},\, \frac{n}{n-p}\right\}.$$
		
\item if $q$ satisfies $$\frac{n+3}{n-1}>\frac{q}{p-1}>\frac{n+1}{n-1},$$
by \corref{c14} and \propref{p5} we know that the corresponding ranges of $r$ are $$\frac{r}{p-1}<\frac{n+1}{n-1}+\sqrt{\frac{4}{(n-1)^2}-\left(\frac{n+1}{n-1}-\frac{q}{p-1}\right)^2}\quad \mbox{and} \quad 1<\frac{r}{p-1}<\frac{n}{n-p}$$ respectively,
hence it follows
$$ \frac{r}{p-1}<\max\left\{\frac{n+1}{n-1}+\sqrt{\frac{4}{(n-1)^2}-\left(\frac{n+1}{n-1}-\frac{q}{p-1}\right)^2},\,\, \frac{n}{n-p}\right\}.$$
		
\item if $q$ satisfies $$\frac{q}{p-1}\geq \frac{n+3}{n-1},$$
by \propref{p5} we know the corresponding range of $r$ is
		$$
		1<\frac{r}{p-1}<\frac{n}{n-p}.
		$$
\end{enumerate}
Thus, we complete the proof of the theorem.
\end{proof}

\section{Global gradient estimate}
\thmref{t1} tells us that, if $v$ is an entire solution to \eqref{equ0} on a non-compact complete Riemannian manifold $(M, g)$ with $\mathrm{Ric}_g\geq -(n-1)\kappa$, then there holds true $$\frac{|\nabla v|}{v}\leq C(n,p,q, r)\sqrt{\kappa}.$$
Motivated by Sung and Wang \cite{MR3275651}, we will adopt a similar method with that in \cite{MR3275651} to give the upper bounds of the constants $C(n,p,q,r)$ in different regions $W_i$ ($i=1,\,2,\, 3,\, 4$) in this section.

\subsection{Proof of \thmref{thm1.10}}\

First, we need to establish some auxiliary lemmas.
\begin{lem}\label{lem41}
Let $(M,g)$ be an $n$-dim($n\geq 2$) complete Riemannian manifold satisfying $\mathrm{Ric}_g\geq-(n-1)\kappa g$ for some constant $\kappa\geq0$. Assume $v$ be an entire positive solution of equation \eqref{equ0}. Let $u=-(p-1)\ln v$ and $f = |\nabla u|^2$.
\begin{enumerate}

\item In the case $(b, c, p, q)\in W_1$, we denote $y_1 = (n-1)^2\kappa$ and $\omega = (f-y_1-\delta)^+$, then, on $\Omega_1 = \{f\geq y_1+\delta\}$
there holds true	
\begin{align*}
\mL(\omega^\alpha)\geq 	2\alpha\omega^{\alpha-1}  \left(b_{1}\omega-b_{2}|\nabla \omega|\right), \quad\forall \alpha >2
\end{align*}
where $b_{1}$ and $b_{2}$ are two positive constants.

\item In the case $(b, c, p, q)\in W_2$, we denote
	$$	
	y_2 = \frac{4\kappa}{\left(\frac{n+3}{n-1}-\frac{r}{p-1}\right)\left( \frac{r}{p-1}-1\right)}
	$$
and $\omega = (f-y_2-\delta)^+$. Then, on $\Omega_2 = \{f\geq y_2+\delta\}$ there holds true
\begin{align*}
\mL(\omega^\alpha)\geq 	2\alpha\omega^{\alpha-1}  \left(b_{1}'\omega-b_{2}'|\nabla \omega|\right),
\end{align*}
where $b_{1}'$ and $b_{2}'$ are two positive constants and $\alpha>0$ large enough.
	
\item In the case $(b, c, p, q)\in W_3$, we denote
	$$	
	y_3 = \frac{4\kappa}{\left(\frac{n+3}{n-1}-\frac{q}{p-1}\right)\left( \frac{q}{p-1}-1\right)}
	$$
and $\omega = (f - y_3 -\delta)^+$. Then, on $\Omega_3 = \{f\geq y_3+\delta\}$ there holds true	
\begin{align*}
\mL(\omega^\alpha)\geq 	2\alpha\omega^{\alpha-1}  \left(b_{1}''\omega-b_{2}''|\nabla \omega|\right),
\end{align*}
where $b_{1}''$ and $b_{2}''$ are two positive constants and $\alpha>0$ large enough.

\item In the case $(b, c,p,q)\in W_4$, we denote
	$$	
	y_4 = \frac{\kappa}{\frac{4}{(n-1)^2} - \left(\frac{n+1}{n-1}-\frac{q}{p-1}\right)^2-\left(\frac{n+1}{n-1}-\frac{r}{p-1}\right)^2}
	$$
and $\omega = (f-y_4-\delta)^+$. Then, on $\Omega_4 = \{f\geq y_4+\delta\}$ there holds true	
\begin{align*}
\mL(\omega^\alpha)\geq 	2\alpha\omega^{\alpha-1}  \left(b_{1}'''\omega-b_{2}'''|\nabla \omega|\right),
\end{align*}
where $b_{1}'''$ and $b_{2}'''$ are two positive constants and $\alpha>0$ large enough.
\end{enumerate}
\end{lem}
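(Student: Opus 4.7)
My plan is to run the argument of \lemref{lem2.4} with $\omega^\alpha$ in place of $f^\alpha$. Since $\nabla\omega=\nabla f$ on each $\Omega_i$, the chain rule gives
\[
\mathcal{L}(\omega^\alpha)=\alpha(\alpha-1)\omega^{\alpha-2}\bigl[f^{p/2-1}|\nabla f|^2+(p-2)f^{p/2-2}\langle\nabla f,\nabla u\rangle^2\bigr]+\alpha\omega^{\alpha-1}\mathcal{L}(f),
\]
and the bracketed quantity is bounded below by $a_2 f^{p/2-1}|\nabla f|^2\ge 4a_2 f^{p/2}u_{11}^2$ (with $a_2=\min\{1,p-1\}$, by the same algebraic trick that introduced $a_2$ in \lemref{lem2.4}). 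Hence the first summand supplies an extra $u_{11}^2$-contribution whose coefficient scales like $(\alpha-1)f/\omega$, arbitrarily large for large $\alpha$ or small $\omega$.

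For the second summand I will apply the Bochner-type identity \eqref{bochner1} with $\alpha=1$ and then follow the proof of \lemref{lem2.4} almost verbatim: use $|\nabla\nabla u|^2\ge u_{11}^2+\tfrac{1}{n-1}(\sum_{i\ge 2}u_{ii})^2$, substitute $\sum_{i\ge 2}u_{ii}$ and $\langle\nabla\Delta_p u,\nabla u\rangle$ via \eqref{equf} and its gradient, drop the cross term $2b_1c_1e^{(b_2+c_2)u}f^{2-p+q/2}$ (allowed because $b_1c_1\ge 0$), and absorb the $u_{11}$-cross terms against the enlarged $u_{11}^2$ coefficient through $\mu^2+2\mu\nu\ge-\nu^2$. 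Once $\alpha$ lies above the threshold making that absorption strict on the relevant $W_i$---merely $\alpha>2$ in $W_1$, where the source terms $b_1\bigl(\tfrac{n+1}{n-1}-\tfrac{q}{p-1}\bigr)e^{b_2u}f^{(q-p)/2+2}$ and $c_1\bigl(\tfrac{n+1}{n-1}-\tfrac{r}{p-1}\bigr)e^{c_2u}f^{2-p/2}$ are non-negative and can simply be discarded---multiplication by $2\alpha\omega^{\alpha-1}f^{p/2-2}$ will yield the schematic estimate
\[
\mathcal{L}(\omega^\alpha)\ \ge\ 2\alpha\omega^{\alpha-1}f^{p/2-2}\bigl[B_i f^2-(n-1)\kappa f-\tfrac{a_1}{2}f^{1/2}|\nabla f|\bigr],
\]
with $B_i>0$ equal to the $\alpha\to\infty$ limit of the coefficient of $f^2$ produced in \lemref{lem2.5} on region $W_i$.

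Choosing $y_i=(n-1)\kappa/B_i$ then rewrites $B_if^2-(n-1)\kappa f=B_if(f-y_i)\ge B_i(y_i+\delta)\omega$ on $\Omega_i$, and a direct check using the identity $\bigl(\tfrac{n+3}{n-1}-\tfrac{r}{p-1}\bigr)\bigl(\tfrac{r}{p-1}-1\bigr)=\tfrac{4}{(n-1)^2}-\bigl(\tfrac{n+1}{n-1}-\tfrac{r}{p-1}\bigr)^2$ (and its $q$-analogue) shows that this value of $y_i$ agrees with the one displayed in each of the four cases of the statement. Since $f\ge y_i+\delta$ on $\Omega_i$, the remaining $\Omega_i$-uniform powers of $f$ can be absorbed into positive constants, and $|\nabla f|=|\nabla\omega|$ finally delivers the claimed inequality with constants $b_1,b_2>0$ depending only on $n,p,q,r,\kappa$ and $\delta$.

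The main technical obstacle will be the Cauchy-absorption step in $W_2,W_3,W_4$: I must verify that the total $u_{11}^2$ coefficient---the classical piece obtained from \eqref{bochner1} at $\alpha=1$ plus the chain-rule enhancement $\sim(\alpha-1)f/\omega$---really does dominate the $u_{11}$-cross terms arising from $\langle\nabla\Delta_p u,\nabla u\rangle$ and from the expansion of $(\sum_{i\ge 2}u_{ii})^2$. Because the enhancement is monotone in $\alpha$, this reduces to the positivity of the $\alpha\to\infty$ limits of the constants $B^j_{n,p,q,r,\alpha}$ on $W_j\setminus W_1$, which is precisely what was worked out in cases (2)--(4) of \lemref{lem2.5}; taking $\alpha$ correspondingly large then concludes the argument.
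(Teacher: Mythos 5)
Your proposal is correct in the essentials and reaches the right formulas, but it takes a detour where the paper uses a single slick inequality. After the chain rule
\[
\mathcal{L}(\omega^\alpha)=\alpha(\alpha-1)\omega^{\alpha-2}\langle\nabla f,A(\nabla f)\rangle+\alpha\omega^{\alpha-1}\mathcal{L}(f),
\]
the paper does not keep the first summand as an enhanced $u_{11}^2$-source; instead it simply observes that $\omega\le f$ and $\langle\nabla f,A(\nabla f)\rangle\ge 0$, so $\omega^{\alpha-2}\ge\omega^{\alpha-1}f^{-1}$, which re-assembles the right-hand side into
\[
\mathcal{L}(\omega^\alpha)\ \ge\ \frac{\omega^{\alpha-1}}{f^{\alpha-1}}\,\mathcal{L}(f^\alpha),
\]
valid for $\alpha>2$. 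From there it can quote the already-proved inequalities \eqref{w1}--\eqref{w4} of \lemref{lem2.5} verbatim — no Bochner re-derivation, no Cauchy absorption to redo, no bookkeeping of an $\omega$-dependent $u_{11}^2$ coefficient. Your plan of applying \eqref{bochner1} at $\alpha=1$ and supplementing with the $(\alpha-1)f/\omega$ enhancement is sound, and you correctly identify both the identity $\bigl(\tfrac{n+3}{n-1}-\tfrac{r}{p-1}\bigr)\bigl(\tfrac{r}{p-1}-1\bigr)=\tfrac{4}{(n-1)^2}-\bigl(\tfrac{n+1}{n-1}-\tfrac{r}{p-1}\bigr)^2$ and the formula $y_i=(n-1)\kappa/B_i$ that makes the displayed $y_i$ match; but it is more labour for the same payoff, because once you pass to worst-case $\omega$ (near its supremum) the enhancement only scales like $(\alpha-1)\cdot\text{const}$, which is precisely the order of growth that $(2\alpha-1)(p-1)$ already provides in \lemref{lem2.5}. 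Two small cautions: your claim that ``merely $\alpha>2$'' suffices in $W_1$ is not clearly justified by your argument (the constant $f/\omega$ may be close to $1$ when $\omega$ is near its maximum, so the threshold still depends on $n,p,q,r$; the paper's own statement carries the same ambiguity, since \eqref{w1} also needs $\alpha\ge\alpha_1$), and ``arbitrarily large for small $\omega$'' is not usable since the absorption must hold uniformly on $\Omega_i$ — only the large-$\alpha$ mechanism is available.
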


\begin{proof}
Assume that $(b, c, p, q)\in W_1\cup W_2\cup W_3\cup W_4$. Then, by letting $R\to\infty$ in \eqref{gradient} in \thmref{t1} we obtain
\begin{align*}
	|\nabla u|\leq c(n,p,q,r)\sqrt{\kappa},
\end{align*}
where $c(n,p,q,r)$ may vary in different $W_i$ where $i=1, 2, 3, 4$.

If $(b, c, p, q)\in W_1$, we denote that $y_1 = (n-1)^2\kappa$. Then, for any $\delta>0$ we define $\omega = (f-y_1-\delta)^+$. Hence, on $\{f\geq y_1+\delta\}$ we have
\begin{align*}
\mL(\omega^\alpha) = \di(\alpha\omega^{\alpha-1}A(\nabla f))=\alpha(\alpha-1)\omega^{\alpha-2}\langle\nabla\omega, A(\nabla f)\rangle+\alpha\omega^{\alpha-1}\mL(f).
\end{align*}
In the above expression, it is easy to see that $\nabla\omega=\nabla f$ in $\{f>y_1+\delta\}$, but $\nabla\omega$ causes a distribution on $\{f=y_1+\delta\}$. Now, if we assume $\alpha>2$, then the distribution caused by $\nabla\omega$ on $\{f=y_1+\delta\}$ is eliminated by $\omega$ since $\omega=0$ on $\{f=y_1+\delta\}$. Hence we have
\begin{align*}
\mL(\omega^\alpha) =&\ \alpha(\alpha-1)\omega^{\alpha-2}\langle\nabla f, A(\nabla f)\rangle+\alpha\omega^{\alpha-1}\mL(f)\\
		\geq &\ \omega^{\alpha-1}\left(\alpha(\alpha-1)f^{-1}\langle\nabla f, A(\nabla f)\rangle+\alpha \mL(f)\right)\\
		=&\ \frac{\omega^{\alpha-1}}{f^{\alpha-1}}\mL(f^\alpha).
\end{align*}
It follows from the above inequality and \eqref{w1}
\begin{align*}
\mathcal{L} (f^{\alpha})\geq &\ 2\alpha\omega^{\alpha-1}f^{\frac{p}{2}-1}  \left(\frac{f^2}{n-1}-(n-1)\kappa f-\frac{a_1}{2}f^{\frac{1}{2}}|\nabla \omega|\right)\\
\geq &\ 2\alpha\omega^{\alpha-1}f^{\frac{p-1}{2}}  \left(f^{\frac{1}{2}}\left(\frac{f}{n-1}-(n-1)\kappa \right)-\frac{a_1}{2}|\nabla \omega|\right)\\
\geq& \ 2\alpha\omega^{\alpha-1}f^{\frac{p-1}{2}}  \left(f^{\frac{1}{2}} \frac{\delta}{n-1} -\frac{a_1}{2}|\nabla \omega|\right).
\end{align*}
Using the fact $$y_1+\delta\leq f\leq c(n,p,q,r)\sqrt{\kappa}$$ on $\{f\geq y_1+\delta\}$, we have
\begin{align*}
\mL(f^\alpha)\geq 	2\alpha\omega^{\alpha-1}  \left(b_1\omega-b_2|\nabla \omega|\right), \quad\forall \alpha >2,
\end{align*}
where $b_1$ and $b_2$ are two positive constants.
\medskip
	
If $(b, c, p, q)\in W_2$, by \eqref{w2} we have
\begin{align*}
\frac{f^{2-\alpha-\frac{p}{2}}}{2\alpha}\mathcal{L} (f^{\alpha})\geq &\ B^1_{n, p, q, r,\alpha}f^2-(n-1)\kappa f -\frac{a_1}{2}f^{\frac{1}{2}}|\nabla f|,
\end{align*}
where
	$$
	B^1_{n, p, q, r,\alpha}:=\frac{1}{n-1}-\frac{\left(\frac{n+1}{n-1}-\frac{r}{p-1}\right)^2}{\frac{4}{n-1} -\frac{4d_2}{2\alpha-1} }>0
	$$
and $\alpha$ is large enough such that $B^1_{n, p, q, r,\alpha}>0$.

Denote
\begin{align*}
y_2 = \frac{4\kappa}{\left(\frac{n+3}{n-1}-\frac{r}{p-1}\right)\left( \frac{r}{p-1}-1\right)},
\end{align*}
and let's define $$\omega = (f-y_2-\delta)^+$$
where $\delta>0$ is any positive real number. Then, by taking a computation we have on $\{f\geq y_2+\delta\}$
\begin{align*}
\mL(\omega^\alpha) \geq & \ 2\alpha\omega^{\alpha-1}f^{\frac{p-1}{2}}  \left(f^{\frac{1}{2}}\left(B^1_{n, p, q, r,\alpha} f  -(n-1)\kappa \right)-\frac{a_1}{2}|\nabla \omega|\right)\\
\geq &\ 2\alpha\omega^{\alpha-1}f^{\frac{p-1}{2}}  \left(f^{\frac{1}{2}}\frac{\delta}{2(n-1)}-\frac{a_1}{2}|\nabla \omega|\right)\\
\geq &\	2\alpha\omega^{\alpha-1}  \left(b_1'\omega-b_2'|\nabla \omega|\right).
\end{align*}
\medskip

If $(b, c,p,q)\in W_3$, we denote
	$$
	y_3 = \frac{4\kappa}{\left(\frac{n+3}{n-1}-\frac{q}{p-1}\right)\left( \frac{q}{p-1}-1\right)}
	$$
and define $\omega = (f-y_3-\delta)^+$ where $\delta>0$ is any positive real number. Then, by a similar argument with the second case, on $\{f\geq y_3+\delta\}$ we have
\begin{align*}
\mL(\omega^\alpha) \geq &	2\alpha\omega^{\alpha-1}\left(b_1\omega-b_2|\nabla \omega|\right)
\end{align*}
for large enough $\alpha>0$.
\medskip
	
If $(b, c, p, q)\in W_4$, for $\alpha$ large enough, we have
\begin{align*}
\frac{f^{2-\alpha-\frac{p}{2}}}{2\alpha}\mathcal{L} (f^{\alpha}) \geq &\ B^3_{n, p, q, r,\alpha}f^2-(n-1)\kappa f
-\frac{a_1}{2}f^{\frac{1}{2}}|\nabla f|,
\end{align*}
where
	 $$
	 B^3_{n, p, q, r,\alpha}:=\frac{1}{n-1}-\frac{\left(\frac{n+1}{n-1}-\frac{q}{p-1}\right)^2}{\frac{4}{n-1} -\frac{4d_1}{2\alpha-1} }
	 -\frac{\left(\frac{n+1}{n-1}-\frac{r}{p-1}\right)^2}{\frac{4}{n-1} -\frac{4d_2}{2\alpha-1}}.
	 $$
Now, we denote
\begin{align*}
y_4 = \frac{\kappa}{\frac{4}{(n-1)^2} - \left(\frac{n+1}{n-1}-\frac{q}{p-1}\right)^2-\left(\frac{n+1}{n-1}-\frac{r}{p-1}\right)^2}.
\end{align*}
Let $\omega = (f-y_4-\delta)^+$ for any $\delta>0$. Then, on $\{f\geq y_2+\delta\}$, we have
\begin{align*}
\mL(\omega^\alpha) \geq &\ 2\alpha\omega^{\alpha-1}f^{\frac{p-1}{2}}  \left(f^{\frac{1}{2}}\left(B^3_{n, p, q, r,\alpha} f -(n-1)\kappa \right)-\frac{a_1}{2}|\nabla \omega|\right)\\
\geq &\ 2\alpha\omega^{\alpha-1}f^{\frac{p-1}{2}}  \left(f^{\frac{1}{2}}\frac{\delta}{2(n-1)}-\frac{a_1}{2}|\nabla \omega|\right)\\
\geq &\	2\alpha\omega^{\alpha-1}  \left(b_1'''\omega-b_2'''|\nabla \omega|\right).
\end{align*}
Thus, we complete the proof of this lemma.
\end{proof}

\begin{lem}\label{lem42}
Let $(M,g)$ be an $n$-dim($n\geq 2$) complete Riemannian manifold satisfying $\mathrm{Ric}_g\geq-(n-1)\kappa g$ for some constant $\kappa\geq0$. Let $v\in C^1(M)$ be an entire positive solution of \eqref{equ0}. For some $y>0$, we define $\omega = (f-y)^+$. If $\omega$ satisfies the following inequality
$$	 		
\mL(\omega^\alpha)\geq \omega^{\alpha-1}(b_1\omega-b_2|\nabla\omega|)
$$
where $b_1$, $b_2$ and $\alpha$ are some positive constants, then $\omega\equiv 0$, i.e.,  $f\leq y$.
\end{lem}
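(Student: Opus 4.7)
The plan is to combine the global boundedness of $f$ from \thmref{t1} with an energy/hole-filling argument. First, because $v$ is a global positive solution on a complete manifold with $\mathrm{Ric}_g\geq -(n-1)\kappa g$, \thmref{t1} applied on $B_R(o)$ and taking $R\to\infty$ yields $f=|\nabla u|^2\leq M_0$ everywhere, so $\omega=(f-y)^+$ is bounded by $M_0$. On the open set $\{\omega>0\}=\{f>y\}$ we have $y\leq f\leq M_0$, and the symmetric tensor $A=f^{p/2-1}I+(p-2)f^{p/2-2}\nabla u\otimes\nabla u$ associated with $\mathcal{L}$ has eigenvalues $f^{p/2-1}$ (with multiplicity $n-1$) and $(p-1)f^{p/2-1}$, all lying in $[c_1,c_2]$ for constants $c_1,c_2>0$ depending only on $p$, $y$, and $M_0$. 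Thus $\mathcal{L}$ is uniformly elliptic on the set where the hypothesized inequality is nontrivial.

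Next, I set up a Caccioppoli-type estimate by testing the hypothesized inequality against $\omega^{\beta}\eta^2$ for $\beta>0$ to be chosen and $\eta\in C_c^\infty(M)$. Using $\omega^{\alpha-1}|\nabla\omega|=\alpha^{-1}|\nabla\omega^\alpha|$ and integrating $\int\omega^\beta\eta^2\mathcal{L}(\omega^\alpha)$ by parts, the principal contribution is $\alpha\beta\int\omega^{\alpha+\beta-2}\eta^2\langle\nabla\omega,A\nabla\omega\rangle\geq \alpha\beta c_1\int\omega^{\alpha+\beta-2}|\nabla\omega|^2\eta^2$. The cross terms $b_2\omega^{\alpha+\beta-1}|\nabla\omega|\eta^2$ and $2\alpha c_2\omega^{\alpha+\beta-1}\eta|\nabla\eta||\nabla\omega|$ are absorbed by Young's inequality with threshold $\tfrac14\alpha\beta c_1\omega^{\alpha+\beta-2}|\nabla\omega|^2\eta^2$; choosing $\beta$ large enough that $b_2^2/(\alpha\beta c_1)<b_1/2$ produces
\[
\int_M \omega^{\alpha+\beta}\eta^2 \;\leq\; \frac{C_1\alpha}{\beta}\int_M \omega^{\alpha+\beta}|\nabla\eta|^2,
\]
where $C_1$ depends only on $b_1,b_2,c_1,c_2$.

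I now run a hole-filling iteration. For $R,h>0$ pick $\eta\equiv 1$ on $B_R(o)$, supported in $B_{R+h}(o)$, with $|\nabla\eta|\leq 2/h$, and set $\phi(R):=\int_{B_R(o)}\omega^{\alpha+\beta}$. The estimate yields $\phi(R)\leq (C_2/h^2)(\phi(R+h)-\phi(R))$ with $C_2=4C_1\alpha/\beta$, and rearranging gives $\phi(R)\leq \theta(h)\,\phi(R+h)$ where $\theta(h):=h^2/(C_2+h^2)<1$. Iterating with $R_k=R_0+kh$ we have $\phi(R_0)\leq \theta(h)^N\phi(R_0+Nh)$. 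Combining with the trivial bound $\omega\leq M_0$ and the Bishop--Gromov volume comparison $V(B_r(o))\leq C_\kappa e^{(n-1)\sqrt{\kappa}\,r}$ gives
\[
\phi(R_0)\;\leq\; C_\kappa M_0^{\alpha+\beta}\,e^{(n-1)\sqrt{\kappa}R_0}\bigl(\theta(h)\,e^{(n-1)\sqrt{\kappa}h}\bigr)^N.
\]

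Since $\theta(h)=O(h^2)$ as $h\to 0$ while $e^{(n-1)\sqrt\kappa h}\to 1$, one can pick $h$ small enough that $\theta(h)e^{(n-1)\sqrt\kappa h}<1$; letting $N\to\infty$ forces $\phi(R_0)=0$ for every $R_0>0$, and hence $\omega\equiv 0$ on $M$. The main obstacle is calibrating the two parameters $\beta$ (needed large to absorb the lower-order $|\nabla\omega|$ term) and $h$ (needed small to overcome exponential volume growth) so that the iteration converges when $\kappa>0$; the quadratic smallness of $\theta(h)$ near $h=0$ is precisely what beats the exponential volume factor. When $\kappa=0$ the Bishop--Gromov bound is polynomial and the choice of $h$ is essentially free.
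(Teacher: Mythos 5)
Your overall strategy is the same as the paper's: test the differential inequality against $\omega^{\beta}\eta^{2}$, integrate by parts using the uniform ellipticity of $\mathcal{L}$ on $\{y\leq f\leq M_{0}\}$, absorb the lower-order term by Young's inequality (which forces $\beta$ large), obtain a Caccioppoli estimate whose cut-off constant scales like $\alpha/\beta$, and iterate over expanding geodesic balls against the Bishop--Gromov volume bound. The paper iterates $B_{k}\to B_{k+1}$ with a fixed unit step and makes the ratio $\epsilon(p+1)/b_{1}$ small by taking $\gamma$ (your $\beta$) large; you instead fill the hole, which is a legitimate refinement but not needed here.

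However, there is a genuine algebraic slip in your hole-filling step that propagates into a wrong calibration strategy. From $\phi(R)\leq(C_{2}/h^{2})\bigl(\phi(R+h)-\phi(R)\bigr)$ one gets $\phi(R)\bigl(1+C_{2}/h^{2}\bigr)\leq(C_{2}/h^{2})\phi(R+h)$, hence
$$
\phi(R)\;\leq\;\frac{C_{2}}{C_{2}+h^{2}}\,\phi(R+h),
$$
so the correct ratio is $\theta(h)=\dfrac{C_{2}}{C_{2}+h^{2}}$, not $\dfrac{h^{2}}{C_{2}+h^{2}}$. In particular $\theta(h)\to 1$ as $h\to 0$, so the claim ``$\theta(h)=O(h^{2})$ near $h=0$'' and the consequent advice to take $h$ small are false: for small $h$ and $\kappa>0$ the product $\theta(h)e^{(n-1)\sqrt{\kappa}h}$ tends to $1$ from above, and the iteration does not close. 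The fix is to keep $h$ fixed (say $h=1$, exactly as the paper does with its unit balls) and exploit the parameter you already identified: $C_{2}=4C_{1}\alpha/\beta\to 0$ as $\beta\to\infty$, so $\theta(1)=C_{2}/(C_{2}+1)$ can be made smaller than $e^{-(n-1)\sqrt{\kappa}}$ by choosing $\beta$ large. With that correction the rest of your argument (the $\theta(h)^{N}$ versus $e^{(n-1)\sqrt{\kappa}Nh}$ comparison, $N\to\infty$, the $\kappa=0$ case being trivial) goes through and matches the paper's proof in substance.
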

	
\begin{proof}
Let $\eta\in C^{\infty}_0(M,\bR)$ be a cut-off function to be determine later. We choose $\eta^2\omega^\gamma$ as test function, then we have
\begin{align*}
\int_M\mL(\omega^\alpha)\omega^{\gamma}\eta^2\geq \int_M2\alpha\omega^{\alpha+\gamma-1}  \left(b_1\omega-b_2|\nabla \omega|\right)\eta^2.
\end{align*}
	 	
We omit the term $f^{\frac{p}{2}-1}$ since $f$ is uniform bounded on $M$. By integration by parts, we obtain
	 	\begin{align*}
	 	&\int_M2\alpha\omega^{\alpha+\gamma-1}  \left(b_1\omega-b_2|\nabla \omega|\right)\eta^2\\
	 	\leq& -\int_M\alpha\gamma\omega^{\alpha+\gamma-2}\eta^2(|\nabla\omega|^2+(p-2)f^{-1}\langle\nabla\omega, \nabla u\rangle^2)\\
	 	&-\int_M2\alpha\eta\omega^{\alpha+\gamma-1}\left(\langle\nabla\omega, \nabla \eta \rangle+(p-2)f^{-1}\langle\nabla\omega,\nabla\eta\rangle\langle \nabla u,\nabla\eta\rangle\right).
	 	\end{align*}
It follows
\begin{align*}
&\int_M2 \omega^{\alpha+\gamma-1}  \left(b_1\omega-b_2|\nabla \omega|\right)\eta^2+a_1\int_M \gamma\omega^{\alpha+\gamma-2}\eta^2|\nabla\omega|^2\\
\leq &2(p+1)\int_M\omega^{\alpha+\gamma-1}|\nabla\omega||\nabla \eta|  \eta.
\end{align*}
Cauchy inequality implies
\begin{align*}
2b_2 \omega^{\alpha+\gamma-1}  |\nabla \omega| \eta^2\leq&\  b_2\eta^2\omega^{\alpha+\gamma-2}\left(\frac{|\nabla\omega|^2}{\epsilon}+\epsilon\omega^2\right) ;\\
2 \eta(p+1) \omega^{\alpha+\gamma-1}|\nabla\omega||\nabla \eta|
\leq& \ (p+1)  \omega^{\alpha+\gamma-2}\left(\frac{\eta^2|\nabla\omega|^2}{\epsilon}+\epsilon|\nabla\eta|^2\omega^2\right).
\end{align*}
If we choose $\epsilon$ such that
	 	\begin{align*}
	 	\frac{b_2+p+1}{\epsilon}=a_1\gamma,
	 	\end{align*}
then we have
	 	\begin{align*}
	 		&\int_M2b_1 \omega^{\alpha+\gamma}   \eta^2
	 	\leq
	 	\int_M  \epsilon(p+1)  \omega^{\alpha+\gamma }|\nabla\eta|^2 +\int_M\epsilon b_2 \omega^{\alpha+\gamma } \eta^2 .
	 	\end{align*}
Picking $\gamma$ large enough such that $\epsilon b_2<b_1$, we obtain
\begin{align*}
&b_1 \int_M \omega^{\alpha+\gamma}\eta^2 \leq \int_M \epsilon(p+1)\omega^{\alpha+\gamma }|\nabla\eta|^2.
\end{align*}
Now, if $\omega\neq 0$, without loss of generality we may assume that $\omega|_{B_1}\neq 0$ for some geodesic ball $B_1$ with radius 1, it follows that there always holds true $$\int_{B_1}\omega^{\alpha+\gamma}>0.$$
By choosing $\eta\in C^{\infty}_0(B_{k+1})$, $\eta\equiv 1$ in $B_k$ and $|\nabla\eta|<4$, then from the above argument we have
\begin{align*}
b_1\int_{B_k}\omega^{\alpha+\gamma}\leq 4\epsilon(p+1) \int_{B_{k+1}}\omega^{\alpha+\gamma}.
\end{align*}
By iteration on $k$, we have
\begin{align*}
\int_{B_k}\omega^{\alpha+\gamma}\geq \left(\frac{C}{\epsilon}\right)^k\int_{B_1}\omega^{\alpha+\gamma}.
\end{align*}
Since $\omega$ is uniformly bounded and $\vol(B_{k})\leq e^{(n-1)k\sqrt{\kappa}}$ by volume comparison theorem, we have
	 	\begin{align*}
	 		C_1^{\alpha+\gamma}e^{(n-1)k\sqrt{\kappa}}\geq e^{k\ln\frac{C}{\epsilon}}.
	 	\end{align*}
We choose $\gamma$ such that $$\ln\frac{C}{\epsilon}>2(n-1)\sqrt{\kappa}+2$$ and then choose $k$ large enough, immediately we get a contradiction. This means $\omega\equiv 0$ and we finish the proof of this lemma.
\end{proof}

\begin{proof}[Proof of \thmref{thm1.10}]
	\thmref{thm1.10} follows immediately from \lemref{lem41} and \lemref{lem42}.
\end{proof}

Note $y_4<y_2<y_1$, $y_4<y_3<y_1$ and $W_i\cap W_j\neq\emptyset$, $\forall 1\leq i,\,j\leq 4$. So we can obtain finer bounds for the intersection parts by choosing the smaller relatively upper bounds. Hence, we can get the following more obvious corollaries.

\begin{cor}
Let $(M,g)$ be an $n$-dim($n\geq 2$) complete Riemannian manifold satisfying $\mathrm{Ric}_g\geq-(n-1)\kappa g$ for some constant $\kappa\geq0$. Let $v\in C^1(M)$ be an entire positive solution of \eqref{equ0} with $b>0$ and $c>0$.
\begin{enumerate}
\item If $p>1$, $q$ and $r$ fulfill the following
$$\frac{q}{p-1}<\frac{n+1}{n-1}\quad\mbox{and}\quad \frac{r}{p-1}<\frac{n+1}{n-1},$$
then we have
				$$
				\frac{|\nabla v|}{v}\leq \frac{(n-1)\sqrt{\kappa}}{p-1}.
				$$
\item If $p>1$, $q$ and $r$ fulfill that
 $$\frac{q}{p-1}<\frac{n+1}{n-1}\quad\mbox{and}\quad \frac{n+1}{n-1}\leq \frac{r}{p-1}<\frac{n+3}{n-1},$$
then we have
$$
\frac{|\nabla v|}{v}\leq \frac{2\sqrt{\kappa}}{(p-1)\sqrt{\left(\frac{n+3}{n-1}-\frac{r}{p-1}\right)\left(\frac{r}{p-1}-1\right)}}.
$$
\item If $p>1$, $q$ and $r$ fulfill that $$\frac{r}{p-1}<\frac{n+1}{n-1}\quad\mbox{and}\quad \frac{n+1}{n-1}\leq \frac{q}{p-1}<\frac{n+3}{n-1},$$
then we have
$$
\frac{|\nabla v|}{v}\leq \frac{2\sqrt{\kappa}}{(p-1)\sqrt{\left(\frac{n+3}{n-1}-\frac{q}{p-1}\right)\left(\frac{q}{p-1}-1\right)}}.
$$
\item If $p>1$, $q$ and $r$ fulfill the following
$$\left( \frac{n+1}{n-1}-\frac{q}{p-1}\right)^2 +\left(\frac{n+1}{n-1}-\frac{r}{p-1}\right)^2 <\frac{4}{(n-1)^2},\quad \frac{q}{p-1}\geq\frac{n+1}{n-1}$$
and $$\frac{r}{p-1}\geq\frac{n+1}{n-1},$$
then we have
$$
\frac{|\nabla v|}{v}\leq \frac{2\sqrt{\kappa}}{(p-1)\sqrt{\frac{4}{(n-1)^2}-\left( \frac{n+1}{n-1}-\frac{q}{p-1}\right)^2 -\left(\frac{n+1}{n-1}-\frac{r}{p-1}\right)^2}}.$$
\end{enumerate}
Figure \ref{fig5} shows the above four regions.
\end{cor}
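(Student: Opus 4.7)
The plan is to reduce each of the four cases to the corresponding item of \thmref{thm1.10} by verifying that $(b,c,q,r)$ lies in the appropriate region $W_i$. The hypotheses $b>0$ and $c>0$ are exactly what is needed to turn the sign-coupled conditions defining $W_1,W_2,W_3$ into pure inequalities in $(q,r)$.

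First, for Case (1) I would use $b,c>0$ together with $\frac{q}{p-1}<\frac{n+1}{n-1}$ and $\frac{r}{p-1}<\frac{n+1}{n-1}$ to obtain $b\bigl(\frac{n+1}{n-1}-\frac{q}{p-1}\bigr)>0$ and $c\bigl(\frac{n+1}{n-1}-\frac{r}{p-1}\bigr)>0$, placing $(b,c,q,r)\in W_1$; \thmref{thm1.10}(1) then yields $\frac{|\nabla v|}{v}\leq \frac{(n-1)\sqrt{\kappa}}{p-1}$. For Case (2), the condition $\frac{n+1}{n-1}\leq\frac{r}{p-1}<\frac{n+3}{n-1}$ is equivalent to $\bigl|\frac{n+1}{n-1}-\frac{r}{p-1}\bigr|<\frac{2}{n-1}$, which together with $b>0$ and $\frac{q}{p-1}<\frac{n+1}{n-1}$ places the quadruple in $W_2$, so \thmref{thm1.10}(2) delivers the stated bound. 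Case (3) follows from Case (2) by the symmetric role of $q$ and $r$, landing in $W_3$ and invoking \thmref{thm1.10}(3). Case (4) is the direct translation of the definition of $W_4$, with the additional constraints $\frac{q}{p-1},\frac{r}{p-1}\geq \frac{n+1}{n-1}$ serving only to separate this case from the previous three, and \thmref{thm1.10}(4) gives the final bound.

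The main (but mild) obstacle is checking that in each case the displayed constant is actually the optimal one supplied by \thmref{thm1.10}: a single quadruple can belong to several $W_i$ simultaneously, and the remark immediately preceding the corollary records $y_4<y_2<y_1$ and $y_4<y_3<y_1$, so in principle a smaller $y_i$ could apply and yield a sharper bound. I would therefore verify, by direct comparison of the four explicit expressions $y_1,y_2,y_3,y_4$ on the subregion described in each item, that the $W_i$ invoked there is indeed the one producing the tightest constant. This amounts to elementary algebra, after which the corollary follows with no further analytic input.
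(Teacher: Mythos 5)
Your reduction of each case to the corresponding item of \thmref{thm1.10} by checking membership in $W_i$ is exactly what the paper intends, and it is a complete proof: once $(b,c,q,r)\in W_i$ is verified under $b>0$, $c>0$, the stated bound is the conclusion of \thmref{thm1.10}. The corollary claims only that each displayed quantity is a valid upper bound for $|\nabla v|/v$, not that it is the smallest of the four possibilities, so your proposed optimality check in the last paragraph is unnecessary.

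That check would also not succeed if carried out. Setting $A=\frac{n+1}{n-1}$, $\sigma=\frac{q}{p-1}$, $\rho=\frac{r}{p-1}$, one computes $\left(\frac{n+3}{n-1}-\rho\right)(\rho-1)=\frac{4}{(n-1)^2}-(\rho-A)^2\leq\frac{4}{(n-1)^2}$, hence $y_2\geq(n-1)^2\kappa=y_1$ and likewise $y_3\geq y_1$; the remark's chain ``$y_4<y_2<y_1$'' is therefore a misprint for $y_1\leq y_2$, $y_1\leq y_3$. Furthermore, whenever $(\sigma-A)^2+(\rho-A)^2<\frac{3}{(n-1)^2}$ one has $y_4<y_1$; such $(\sigma,\rho)$ with $\sigma<A$, $\rho<A$ lie in the Case (1) region and in $W_4$ simultaneously, so there the $W_4$ bound is strictly sharper than the $y_1$ recorded in Case (1). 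Thus the assertion that the invoked $W_i$ always produces the tightest constant is false, and an attempt to prove it would break down. The corollary is nonetheless correct as stated, since $y_1$ is still a valid upper bound wherever Case (1) holds; the paragraph on optimality should simply be dropped. (A small wording issue: in Case (2) the hypothesis $\frac{n+1}{n-1}\leq\frac{r}{p-1}<\frac{n+3}{n-1}$ is a proper subset of, not equivalent to, $\left|\frac{n+1}{n-1}-\frac{r}{p-1}\right|<\frac{2}{n-1}$; only the containment is needed.)
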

	
\begin{figure}
\begin{tikzpicture}
			\fill[gray](3,3) circle (1);
			\draw[help lines, color=red!5, dashed] (-0.5,-0.5) grid (4,4);
			\path[fill=red](-0.5, 3)--(3,3)--(3,-0.5)--(-0.5, -0.5);
			\path[fill=green](-0.5, 4)--(3,4)--(3,3)--(-0.5, 3);
			\path[fill=yellow](3, -0.5)--(3,3)--(4,3)--(4, -0.5);
			\draw[dotted] (4 ,-0.5) --(4, 2)  node[anchor=north]{$\frac{q}{p-1}= \frac{n+3}{n-1}$} -- (4 ,4) -- (4 ,4.5);
			\draw[dotted] (3,-0.5) --(3, 1)  node[anchor=north]{$\frac{q}{p-1}= \frac{n+1}{n-1}$} -- (3,4) -- (3,4.5);
			\draw[dotted] (-0.5, 4 )--(2, 4) node[anchor=east]{$\frac{r}{p-1}=\frac{n+3}{n-1}$}-- (4, 4 )  -- (5, 4);
			\draw[dotted] (-0.5,3) --(1,3)  node[anchor=north]{$\frac{r}{p-1}= \frac{n+1}{n-1}$} -- (4,3) -- (4.5,3);
			\draw[->,ultra thick] (-0.5,0)--(5,0) node[right]{$\frac{q}{p-1}$};
			\draw[->,ultra thick] (0,-0.5)--(0,5) node[above]{$\frac{r}{p-1}$};
\end{tikzpicture}			
\begin{tikzpicture}
			\path[fill=green!0](2, 2)--(3,2)--(3,2.5)--(2,2.5);
			\path[fill=red](2, 3)--(3,3)--(3,3.2) node[anchor=west]{$f=|\nabla u|^2<y_1$}--(3,3.5)--(2,3.5);				
			\path[fill=green](2, 4)--(3,4)--(3,4.2) node[anchor=west]{$f=|\nabla u|^2<y_2$}--(3,4.5)--(2,4.5);
			\path[fill=yellow](2, 5)--(3,5)--(3,5.2) node[anchor=west]{$f=|\nabla u|^2<y_3$}--(3,5.5)--(2,5.5);
			\path[fill = gray](2, 6)--(3,6)--(3,6.2) node[anchor=west]{$f=|\nabla u|^2<y_4$}--(3,6.5)--(2,6.5);
\end{tikzpicture}
\caption{When $b>0, c>0$, the region of $(\frac{q}{p-1}, \frac{r}{p-1})$ in $W_1, W_2, W_3, W_4$.}
\label{fig5}
\end{figure}

\begin{cor}
Let $(M,g)$ be an $n$-dim($n\geq 2$) complete Riemannian manifold satisfying $\mathrm{Ric}_g\geq-(n-1)\kappa g$ for some constant $\kappa\geq0$. Let $u\in C^1(M)$ be a global positive solution of \eqref{equ0} with $b<0$ and $c<0$.
	
\begin{enumerate}
\item If $p>1$, $q$ and $r$ fulfill the following
	$$\frac{q}{p-1}>\frac{n+1}{n-1}\quad\mbox{and}\quad \frac{r}{p-1}>\frac{n+1}{n-1},$$
	then we have
	$$
	\frac{|\nabla v|}{v}\leq \frac{(n-1)\sqrt{\kappa}}{p-1}.
	$$
\item If $p>1$, $q$ and $r$ fulfill that
	$$\frac{q}{p-1}>\frac{n+1}{n-1}\quad\mbox{and}\quad 1< \frac{r}{p-1}\leq\frac{n+1}{n-1},$$
	then we have
	$$
	\frac{|\nabla v|}{v}\leq \frac{2\sqrt{\kappa}}{(p-1)\sqrt{\left(\frac{n+3}{n-1}-\frac{r}{p-1}\right)\left(\frac{r}{p-1}-1\right)}}.
	$$
\item If $p>1$, $q$ and $r$ fulfill that $$\frac{r}{p-1}>\frac{n+1}{n-1}\quad\mbox{and}\quad 1< \frac{q}{p-1}\leq\frac{n+1}{n-1},$$
	then we have
	$$
	\frac{|\nabla v|}{v}\leq \frac{2\sqrt{\kappa}}{(p-1)\sqrt{\left(\frac{n+3}{n-1}-\frac{q}{p-1}\right)\left(\frac{q}{p-1}-1\right)}}.
	$$
\item If $p>1$, $q$ and $r$ fulfill the following
	$$\left( \frac{n+1}{n-1}-\frac{q}{p-1}\right)^2 +\left(\frac{n+1}{n-1}-\frac{r}{p-1}\right)^2 <\frac{4}{(n-1)^2},\quad \frac{q}{p-1}\leq\frac{n+1}{n-1}$$
	and $$\frac{r}{p-1}\leq\frac{n+1}{n-1},$$
	then we have
	$$
	\frac{|\nabla v|}{v}\leq \frac{2\sqrt{\kappa}}{(p-1)\sqrt{\frac{4}{(n-1)^2}-\left( \frac{n+1}{n-1}-\frac{q}{p-1}\right)^2 -\left(\frac{n+1}{n-1}-\frac{r}{p-1}\right)^2}}.$$
	\end{enumerate}
Figure \ref{fig6} shows the region in the above four regions.
\end{cor}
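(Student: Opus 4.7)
The plan is to deduce this corollary as a direct specialization of \thmref{thm1.10}. Since $b<0$ and $c<0$ imply $bc>0\geq 0$, the standing hypothesis $bc\geq 0$ of \thmref{thm1.10} is automatic, so it suffices to match each of the four cases listed here with one of the regions $W_1,\,W_2,\,W_3,\,W_4$ and then invoke the corresponding item of \thmref{thm1.10}.

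The key translation is that when $b<0$, the inequality $b\bigl(\frac{n+1}{n-1}-\frac{q}{p-1}\bigr)\geq 0$ reverses into $\frac{q}{p-1}\geq\frac{n+1}{n-1}$, and analogously the $c$-$r$ inequality defining $W_1$ becomes $\frac{r}{p-1}\geq\frac{n+1}{n-1}$. This identification puts Case~(1) of the corollary exactly into $W_1$, and item~(1) of \thmref{thm1.10} then delivers the bound $(n-1)\sqrt{\kappa}/(p-1)$. Next, I would rewrite the absolute-value condition $\bigl|\frac{n+1}{n-1}-\frac{r}{p-1}\bigr|<\frac{2}{n-1}$ as the open interval $1<\frac{r}{p-1}<\frac{n+3}{n-1}$; combined with the sign reversal for the $b$-$q$ inequality, the defining data of $W_2$ under $b,c<0$ becomes $\{\frac{q}{p-1}\geq\frac{n+1}{n-1}\}\cap\{1<\frac{r}{p-1}<\frac{n+3}{n-1}\}$, which covers Case~(2) after excluding the $W_1$ overlap. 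By the symmetric role of $q$ and $r$, the analogous identification of $W_3$ handles Case~(3), and Case~(4) is verbatim $W_4$.

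Since the four regions $W_i$ are not disjoint, whenever a point $(b,c,q,r)$ lies in several of them one selects the smallest among the four corresponding upper bounds in \thmref{thm1.10}; this convention is exactly what partitions the four cases of the corollary, each picking out the sharpest available bound on its portion of the $(q,r)$-plane (and in particular explains why Case~(2) restricts to $1<\frac{r}{p-1}\leq \frac{n+1}{n-1}$, Case~(3) to $1<\frac{q}{p-1}\leq \frac{n+1}{n-1}$, and Case~(4) to the quadrant where both $q$ and $r$ do not exceed the critical value). No genuinely new analytic input is needed: the heart of the argument already lies in \lemref{lem41} and \lemref{lem42}, and the only real task here is careful bookkeeping of inequality signs. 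Accordingly there is no substantive obstacle; the main care is simply verifying that the boundary conventions ``$\leq$'' versus ``$<$'' chosen in the statement are consistent with the strict/non-strict boundaries of the corresponding $W_i$, and that the excluded point $q/(p-1)=1$ or $r/(p-1)=1$ in Cases~(2)--(3) matches the strict inequality built into the definition of $W_2$ and $W_3$.
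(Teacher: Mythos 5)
Your plan is correct and is essentially the only possible proof: the paper itself gives no separate argument for this corollary, treating it as an immediate specialization of \thmref{t1} / \thmref{thm1.10} once the sign reversals implicit in $b<0$, $c<0$ are carried through the defining inequalities of $W_1,\dots,W_4$. You have done that translation correctly: with $b<0$ the constraint $b\bigl(\frac{n+1}{n-1}-\frac{q}{p-1}\bigr)\geq 0$ becomes $\frac{q}{p-1}\geq\frac{n+1}{n-1}$, the absolute-value condition becomes $1<\frac{r}{p-1}<\frac{n+3}{n-1}$, and each of the four cases is plainly contained in the corresponding $W_i$, so the stated bound is exactly item $i$ of \thmref{thm1.10}.

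One small inaccuracy worth flagging, though it does not affect validity: the framing that the four cases ``pick the sharpest available bound'' is not literally correct. For instance $y_2 = \frac{4\kappa}{(\frac{n+3}{n-1}-\frac{r}{p-1})(\frac{r}{p-1}-1)} = \frac{4\kappa}{\frac{4}{(n-1)^2}-B^2}$ with $B=\frac{n+1}{n-1}-\frac{r}{p-1}$, and this is minimized (equal to $y_1=(n-1)^2\kappa$) precisely at $B=0$, so $y_2\geq y_1$ wherever both apply; similarly $y_4$ may be smaller than $y_1$ strictly inside Case~(1). So the corollary's partition is just the natural decomposition of the $(q,r)$-quadrant into the four sub-quadrants around the point $\bigl(\frac{n+1}{n-1},\frac{n+1}{n-1}\bigr)$, each placed into one $W_i$, not a pointwise minimum over all applicable bounds. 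The paper's preamble (``$y_4<y_2<y_1$'') is itself imprecise on this score, so you inherit that; the argument as a specialization of \thmref{thm1.10} remains sound regardless.
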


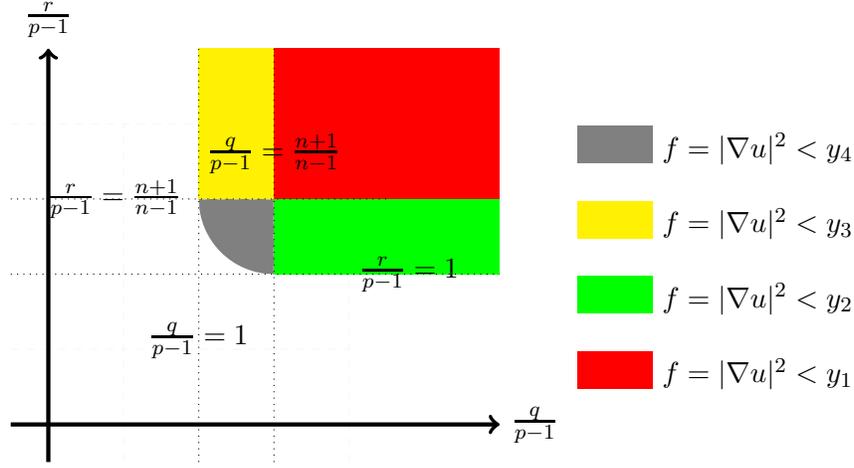
\begin{figure}
	\begin{tikzpicture}
		\fill[gray](3,3) circle (1);
		\draw[help lines, color=red!5, dashed] (-0.5,-0.5) grid (4,4);
		\path[fill=red](6, 3)--(3,3)--(3,5)--(6, 5);
		\path[fill=green](6, 2)--(3,2)--(3,3)--(6, 3);
		\path[fill=yellow](2, 5)--(2,3)--(3,3)--(3, 5);
		\draw[dotted] (2 ,-0.5) --(2, 1.5)  node[anchor=north]{$\frac{q}{p-1}= 1$} -- (2 ,4) -- (2 ,5);
		\draw[dotted] (3,-0.5) --(3, 1.5) -- (3,4)  node[anchor=north]{$\frac{q}{p-1}= \frac{n+1}{n-1}$} -- (3,5);
		\draw[dotted] (-0.5, 2 )--(2, 2)-- (4, 2) node[anchor=west]{$\frac{r}{p-1}=1$}  -- (6, 2);
		\draw[dotted] (-0.5,3) --(1.9,3)  node[anchor=east]{$\frac{r}{p-1}= \frac{n+1}{n-1}$} -- (4,3) -- (4.5,3);
		\draw[->,ultra thick] (-0.5,0)--(6,0) node[right]{$\frac{q}{p-1}$};
		\draw[->,ultra thick] (0,-0.5)--(0,5) node[above]{$\frac{r}{p-1}$};
	\end{tikzpicture}			
	\begin{tikzpicture}
		\path[fill=green!0](2, 2)--(3,2)--(3,2.5)--(2,2.5);
		\path[fill=red](2, 3)--(3,3)--(3,3.2) node[anchor=west]{$f=|\nabla u|^2<y_1$}--(3,3.5)--(2,3.5);				
		\path[fill=green](2, 4)--(3,4)--(3,4.2) node[anchor=west]{$f=|\nabla u|^2<y_2$}--(3,4.5)--(2,4.5);
		\path[fill=yellow](2, 5)--(3,5)--(3,5.2) node[anchor=west]{$f=|\nabla u|^2<y_3$}--(3,5.5)--(2,5.5);
		\path[fill = gray](2, 6)--(3,6)--(3,6.2) node[anchor=west]{$f=|\nabla u|^2<y_4$}--(3,6.5)--(2,6.5);
	\end{tikzpicture}
	\caption{When $b<0$ and $c<0$, the region of $(\frac{q}{p-1}, \frac{r}{p-1})$ in $W_1, W_2, W_3, W_4$.}
	\label{fig6}
\end{figure}
\subsection{Some further comments and open problems}\

Although the arguments presented in the above subsection give the upper bounds of constants $c(n, p, q, r)$ for different cases by obvious
formula, we can not confirm whether these bounds are sharp or not.
	
In the case the coefficients $b$ and $c$ associated with equation \eqref{equ0} satisfy $bc<0$, i.e., $b$ and $c$ are of different signs, it seems that the method employed here is not valid. However, from the results of Theorem E in \cite{MR4150912} we know that some Liouville properties hold true for positive solutions to \eqref{equ0} on $\mathbb{R}^n$ with $n\geq 3$ if $|b|$ is small enough, $c=1$, $p=2$, $1<r<\frac{n+2}{n-2}$ and $q=\frac{2r}{r+1}$ (also see \cite{filippucci2022priori}).

Naturally, one wants to know whether or not the Cheng-Yau's logarithmic gradient estimate or Liouville property still holds true for positive solutions to \eqref{equ0} with $bc<0$ on a complete Riemannian manifold?

In fact, Bidaut-V\'{e}ron, Garcia-Huidobro and V\'{e}ron \cite{Veron2021} 
studied recently the properties of nonnegative functions satisfying (E) $-\Delta u+m|\nabla u|^q-u^r=0$
in a domain of $\mathbb{R}^n$ when $r > 1$, $m > 0$ and $1 < q < r$. 
They concentrate their analysis on the solutions of (E) with an isolated singularity, or in an exterior domain, or in the whole space. 

The existence of such solutions and their behaviours depend strongly on the values of the exponents $r$ and $q$.

Very recently, Bidaut-V\'{e}ron and V\'{e}ron in \cite{Veron2023} also studied the local properties of positive solutions of the equation $\Delta u - m|\nabla u|^q + u^r =0$ in a punctured domain $\Omega\setminus\{0\}$ of $\mathbb{R}^n$ or in an exterior domain $\mathbb{R}^n\setminus B_R(0)$ in the range $\min\{r, q\} >1$ and $m > 0$. They proved a series of a priori estimates depending $r$ and $q$.

But, it seems that the methods adopted in \cite{Veron2021, Veron2023} are not valid for the case $(M,g)$ is a Riemannian manifold.
\medskip
	
\noindent {\it\bf{Acknowledgements}}: The author Y. Wang is supported by National key Research and Development projects of China (Grant No. 2020YFA0712500).
\medskip
	
\bibliographystyle{plain}

\end{document}